\documentclass[11pt,a4paper]{amsart}
\usepackage[utf8]{inputenc}
\usepackage[T1]{fontenc}

\pdfoutput=1

\title{Relative accessibility for graphs}
\author{Joseph Paul MacManus}
\date{12th May, 2026}

\address{School of Mathematics,  University of Bristol, Bristol, BS8 1UG, UK, and the Heilbronn Institute for Mathematical Research, Bristol, UK.}
\email{joseph.macmanus@bristol.ac.uk}

\usepackage{amsfonts}
\usepackage{amsmath}
\usepackage{amsthm}
\usepackage{amssymb}
\usepackage{stmaryrd}
\usepackage{mathrsfs}

\usepackage{graphicx}
\usepackage[dvipsnames]{xcolor}
\usepackage{tikz}
\usepackage{quiver}
\usepackage[colorlinks]{hyperref} 
\hypersetup{
    linkcolor=red,
    citecolor=blue,
}
\usepackage{fancyref}
\usepackage{todonotes}
\usepackage{mathabx}

\usepackage{thmtools, thm-restate}

\usepackage[abbrev,alphabetic]{amsrefs}

\DeclareMathOperator{\dHaus}{Haus}

\DeclareMathOperator{\Aut}{Aut}

\newcommand{\Z}{\mathbf{Z}}
\newcommand{\N}{\mathbf{N}}

\newcommand{\into}{\hookrightarrow}

\newcommand{\pres}[2]{\langle #1 \ ; \ #2 \rangle}

\newtheorem{theorem}{Theorem}[section]

\newtheorem*{theorem*}{Theorem}

\newtheorem{proposition}[theorem]{Proposition}
\newtheorem{lemma}[theorem]{Lemma}
\newtheorem{corollary}[theorem]{Corollary}
\newtheorem{claim}[theorem]{Claim}

\theoremstyle{definition}
\newtheorem{definition}[theorem]{Definition}
\newtheorem{example}[theorem]{Example}
\newtheorem{remark}[theorem]{Remark}

\newtheorem*{definition*}{Definition}

\newcommand{\per}{\mathcal{H}}
\newcommand{\br}{{\mathscr{B}}}

\newcommand{\cyc}{\mathscr{C}}

\makeatletter
\newcommand{\myitem}[1]{%
\item[#1]\protected@edef\@currentlabel{#1}%
}
\makeatother

\begin{document}

\begin{abstract}
    We relativise the Thomassen--Woess definition of accessibility in graphs, defining what it means for a graph to be \emph{accessible relative to a peripheral system}. In the case of locally finite, quasi-transitive graphs, we characterise relative accessibility in terms of a certain subring of the Boolean ring of the graph, and apply this to show that our definition agrees with the usual algebraic notion of relative accessibility in finitely generated groups. This implies, in particular, that relative accessibility is a quasi-isometry invariant amongst finitely generated groups, when the quasi-isometry coarsely preserves the left cosets of the peripheral subgroups.
    We also deduce a relative variant of Hamann's accessibility theorem on graphs with finitely generated cycle spaces. 

\end{abstract}

\maketitle




\section{Introduction}

The definition of an \emph{accessible group} is due to Wall \cite{wall1971pairs}. In the language of Bass--Serre theory, a finitely generated group $G$ is \emph{accessible} if it splits as a finite graph of groups with finite edge groups and vertex groups with at most one end. Wall conjectured that every finitely generated group is accessible, suggesting a natural extension of the Grushko--Neumann theorem. 
Much progress was made on this conjecture, most notably by Linnell \cite{linnell1983accessibility} and Dunwoody \cite{dunwoody1985accessibility}, who verified Wall's conjecture for groups with bounded-order finite subgroups and (almost) finitely presented groups, respectively. Eventually, a counterexample was discovered by Dunwoody in the 1993 \cite{dunwoody1993inaccessible}. 
That same year, the idea of accessibility was re-cast in an new, geometric light when Thomassen and Woess presented a purely graph-theoretical definition of accessibility \cite{thomassen1993vertex}. Their definition runs as follows.

\begin{definition}[Accessibility for graphs]\label{def:TW}
    Let $\Gamma$ be a connected, locally finite graph. We say that $\Gamma$ is (\emph{edge-})\emph{accessible} if there exists $k \in \N$ such that for every pair of distinct ends $\omega_1$, $\omega_2$ of $\Gamma$, there exists an edge-cut consisting of at most $k$ edges which separates $\omega_1$ and $\omega_2$. 
\end{definition}

Thomassen and Woess go on to prove that a finitely generated group is accessible if and only if some (and hence any) of its Cayley graphs are accessible graphs. This geometric characterisation has the immediate consequence that accessibility is a quasi-isometry invariant. Their work opened the door to questions about how the classical study of infinitely-ended groups can be applied more generally to quasi-transitive graphs through purely combinatorial means; see e.g. \cites{hamann2022stallings, moller1996accessibility, esperet2024structure, macmanus2023accessibility}.

Within geometric group theory, there is an established algebraic notion of `relative' accessibility, described in e.g. \cite{jsj}. Briefly, we are given a system of \emph{peripheral subgroups} along with our group, and we ask for a `maximal' finite splitting of the group where each peripheral subgroup fixes a vertex of the Bass--Serre tree; see Section~\ref{sec:groups} for a precise definition.
Studying properties of groups `relative to a given peripheral system' is a very successful idea, leading to many deep insights. The purpose of this paper is to advertise the following relativisation of Definition~\ref{def:TW}. 

\begin{definition}[Relative accessibility for graphs]\label{def:rel-acc-intro}
    Let $\Gamma$ be a connected, locally finite graph. A \emph{peripheral system} is a collection $\per$ of subsets of $V(\Gamma)$.
    We say that a cut $b \in \br(\Gamma)$ is \emph{$\per$-elliptic} if for every $H \in \per$,  either $b \cap H$ or $b^\ast \cap H$ is finite. 
    
    We then say that $\Gamma$ is \emph{accessible relative to $\per$} if there exists $k \geq 1$ such that for every pair of distinct ends $\omega_1$, $\omega_2$ of $\Gamma$, if there exists an $\per$-elliptic cut $b$ separating $\omega_1$ and $\omega_2$ then there exists a (possibly different) $\per$-elliptic cut $b'$ separating the $\omega_i$ such that $|\delta b'| \leq k$.
\end{definition}

 It can be verified (see Proposition~\ref{prop:subring}) that the set of $\per$-elliptic cuts forms a subring of the Boolean ring $\br(\Gamma)$, which we denote by $\br_\per(\Gamma)$. As is the case with absolute accessibility, we show that relative accessibility is characterised by the generation properties of this ring.

\begin{restatable}{alphtheorem}{fgchar}\label{thm:fg-char}
    Let $\Gamma$ be a connected, locally finite, quasi-transitive $G$-graph and $\per$ a thin, $G$-invariant peripheral system. The following are equivalent:
    \begin{enumerate}
        \item $\Gamma$ is accessible relative to $\per$.

        \item $\br_\per(\Gamma)$ is finitely generated as a $G$-module.
    \end{enumerate}
\end{restatable}

In fact, the finite generating set we obtain is nested. Here, \emph{thin} means that every vertex is contained in finitely many peripherals; see Definition~\ref{def:thin}. 
The proof of Theorem~\ref{thm:fg-char} passes through the `cone-off' of $\Gamma$ over $\per$, denoted $\widehat \Gamma_\per$; see Section~\ref{sec:cone-off}. The key observation is that---after possibly replacing $\per$ with something slightly more well-behaved---there is a canonical isomorphism
$$
\br_\per(\Gamma) \cong \br(\widehat \Gamma_\per).
$$
This allows access to the full strength of the Dicks--Dunwoody theory of the Boolean ring of cuts. Even so, care has to be taken in order to take advantage of this map, as na\"ively there is no control on how the cardinalities of coboundaries change when passing through the isomorphism. Regaining this control is therefore the crux of this problem, which is achieved by substituting $\per$ with yet another, even better-behaved peripheral system. Exactly what we mean by `better-behaved' is not immediately obvious here, and pinning this down constitutes much of the hard work done towards proving Theorem~\ref{thm:fg-char}.

Using Theorem~\ref{thm:fg-char}, we are able to prove that our definition of relative accessibility coincides with the standard algebraic definition for finitely generated groups. This motivates that our definition is, in some sense, the `correct' one.

\begin{restatable}{alphtheorem}{algaccchar}\label{thm:alg-vs-graph-relacc}
    Let $G$ be a finitely generated group and $\mathcal P$ a finite collection of subgroups. Let $\Gamma$ be some locally finite Cayley graph of $G$. Let 
    $$
    \per = \{gH : H \in \mathcal P, \ g \in G\}.
    $$
    Then $G$ is accessible relative to $\mathcal P$ if and only if $\Gamma$ is accessible relative to $\per$. 
\end{restatable}

Our proof of Theorem~\ref{thm:alg-vs-graph-relacc} is algebraic and combinatorial in nature, relating actions on trees with finite edge stabilisers to homomorphisms between Boolean rings.
We remark that the above theorem has the following immediate geometric consequence. 

\begin{restatable}{alphcor}{qiinv}\label{cor:qi-invariance}
    Let $G_1$, $G_2$ be finitely generated groups, equipped with fixed word metrics. Let $\mathcal P_i$ a finite collection of subgroups of $G_i$. Suppose that there exists a quasi-isometry $f : G_1 \to G_2$ 
    such that the following holds:
    \begin{enumerate}
        \item\label{itm:qi-1} For all $g \in G_1$, $H \in \mathcal P_1$, there exists $h \in G_{2}$, $K \in \mathcal P_{2}$ such that
    $$
    \dHaus(f(gH), hK)<\infty.
    $$ 

        \item\label{itm:qi-2} For all $h \in G_2$, $K \in \mathcal P_2$, there exists  $g \in G_1$, $H \in \mathcal P_1$ such that
    $$
    \dHaus(f(gH), hK)<\infty.
    $$ 
    \end{enumerate}
    Then, $G_1$ is accessible relative to $\mathcal P_1$ if and only if $G_2$ is accessible relative to $\mathcal P_2$. 
\end{restatable}

Note that we do not require the Hausdorff distances in the above statement to be uniformly bounded. 

As mentioned above, a cornerstone result in the theory of accessible groups is Dunwoody's accessibility theorem for (almost) finitely presented groups \cite{dunwoody1985accessibility}. The relative variant of this appears in \cite[Prop.~2.24]{jsj}. Hamann gave a combinatorial generalisation of Dunwoody's theorem, in terms of the cycle space of a quasi-transitive graph \cite{hamann2018accessibility}. Using our machinery, we are able to deduce the following relative variant of Hamann's theorem, by considering  the cycle space of the cone-off of our graph over its peripheral system.

\begin{restatable}{alphtheorem}{relcycs}\label{thm:rel-cyc-accessibility}
    Let $G$ be a connected, locally finite, quasi-transitive $G$-graph. Let $\per$ be a thin, $G$-invariant peripheral system. Suppose that ${\cyc(\widehat \Gamma_\per)}$ is topologically $G$-finitely generated. Then $\Gamma$ is accessible relative to $\per$. 
\end{restatable}

This recovers and generalises the fact that a finitely generated group $G$ which is finitely presented relative to some collection of peripheral subgroups $\mathcal P$ is accessible relative to $\mathcal P$. We remark that $\mathcal P$ is necessarily finite under this hypothesis \cite[Thm.~1.1]{osin2004relhyp}.

Note the condition of \emph{topological} $G$-finite generation in the above theorem. This is strictly weaker than asking ${\cyc(\widehat\Gamma_\per)}$ simply admit a $G$-finite generating set. The exact definition of topological generation and its distinction from the usual `algebraic' generation is discussed in Section~\ref{sec:top-gen} below. We remark that Hamann's theorem also holds verbatim when only assuming the weaker hypothesis of topological generation, with only a minor modification to the proof. This modification is explained in Appendix~\ref{app:hamann}. 

We are interested in this weakening as it is often more directly relevant to certain applications, most notably when dealing with planar graphs. Indeed, this has been observed several times in the literature \cites{dunwoody2007planar,groves1991remarks}. Perhaps most relevant for us, Theorem~\ref{thm:rel-cyc-accessibility} is applied by the present author with this weaker hypothesis in \cite{macmanus2023accessibility}, where it is used to prove that a locally finite, quasi-transitive graph which is quasi-isometric to a planar graph is accessible. Indeed, some of the techniques of the current paper originally appeared in a rudimentary form in an earlier draft of \cite{macmanus2023accessibility}. The purpose of this paper is to further explore and greatly elaborate on these techniques, as they appear to be independently deserving of study. 

\subsection*{Acknowledgements}

This work was supported by the Additional Funding Programme for Mathematical Sciences, delivered by EPSRC (EP/V521917/1) and the Heilbronn Institute for Mathematical Research.


\section{Preliminaries}

\subsection{Basic conventions}

Given a set $S$, we denote by $\mathscr P(S)$ its power set, and write $\mathscr F(S)$ for the set of finite or cofinite\footnote{By \textit{cofinite}, we mean subsets with finite complements.} subsets of $S$.

Given a graph $\Gamma$, we will always denote its vertex set by $V(\Gamma)$ and its edge set by $E(\Gamma)$. All graphs considered in this paper are simple, unweighted, and undirected. Given a group $G$, a graph $\Gamma$ is called a \emph{$G$-graph} if it is equipped with an $G$-action by automorphisms. If this action has finitely many orbits of vertices, we say that $\Gamma$ is a \emph{quasi-transitive $G$-graph}.  Given $H \subset V(\Gamma)$, write $\Gamma[H]$ for the subgraph induced by $H$; that is, $H$ together with all edges in $\Gamma$ whose endpoints both lie in $H$. 
Denote the length of the shortest path between two vertices $u,v \in V(\Gamma)$ by $d(u,v)$. Similarly, if $u \in V(\Gamma)$ and $H \subset V(\Gamma)$, write $d(u,H)$ for the length of the shortest path with one endpoint equal to $u$, and the other lying in $H$.
Given $r \geq 0$, write 
$$
B(H,r) = \{x \in V(\Gamma) : d(x, H) \leq r \}. 
$$
When discussing a $G$-action on some structure, we may denote by $G_x$ the stabiliser of some point/subset $x$. If there is any risk of ambiguity (e.g. between pointwise/setwise stabilisation), we will explicitly describe and clarify the stabiliser we are considering.

\subsection{Ends and limit sets} We now recall the basic theory of ends and limit sets in graphs. We will only recall this theory so far as we need it. To avoid any unnecessary pathologies, we only work with locally finite graphs when discussing ends. 

Let $\Gamma$ be a connected, locally finite graph. Let $P$, $Q$ be infinite, simple rays. We say that $P$ and $Q$ are \emph{end-equivalent} if for all finite subsets $K \subset V(\Gamma)$, we have that some tails $P$ and $Q$ are contained in the same component of $\Gamma \setminus K$. This is an equivalence relation on the set of simple rays. Equivalence classes are called \emph{ends} and we denote by $\Omega(\Gamma)$ the set of ends. There is a natural way to topologise $\Gamma \sqcup \Omega(\Gamma)$ as a compact, metrisable space, called the \emph{Freudenthal compactification of $\Gamma$}. We denote this compactification by $|\Gamma|$. The inclusion $\Gamma \into |\Gamma|$ is a topological embedding, and $\Omega(\Gamma)$ is a closed subset of $|\Gamma|$, and thus is naturally compact.  See \cite[\S21]{woess2000random} for details.

Given a subset $H \subset V(\Gamma)$, we denote by $\Lambda(H)$ its \emph{limit set} in $\Omega(\Gamma)$. That is, we define
$
\Lambda(H) := \overline H \cap \Omega(\Gamma)
$, 
where $\overline H$ denotes the closure of $H$ in $|\Gamma|$. If $G$ is a group acting on $\Gamma$, we write $\Lambda(G) := \Lambda(Gx)$, where $x \in V(\Gamma)$ is some choice of vertex. It is easy to see that this does not depend on the choice of $x$. In particular, we may even take finitely many vertices, and then it is a short exercise in point-set topology to verify that the equality
$
\Lambda(G \cdot \{x_1, \ldots, x_k\}) = \Lambda(G)
$
will hold.

We record the following classical facts about limit sets, which can be found in most standard texts on the subject; we will appeal to Woess' monograph \cite{woess2000random}.\footnote{The results we cite are often phrased in terms of \emph{contractive $G$-compactifications}; the Freudenthal compactification of a locally finite $G$-graph is an example of such a compactification \cite[Prop.~21.5]{woess2000random}.} 

\begin{proposition}[{\cite[Prop.~20.9(1)]{woess2000random}}]\label{prop:limsets-classification}
    Let $\Gamma$ be a connected, locally finite $G$-graph, then
    $$
    |\Lambda(G)| \in \{0,1,2,\infty\}.
    $$
\end{proposition}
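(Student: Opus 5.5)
We aim to show that if $\Lambda(G)$ has at least three points then it is infinite. We may assume $G$ acts with infinite orbits, since if some (hence every) orbit $Gx$ is finite then $\Lambda(G)=\emptyset$. Write $\Lambda = \Lambda(G)$. This is a closed, $G$-invariant subset of $\Omega(\Gamma)$. The plan is to exploit the contractive nature of the action on the Freudenthal compactification, via a convergence-type argument: if $g_n x \to \xi$ in $|\Gamma|$ for a sequence of distinct $g_n$, then passing to a subsequence there is an end $\eta$ with $g_n y \to \xi$ uniformly for $y$ on compact subsets of $|\Gamma| \setminus \{\eta\}$.

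First we establish this convergence property directly from local finiteness. Fix a base vertex $x$ and a sequence $g_n$ with $g_n x \to \xi$ and $g_n^{-1}x \to \eta$; both limits must be ends, since the orbit points leave every finite set. For a finite set $K$ of vertices and a component $C$ of $\Gamma\setminus K$ containing the tail of $g_nx$, consider $g_n^{-1}K$. These sets all lie within bounded distance of $g_n^{-1}x$, so they eventually lie inside any prescribed neighbourhood of $\eta$. Consequently, any point $y$ outside a neighbourhood of $\eta$ can be joined to $x$ by a path avoiding $g_n^{-1}K$. Applying $g_n$, the image $g_ny$ lies in the same component of $\Gamma\setminus K$ as $g_nx$, namely $C$. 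This step is the main technical point: one has to be careful to choose the path from $x$ to $y$ outside of the shrinking neighbourhood, which is exactly where connectedness and local finiteness are used.

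With the convergence property in hand, suppose $\Lambda$ contains three distinct points $\xi_1,\xi_2,\xi_3$ and is finite. Since $\Lambda$ is finite and $G$-invariant, a finite-index subgroup $G'$ fixes $\Lambda$ pointwise, and $\Lambda(G')=\Lambda$. Choose $g_n\in G'$ with $g_nx\to\xi_1$, and take the associated repelling end $\eta$. At least two of the $\xi_i$ differ from $\eta$. Each of these is fixed by $g_n$, yet the convergence property forces $g_n\xi_i\to\xi_1$. So both must equal $\xi_1$, which is a contradiction. Hence a limit set with at least three points is infinite, and $|\Lambda(G)|\in\{0,1,2,\infty\}$. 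The main obstacle, as indicated above, is the convergence step; the remaining counting is routine.
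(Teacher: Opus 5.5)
Your proof is correct and is essentially the standard argument behind the result the paper cites from Woess; the paper itself gives no proof. That argument shows the Freudenthal compactification is contractive, passes to the finite-index subgroup fixing a finite limit set pointwise, and plays the attracting end $\xi_1$ against the fixed ends $\xi_i \neq \eta$. The step you flagged does need care, and it is settled by taking the finite set $F$ that separates $\eta$ from the relevant end to be connected and to contain $x$: then $F$ together with all components of $\Gamma \setminus F$ other than the one containing $\eta$ is connected, and it contains a ray from $x$ that avoids $g_n^{-1}K \subset B(g_n^{-1}x, R)$ for large $n$.
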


Given $g \in G$, we may write $\Lambda(\langle g \rangle) =: \Lambda(g)$ for brevity. We may then classify any  $g \in G$ into one of three categories:
\begin{itemize}
    \item If $|\Lambda(g)| = 0$, then $g$ acts with bounded orbits on $\Gamma$ and is called \emph{elliptic}. 

    \item If |$\Lambda(g)| = 1$, then $g$ is called \emph{parabolic}. 

    \item If $|\Lambda( g)| = 2$, then $g$ is said to be \emph{loxodromic}.
\end{itemize}

We will only really be interested in loxodromics. The dynamics of their actions can be understood as follows.

\begin{proposition}[{\cite[Prop.~20.7(1)]{woess2000random}}]\label{prop:lox-action}
    Let $\Gamma$ be a connected, locally finite $G$-graph and $g \in G$ a loxodromic element, with say $\Lambda(g) = \{\lambda_+, \lambda_-\}$, where $g^{\pm n}x \to \lambda_{\pm}$ as $n \to \infty$. Then for all compact $W \subset \Omega(\Gamma) \setminus \{\lambda_\mp\}$, and all open neighbourhoods $U$ of $\lambda_\pm$, there exists $M > 0$ such that for all $n > M$ we have that $g^{\pm n}(W) \subset U$. 
\end{proposition}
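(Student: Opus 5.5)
We want to prove the north--south dynamics of Proposition~\ref{prop:lox-action} for a loxodromic $g$ acting on the Freudenthal compactification $|\Gamma|$. By symmetry (replace $g$ with $g^{-1}$) it suffices to treat the positive case: if $W \subset \Omega(\Gamma)\setminus\{\lambda_-\}$ is compact and $U \ni \lambda_+$ is open, then $g^n W \subset U$ for all large $n$. The plan is to build a $\langle g\rangle$-equivariant separating structure from finite cuts, using local finiteness and the fact that $\langle g \rangle$ acts with unbounded orbits.

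\textbf{Step 1: a separating cut with a ping-pong property.} Since $\lambda_+ \neq \lambda_-$, there is a finite vertex set $K$ such that $\lambda_+$ and $\lambda_-$ lie in different components of $\Gamma\setminus K$. Let $C_+$ be the component containing $\lambda_+$ and let $C_- = V(\Gamma)\setminus (C_+\cup K)$. Since $g^n x \to \lambda_+$, for large $m$ the finite set $g^m K$ lies in $C_+$ and far from $K$. I would then show that, after passing to a power $h=g^m$, the cut is nested in the sense that $h(C_+\cup K) \subsetneq C_+$. The relevant component of $\Gamma\setminus hK$ containing $K$ must contain $\lambda_-$, because $h$ fixes $\lambda_-$ and $hC_-$ has $\lambda_-$ in its closure. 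Hence $hC_-$ together with $hK$ contains everything outside $hC_+$, and $hC_+$ is a component of $\Gamma\setminus hK$ not meeting $K$.

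\textbf{Step 2: shrinking.} Iterating gives a strictly decreasing chain $C_+ \supset hC_+ \supset h^2C_+ \supset \cdots$ with $d(K, h^kK)\to\infty$. I claim that the closures $\overline{h^k C_+}$ form a neighbourhood base at $\lambda_+$ in $|\Gamma|$. An end lying in all of them would be separated from $\lambda_+$ by some finite set $F$. But the $h^kK$ eventually leave any ball, and the boundaries are connected through the ray $h^k x$ converging to $\lambda_+$. Making this precise is the main obstacle: it requires arguing that the intersection $\bigcap_k \overline{h^kC_+}$ is a single end, using that each $h^kC_+\setminus h^{k+1}C_+$ is a $\langle h\rangle$-translate of a fixed set and that $\lambda_+$ is the unique accumulation point of the orbit.

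\textbf{Step 3: conclusion.} Symmetrically, the closures $\overline{h^{-k}C_-}$ form a neighbourhood base at $\lambda_-$. Since $W$ is compact and avoids $\lambda_-$, it avoids some $\overline{h^{-k}C_-}$. Hence $W\subset \overline{h^{-k}(C_+\cup K)}$, which gives $h^{k+j}W \subset \overline{h^{j}C_+}\subset U$ for large $j$. To pass from $h=g^m$ back to $g$, apply the same argument to the finitely many compact sets $g^rW$ for $0\le r<m$. Each of these still avoids $\lambda_-$, because $g$ fixes $\lambda_-$.
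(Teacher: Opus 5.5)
The paper does not prove this statement; it quotes Woess, where it is stated for contractive $G$-compactifications (the end compactification being one). Your ping-pong route is a reasonable alternative, but Step~2 is a genuine gap. You flag it yourself as the main obstacle, and it is asserted rather than proved.

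Your sketch misses the actual difficulty. Knowing that the $h^kK$ leave every ball and track the ray $h^kx$ tells you where the boundaries $h^kK$ are. It does not tell you on which side of $h^kK$ a fixed finite set $F$ lies. So nothing you say prevents $F$, and an end beyond it, from lying in every $h^kC_+$. The remark about translates of $C_+\setminus hC_+$ does not settle this either, since these pieces may be infinite and carry ends of their own, which the single orbit $h^kx$ does not control.

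What has to be shown is that no vertex lies in $Z=\bigcap_{k\ge 0}h^kC_+$. This is where the hypothesis $g^{-n}x\to\lambda_-$, which your argument never uses, comes in:
a vertex $v\in Z$ has $h^{-k}v\in C_+$ for all $k\ge 0$. But $d(h^{-k}v,h^{-k}x)=d(v,x)$ forces $h^{-k}v\to\lambda_-$. That is impossible, because the component of $\Gamma\setminus K$ containing $\lambda_-$, together with its ends, is a neighbourhood of $\lambda_-$ containing no vertex of $C_+$.

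Hence every finite $F$ misses $h^kC_+$ for large $k$. Since $h^kC_+$ is connected with $\lambda_+$ in its closure, it then lies in the component of $\Gamma\setminus F$ containing $\lambda_+$, which is exactly the neighbourhood-base claim. The symmetric claim at $\lambda_-$ used in Step~3 is proved the same way with $h^{-k}(C_-\cup K)$: no vertex survives because $h^kv\to\lambda_+$. The last step, however, needs $C_-\cup K$ connected, so choose $K$ connected from the outset. Step~1 already tacitly assumes this when it refers to the component of $\Gamma\setminus hK$ containing $K$.

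Two smaller remarks. First, in Step~1 the reason $K$ lies in the $\lambda_-$-component of $\Gamma\setminus hK$ is that $K$ together with the $\lambda_-$-component of $\Gamma\setminus K$ is connected and disjoint from $hK\subset C_+$. The fact that $h$ fixes $\lambda_-$ only tells you that this component is the $h$-image of the $\lambda_-$-component of $\Gamma\setminus K$. Step~3 and the passage from $h=g^m$ back to $g$ are fine.

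Second, the nested chain and the power $h$ are not needed. Choose $K\ni x$ finite and connected so that $W$ contains no end of the $\lambda_-$-component $D$ of $\Gamma\setminus K$, and $F$ finite so that all ends of the $\lambda_+$-component $C$ of $\Gamma\setminus F$ lie in $U$. For large $n$ you have $g^nK\subset C$ and $g^{-n}F\subset D$. So the connected set $g^n(V(\Gamma)\setminus D)$ misses $F$ and meets $C$, hence lies in $C$, giving $g^nW\subset U$ directly. This is essentially the contractivity property specialised to the sequence $g^n$, and it rests on the same input, $g^{-n}x\to\lambda_-$, that your Step~2 is missing.
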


There is plenty more to be said about the dynamics of the action of $G$ on $\Omega(\Gamma)$. However, all we really need is the following two results.

\begin{proposition}[{\cite[Prop.~20.9(2)]{woess2000random}}]\label{prop:independent-lox}
    Let $\Gamma$ be a connected, locally finite $G$-graph, such that $|\Lambda(G)| > 1$. Then $G$ contains a loxodromic element. 
    If $|\Lambda(G)| = \infty$, then there exists loxodromics $g, h \in G$ such that $\Lambda(g) \neq \Lambda(h)$. In particular, $|\Lambda(\langle g,h\rangle)| = \infty$.
\end{proposition}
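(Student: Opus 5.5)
This is cited from Woess, but I'll sketch how the classical argument goes, using only the Freudenthal compactification and Propositions~\ref{prop:limsets-classification} and~\ref{prop:lox-action}.

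\emph{First assertion.} I would argue by contradiction. Suppose $|\Lambda(G)|>1$ but every element of $G$ is elliptic or parabolic. Choose two distinct ends $\omega_1,\omega_2\in\Lambda(G)$. Pick a finite set $K\subset V(\Gamma)$ such that $\omega_1,\omega_2$ lie in different components $C_1,C_2$ of $\Gamma\setminus K$; shrinking $K$ if necessary, we may take $C_1,C_2$ infinite. Since each $\omega_i$ is a limit point of the orbit $Gx$, there are $g_1,g_2\in G$ with $g_iK\subset C_i$ and $g_iK$ far from $K$.

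The standard ping-pong trick applies to cuts. Let $A_i$ be the component of $\Gamma\setminus g_iK$ containing $K$. Then either $g_i$ maps the ``far side'' $D_i=\Gamma\setminus(A_i\cup g_iK)$ into itself properly, or $g_i^{-1}$ does. If neither happens, replace $g_i$ by $g_2^{-1}g_1$ or a similar combination. Once some $h$ satisfies $hD\subsetneq D$ with $D\cup\delta D$ disjoint from $hD$, the orbit $h^n x$ converges to one end and $h^{-n}x$ to another, so $h$ is loxodromic. This is a contradiction.

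I expect the main obstacle to be this nesting step. One needs a cut $b$ with finite coboundary and a group element $h$ with $hb\subsetneq b$. I would obtain it by taking $b$ of minimal coboundary separating $\omega_1$ and $\omega_2$, using the finiteness of such minimal cuts through a fixed edge.

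\emph{Second assertion.} Now assume $|\Lambda(G)|=\infty$ and let $g$ be a loxodromic with $\Lambda(g)=\{\lambda_+,\lambda_-\}$. Pick $\mu\in\Lambda(G)\setminus\Lambda(g)$. There is some $f\in G$ with $fx$ near $\mu$, and hence $f\Lambda(g)\neq\Lambda(g)$. If this failed for every such $f$, then $\Lambda(g)$ would be $G$-invariant. Combining this with the dynamics in Proposition~\ref{prop:lox-action} would force $\Lambda(G)\subset\Lambda(g)$, a contradiction.

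Set $h=fgf^{-1}$. It is loxodromic with $\Lambda(h)=f\Lambda(g)\neq\Lambda(g)$. If the two limit sets share a point, I would replace $h$ by $g^nhg^{-n}$ for large $n$ and use Proposition~\ref{prop:lox-action} to move the non-shared point, arranging that the sets are disjoint or at least distinct.

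Finally, $\langle g,h\rangle$ contains the loxodromics $g^nhg^{-n}$, whose limit sets $g^n\Lambda(h)$ are infinitely many distinct pairs. This gives $|\Lambda(\langle g,h\rangle)|>2$, and hence $|\Lambda(\langle g,h\rangle)|=\infty$ by Proposition~\ref{prop:limsets-classification}.
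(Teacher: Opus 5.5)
Your proposal has genuine gaps, in exactly the two places where the underlying input is needed. The paper gives no proof of its own; it cites Woess, whose results are stated for contractive compactifications, and that contractivity is what is missing here.

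\emph{Second assertion.} Everything reduces to finding $f\in G$ with $f\Lambda(g)\neq\Lambda(g)$. Once you have it, $h=fgf^{-1}$ already finishes, since $\Lambda(g)\cup\Lambda(h)\subseteq\Lambda(\langle g,h\rangle)$ has at least three points and Proposition~\ref{prop:limsets-classification} applies. So your disjointness step is unnecessary. It would also fail as described: a point shared by $\Lambda(g)$ and $\Lambda(h)$ is fixed by $g$, so conjugating by powers of $g$ cannot move it.

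Your justification of the key step does not hold. ``$fx$ near $\mu$, hence $f\Lambda(g)\neq\Lambda(g)$'' is a non sequitur. Proposition~\ref{prop:lox-action} only describes the dynamics of the powers of $g$, so it cannot show that $G$-invariance of $\Lambda(g)$ forces $\Lambda(G)\subseteq\Lambda(g)$. For that you must control an \emph{arbitrary} sequence $f_n$ with $f_nx\to\mu$.

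The missing lemma is the convergence property: if $f_nx\to\mu$ and $f_n^{-1}x\to\nu$, then $f_n\to\mu$ uniformly on compact subsets of $|\Gamma|\setminus\{\nu\}$. To see this, take a finite connected $K\ni x$. The complement of the component of $\Gamma\setminus K$ containing $\nu$ is connected and eventually misses $f_n^{-1}K$. Its $f_n$-image is therefore connected, avoids $K$ and contains $f_nx$, so it eventually lies in the component containing $\mu$. Now pass to a subsequence with $f_n^{-1}x\to\nu$ and apply this to some $\lambda\in\Lambda(g)\setminus\{\nu\}$. This gives $f_n\lambda\to\mu\notin\Lambda(g)$, so $f_n\Lambda(g)\neq\Lambda(g)$ for large $n$.

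\emph{First assertion.} This part is not proved either: you flag the nesting step as the main obstacle and only gesture at it.
\begin{enumerate}
\item The dichotomy ``either $g_i$ or $g_i^{-1}$ maps $D_i$ properly into itself'' is unjustified. Nothing controls where $g_i^{-1}K$ lies, and $g_ix$, $g_i^{-1}x$ may approach the same end.
\item The condition ``$hD\subsetneq D$ with $hD$ disjoint from $D\cup\delta D$'' is self-contradictory as written.
\item Finiteness of minimal cuts through an edge is not the relevant property; \emph{tightness} is.
\end{enumerate}

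Your idea can be completed as follows.
\begin{enumerate}
\item Take a finite connected $K$ separating $\omega_1,\omega_2\in\Lambda(G)$, and let $b$ be the component of $\Gamma\setminus K$ containing $\omega_1$. Then $\Gamma[b]$ and $\Gamma[b^\ast]$ are both connected. Let $F$ be the set of endpoints of $\delta b$.
\item Since $\omega_1,\omega_2\in\Lambda(Gx)$, there are $g,h\in G$ with $gF\subset b\setminus F$ and $hF\subset b^\ast\setminus F$.
\item Since $b^\ast$ is connected and misses $gF$, either $gb\subsetneq b$ or $gb^\ast\subsetneq b$. Likewise, either $hb^\ast\subsetneq b^\ast$ or $hb\subsetneq b^\ast$.
\item Take $f=g$, or $f=h^{-1}$, or, if both ``flip'', $f=gh$. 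In every case $fb\subsetneq b$. Your $g_2^{-1}g_1$ also works in the double-flip case, mapping $b^\ast$ strictly into itself, but only once tightness is in place.
\item For $y\in b\setminus fb$, the points $f^ny$ ($n\in\Z$) are pairwise distinct. They lie in $b$ for $n\ge 0$ and in $b^\ast$ for $n<0$. Hence $\Lambda(f)$ meets both $\Lambda(b)$ and $\Lambda(b^\ast)$, and $f$ is loxodromic.
\end{enumerate}

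The same argument also proves the second assertion without the convergence lemma. Run it with $b$ a tight cut around $\mu$ whose limit set misses $\Lambda(g)$, and with $\omega_2=\lambda_+$. This produces a loxodromic with a limit point outside $\Lambda(g)$.
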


Note that if $g, h \in G$ are two loxodromic elements such that $\Lambda(g) \neq \Lambda(h)$, then since $\Lambda(g) \cup \Lambda(h) \subset \Lambda(\langle g,h\rangle)$, we automatically have that $|\Lambda(\langle g,h\rangle)| = \infty$ by Proposition~\ref{prop:limsets-classification}.



\subsection{The Boolean ring of finite cuts}

Let $\Gamma$ be a connected graph. Given a subset $b \subset V(\Gamma)$, write $b^\ast = V(\Gamma) \setminus b$. Let $\delta b \subset E(\Gamma)$ denote the set of edges with one endpoint in $b$ and one in $b^\ast$. Borrowing terminology from algebraic topology, $\delta b$ is called the \emph{coboundary} of $b$. Let 
$$
\br (\Gamma) := \{b \in \mathscr P(V(\Gamma)), \ |\delta b| < \infty\}.
$$
This is a Boolean ring\footnote{Recall that a \emph{Boolean ring} is a ring where every element $x$ is \emph{idempotent}, i.e. satisfies $x^2 = x$.} where `addition' is symmetric difference and `multiplication' is intersection. 
We may abuse terminology and refer to an element of $\br(\Gamma)$ as a \emph{cut}. A cut $b \in \br(\Gamma)$ is said to be \emph{tight} if $\Gamma[b]$ and $\Gamma[b^\ast]$ are connected. 

If $\Gamma$ is locally finite then every $b \in \br(\Gamma)$ induces a clopen bipartition $\Omega(\Gamma) = W_1 \sqcup W_2$, where
$$
W_1 = \Lambda(b), \ \ W_2 = \Lambda(b^\ast).
$$
We say that $b$ \emph{separates} two ends $\omega_1, \omega_2 \in \Omega(\Gamma)$ if $\omega_1 \in W_1$, $\omega_2 \in W_2$, or vice versa. 

If $\Gamma$ is a $G$-graph then this induces an action of $G$ upon $\br (\Gamma)$. We may therefore view $\br (\Gamma)$ as a $G$-module by `forgetting' the multiplicative structure. 
Given $n \geq 0$, let
$
\br_n(\Gamma)
$
denote the subring of $\Gamma$ generated by elements $b$ such that $|\delta b| \leq n$. We will make use of the following remarkable theorem of Dicks and Dunwoody.

\begin{theorem}[{\cite[Thm.~II.2.20]{dicks1989groups}}]\label{thm:dicks-dunwoody}
    Let $\Gamma$ be a connected graph. Then there exists a chain 
    $$
    E_1 \subset E_2 \subset E_3 \subset \ldots,
    $$
    where $E_n \subset \br_n(\Gamma)$ is a nested, $\Aut(\Gamma)$-invariant subset consisting of tight elements, such that $E_n$ generates $\br_n(\Gamma)$ as a ring. 
    In particular, $E = \bigcup_n E_n$ is a nested, $\Aut(\Gamma)$-invariant set which generates $\br(\Gamma)$ as a ring. 
\end{theorem}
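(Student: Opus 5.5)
This is the Dicks--Dunwoody theorem, so in the paper we would simply cite \cite{dicks1989groups}. If we were to reprove it, we would follow Dunwoody's ``minimal crossing'' strategy and build the $E_n$ inductively on $n$. Two elementary facts would drive everything.

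The first is submodularity of coboundaries: for $b,c\subset V(\Gamma)$,
$$
|\delta(b\cap c)|+|\delta(b\cup c)|\le |\delta b|+|\delta c|,
$$
and the same holds for the other corners $b\cap c^\ast$, $b^\ast\cap c$, and so on.

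The second is a finiteness property: for a fixed finite edge set $F$ and fixed $n$, only finitely many tight cuts $b$ satisfy $|\delta b|\le n$ and $\delta b\cap F\neq\emptyset$. In the locally finite case this follows because $\delta b$ is then a connected-ish set of at most $n$ edges determined near $F$. In general one argues via tightness: a tight cut is determined by its coboundary, and the coboundary of a tight cut meeting $F$ lies in a bounded region.

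\textbf{Step 1: reduce to tight generators.} Any $b\in\br(\Gamma)$ with $|\delta b|\le n$ is a finite Boolean combination of tight cuts $c$ with $|\delta c|\le n$. To see this, split $\Gamma[b]$ and $\Gamma[b^\ast]$ into components. Only finitely many components are incident to $\delta b$, and each component's coboundary is a subset of $\delta b$. Hence $\br_n(\Gamma)$ is generated by the $\Aut(\Gamma)$-invariant set $T_n$ of tight cuts with coboundary at most $n$.

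\textbf{Step 2: choose a nested generating subset of $T_n$.} Assume $E_{n-1}$ has been built. Consider those $b\in T_n$ not lying in the subring generated by $\br_{n-1}(\Gamma)$; such $b$ have $|\delta b|=n$. Among these, define a crossing number $\mu(b)$ as the number of elements of $T_n$ crossing $b$. This is finite by the finiteness fact above, since crossing forces $\delta c\cap\delta b\neq\emptyset$ or a bounded configuration.

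Let $E_n$ be $E_{n-1}$ together with all such $b$ that minimise $\mu$ within their ``level''. The level is the coset modulo the ring generated by $\br_{n-1}(\Gamma)$ and the previously chosen cuts; one iterates over increasing $\mu$. This choice is canonical, so $E_n$ is $\Aut(\Gamma)$-invariant.

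\textbf{Step 3: nestedness.} Suppose $b,c\in E_n$ cross. Submodularity gives that either some corner has coboundary less than $n$, or both opposite corners have coboundary exactly $n$. In the first case, one shows that $b$ is congruent to a corner modulo lower-level cuts, and that this corner crosses strictly fewer elements. This is the standard corner lemma: any cut crossing a corner crosses $b$ or $c$, and $c$ itself no longer crosses the corner. That contradicts the minimality of $\mu(b)$.

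\textbf{Step 4: generation and the union.} One checks that $E_n$ still generates $\br_n(\Gamma)$. A non-chosen $b$ can be rewritten, by induction on $\mu$, as a combination of corners with smaller $\mu$ plus lower-level elements. Finally $E=\bigcup_n E_n$ is nested because the $E_n$ form a chain of nested sets, and it generates $\br(\Gamma)=\bigcup_n\br_n(\Gamma)$.

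The main obstacle is Step 3 together with the generation check: the corner-exchange argument with exact bookkeeping of $\mu$. In particular one must ensure that replacing $b$ by a corner keeps the corner in the same coset and tight, or a tight component of it, while strictly decreasing the crossing number.
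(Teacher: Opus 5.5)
The paper gives no proof of this statement; it quotes it from Dicks--Dunwoody, so your opening remark matches the paper. The reproof you sketch, however, fails at its central step. In Step 3 the corner lemma does not show that a single corner crosses fewer cuts than $b$. Write $N(x)$ for the set of elements of $T_n$ crossing $x$. For crossing $b,c$, what you actually get is $N(b\cap c)\cup N(b^\ast\cap c^\ast)\subseteq (N(b)\cup N(c))\setminus\{b,c\}$ and $N(b\cap c)\cap N(b^\ast\cap c^\ast)\subseteq N(b)\cap N(c)$. This yields only the paired bound $\mu(b\cap c)+\mu(b^\ast\cap c^\ast)\le\mu(b)+\mu(c)-2$. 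A corner can be crossed by many cuts that cross $c$ but not $b$, so $\mu(\text{corner})<\mu(b)$ is unjustified, and minimality of $\mu(b)$ alone gives no contradiction. You need the selection rule on both sides. Let $R_i$ be the subring generated by $\br_{n-1}(\Gamma)$ and the level-$n$ cuts with $\mu<i$, so that $x$ is selected iff $x\notin R_{\mu(x)}$. For crossing selected $b,c$ with $\mu(b)=k\ge\mu(c)=j$, you must show $b\in R_k$ or $c\in R_j$. This takes a case analysis over both pairs of opposite corners. It uses that $b,c\notin\br_{n-1}(\Gamma)$ forces some opposite pair to have coboundaries $\le n$, together with $b=(b\cap c)+(b\cap c^\ast)$, $c=(b\cap c)+(b^\ast\cap c)$ and their analogues. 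The case where all four corners have coboundary exactly $n$ is exactly where this two-sided bookkeeping is needed, and you mention it but never treat it.

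Second, Step 3 does not show that the new cuts are nested with $E_{n-1}$, even though you put $E_{n-1}\subseteq E_n$. If $c\in E_{n-1}$ crosses a new $b$, then $c$ was never selected via $\mu$, so no contradiction is available on its side. Moreover, the congruences you get modulo $\br_{n-1}(\Gamma)$ are, after swapping $c$ and $c^\ast$, $b\equiv b\cap c^\ast$ and $b^\ast\equiv b^\ast\cap c^\ast$. These are two corners on the same side of $c$, not an opposite pair, and their $\mu$ may exceed $\mu(b)$.

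The standard fix is a preliminary reduction that counts only crossings with the already nested set $E_{n-1}$. An element of $E_{n-1}$ crossing such a corner must cross $b$, since it cannot cross $c$. Meanwhile $c$ crosses $b$ but not the corner, so this count strictly drops. Using $c\notin\br_{|\delta c|-1}(\Gamma)$ to ensure one of these corners has coboundary exactly $n$, every level-$n$ cut is therefore congruent modulo $\br_{n-1}(\Gamma)$ to one nested with all of $E_{n-1}$. You then run the $\mu$-minimisation inside this class. The class is closed under taking corners, because a cut nested with two crossing cuts is nested with all four of their corners.

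There are also two smaller inaccuracies. In Step 1 the components of $\Gamma[b]$ need not be tight; you must further split the complement of each component into its components. And crossing does not force $\delta c\cap\delta b\neq\emptyset$. The correct statement, which is what makes $\mu$ finite, is that if $c$ is tight and crosses $b$, then $\delta c$ meets any fixed finite connected subgraph containing $\delta b$.
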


\begin{remark}
    Since the $E_n$ are nested, we actually have that $E_n$ generates $\br_n(\Gamma)$ additively, i.e. as an abelian group. See Proposition~\ref{prop:gen-set-upgrade} below. 
\end{remark}

When combined with the following standard fact, this result becomes particularly potent. 

\begin{proposition}[{\cite[Prop.~4.1]{thomassen1993vertex}}]
    Let $\Gamma$ be a connected graph, $k > 0$, and $e \in E(\Gamma)$. Then there exists only finitely many tight $b \in \br(\Gamma)$ such that $e \in \delta b$ and $|\delta b| \leq k$. 
\end{proposition}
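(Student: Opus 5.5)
The plan is to reduce to finiteness of connected edge sets of bounded size containing a fixed edge. Take a tight $b \in \br(\Gamma)$ with $e \in \delta b$ and $|\delta b| \leq k$. The induced subgraphs $\Gamma[b]$ and $\Gamma[b^\ast]$ are connected, and $\Gamma$ is connected, so $b$ is determined by $\delta b$: the components of $\Gamma \setminus \delta b$ (edges removed) are exactly $\Gamma[b]$ and $\Gamma[b^\ast]$. Writing $e = \{u,v\}$, with say $u \in b$, we get $b$ as the component of $\Gamma \setminus \delta b$ containing $u$; swapping $u$ and $v$ covers the other case. So it suffices to show there are only finitely many possible edge sets $F = \delta b$.

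The next step is to show that $\delta b$ is \emph{connected} in a suitable sense, and then count. The key structural fact is the following. Let $F = \delta b$ be the coboundary of a tight cut and split it as $F = F_1 \sqcup F_2$ with both parts nonempty. Then there is a path in $\Gamma[b]$ or in $\Gamma[b^\ast]$ joining an endpoint of an edge of $F_1$ to an endpoint of an edge of $F_2$. This holds simply because $\Gamma[b]$ is connected and contains endpoints of both $F_1$ and $F_2$.

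The difficulty is that such paths can be long, so they do not directly bound anything. I would instead use a pure counting argument by induction on $k$, without local finiteness, since $\Gamma$ is not assumed locally finite. Fix $e$. The first attempt is to note that $F$ is a minimal edge-cut (a bond), because tightness means removing $F$ leaves exactly two components and each edge of $F$ joins them. The classical fact is then that in any graph the number of bonds of size at most $k$ containing a given edge is finite, indeed at most some $f(k)$. Proof sketch by induction: for a bond $F \ni e = \{u,v\}$, the graph $\Gamma \setminus (F \setminus \{e\})$ has $e$ as a bridge. Pick a fixed $u$–$v$ path $P$ in $\Gamma - e$, which exists unless $e$ is itself a bridge, giving just one bond $\{e\}$. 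The path $P$ must cross from $b$ to $b^\ast$, so $F$ contains some edge $e' \in P$. Then $F \setminus \{e\}$ is a bond of size at most $k-1$ in $\Gamma - e$ containing $e'$: its sides are still connected and $\Gamma - e$ is still connected, since $P$ survives. There are finitely many choices of $e' \in P$, and by induction finitely many bonds of size at most $k-1$ through each $e'$ in $\Gamma - e$. Hence the count is at most $|P|\cdot f(k-1)$, which is finite.

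The main obstacle is the inductive step, namely checking that $F \setminus \{e\}$ really is a bond of $\Gamma - e$. One needs $\Gamma - e$ connected, which is guaranteed by $P$, and both sides still connected. The latter holds because $e$ has one endpoint on each side, so removing it does not disconnect $\Gamma[b]$ or $\Gamma[b^\ast]$. Once that is verified, finiteness follows, and the correspondence $F \mapsto b$ described above (at most two cuts per bond) finishes the proof.
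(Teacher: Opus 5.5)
The paper gives no proof of this statement; it is quoted from Thomassen--Woess, so there is no internal argument to compare with. Your proof is correct and self-contained.

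The reduction to edge sets is sound. For tight $b$ with $e\in\delta b$, the components of $\Gamma-\delta b$ are exactly $\Gamma[b]$ and $\Gamma[b^\ast]$, so each edge set $F$ arises from at most two tight cuts.

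The inductive step also checks out. Since $e$ crosses the cut, $\delta_{\Gamma-e}(b)=F\setminus\{e\}$, $(\Gamma-e)[b]=\Gamma[b]$ and $(\Gamma-e)[b^\ast]=\Gamma[b^\ast]$. The graph $\Gamma-e$ is connected via the fixed path $P$, and $P$ must contain an edge of $F\setminus\{e\}$. Since $F\mapsto F\setminus\{e\}$ is injective, finiteness follows. This deletion argument is the standard one for this fact.

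One correction: drop the aside that the number is at most some $f(k)$ depending only on $k$. That is false. In a cycle of length $n$, every pair $\{e,e'\}$ with $e'\neq e$ is the coboundary of a tight cut, giving $n-1$ such cuts through $e$ with $|\delta b|=2$.

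You do not need uniformity. The inductive hypothesis should be the non-uniform statement ``for every connected graph $\Delta$, every edge $x$ of $\Delta$ and every $j$, there are finitely many tight cuts of $\Delta$ through $x$ with coboundary of size at most $j$.'' It must be quantified over all connected graphs, because you apply it to $\Gamma-e$ rather than to $\Gamma$. Correspondingly, the bound in the step is $\sum_{e'\in E(P)} N(\Gamma-e,e',k-1)$ rather than $|P|\cdot f(k-1)$, where $N(\Delta,x,j)$ denotes that finite count.

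The bridge case also deserves its one-line justification. If $e$ is a bridge, the connected sets $\Gamma[b]\ni u$ and $\Gamma[b^\ast]\ni v$ avoid $e$, so they are exactly the two components of $\Gamma-e$. Hence $\delta b=\{e\}$.
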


In particular, this has the following consequence. 
\begin{corollary}\label{cor:dicks-dunwoody-gfinite}
    Let $\Gamma$ be a connected $G$-graph, such that $G$ acts on $E(\Gamma)$ with finitely many orbits. Then $\br_n(\Gamma)$ contains a nested, $G$-invariant, $G$-finite set $E_n$ which generates $\br_n(\Gamma)$ as an abelian group. In particular, $\br_n(\Gamma)$ is a finitely generated $G$-module. 
\end{corollary}

\subsection{Some more useful results}

Before we dive in, it will be helpful to record some basic observations for later reference. 
First, we have the following fact which was first observed by M\"oller \cite{mller1992ends}; see also \cite[Prop.~7.1]{thomassen1993vertex}.

\begin{proposition}\label{prop:separating-generators}
    Let $\Gamma$ be a connected, locally finite graph. Let $S \subset \br(\Gamma)$. Suppose $\omega_1, \omega_2 \in \Omega(\Gamma)$ are separated by some $b$ in the additive span of $S$. Then there exists $b' \in S$ which separates $\omega_1$ and $\omega_2$. 
\end{proposition}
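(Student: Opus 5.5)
The plan is to prove the contrapositive. Suppose no element of $S$ separates $\omega_1$ and $\omega_2$. Then every element of the additive span of $S$ also fails to separate them. The key reformulation is via an evaluation map. For each end $\omega \in \Omega(\Gamma)$ and each $b \in \br(\Gamma)$, set $\epsilon_\omega(b) = 1$ if $\omega \in \Lambda(b)$ and $\epsilon_\omega(b) = 0$ otherwise, so that $\epsilon_\omega(b)=0$ exactly when $\omega \in \Lambda(b^\ast)$. Since $\Lambda(b)$ and $\Lambda(b^\ast)$ partition $\Omega(\Gamma)$ for cuts in a locally finite graph, a cut $b$ separates $\omega_1$ and $\omega_2$ if and only if $\epsilon_{\omega_1}(b) \neq \epsilon_{\omega_2}(b)$.

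The main step is to check that $\epsilon_\omega \colon \br(\Gamma) \to \Z/2$ is additive, i.e. that $\epsilon_\omega(b+c) = \epsilon_\omega(b)+\epsilon_\omega(c)$, where $b+c$ is the symmetric difference. Take a ray $P$ in $\omega$. Since $\delta b$ and $\delta c$ are finite and $\Gamma$ is locally finite, the ray crosses $\delta b \cup \delta c$ only finitely often. So some tail of $P$ lies entirely in $b$ or entirely in $b^\ast$, and likewise entirely in $c$ or entirely in $c^\ast$. Moreover, $\omega \in \Lambda(b)$ exactly when a tail of $P$ lies in $b$. This holds because the end of $P$ lies in the closure of that tail, and the closures of $b$ and $b^\ast$ meet $\Omega(\Gamma)$ disjointly.

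Given this, a tail of $P$ lies in $b \triangle c$ exactly when it lies in exactly one of $b$ and $c$. That is precisely the additivity of $\epsilon_\omega$ modulo $2$. It follows that $\epsilon_{\omega_1} - \epsilon_{\omega_2}$ is a group homomorphism $\br(\Gamma) \to \Z/2$.

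Now suppose $b = s_1 + \dots + s_m$ with $s_i \in S$. If no $s_i$ separates the two ends, then $\epsilon_{\omega_1}(s_i) = \epsilon_{\omega_2}(s_i)$ for each $i$. Additivity then gives $\epsilon_{\omega_1}(b) = \epsilon_{\omega_2}(b)$, contradicting the assumption that $b$ separates $\omega_1$ and $\omega_2$. Hence some $s_i \in S$ separates them, which proves the proposition.

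The only real obstacle is justifying that end membership in $\Lambda(b)$ is detected by tails of rays. This follows from the definition of the Freudenthal topology: basic neighbourhoods of an end are determined by components of $\Gamma$ minus finite sets. One takes the finite vertex set spanned by $\delta b$, so that each component of its complement lies on one side of the cut. Everything else is formal.
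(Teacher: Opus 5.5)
Your proof is correct and is essentially the paper's argument. The paper also takes rays to $\omega_1,\omega_2$ whose tails lie on one side of each summand $a_j$, and notes that two rays in the same piece of the Venn diagram of the $a_j$ lie on the same side of $b=\sum_j a_j$. Your evaluation homomorphism $\epsilon_\omega\colon \br(\Gamma)\to\Z/2$ is a tidy repackaging of that Venn-diagram observation.
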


\begin{proof}
    Let $a_1, \ldots, a_k \in S$ be such that $b = \sum_i a_i$. Suppose for the sake of a contradiction that no $a_i$ separates $\omega_1$ and $\omega_2$. Let $P_i$ be (the vertex set of) an infinite simple ray approaching $\omega_i$, such that for all $j$ we have that $P_i$ is contained entirely in either $a_j$ or $a_j^\ast$. Suppose that for each $j$, we have that either $P_1 \cup P_2 \subset a_j$ or $P_1 \cup P_2 \subset a_j^\ast$. Then $P_1$ and $P_2$ lie in the same piece of the Venn diagram formed by the $a_j$. In particular, we must either have that $P_1 \cup P_2 \subset b$ or $P_1 \cup P_2 \subset b^\ast$. This contradicts the fact that $b$ separates the $\omega_i$. 
\end{proof}

The next result provides a criterion for constructing ring generating sets of a given subring of $\br(\Gamma)$.

\begin{proposition}\label{prop:g-finite-criterion}
    Let $\Gamma$ be a connected, locally finite graph. Let $M \subset \br(\Gamma)$ be a subring. Let $S \subset M$ be such that
    $$
    \text{$\exists b \in M$ separating $\omega_1$, $\omega_2$ $\iff$ $\exists b' \in S$ separating $\omega_1$, $\omega_2$ }.
    $$
    Then for every $b \in M$ there exists $b' \in \langle S \rangle$, the subring generated by $S$, such that $b+b'$ is finite.
\end{proposition}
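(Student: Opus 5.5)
The plan is to use compactness of the end space $\Omega(\Gamma)$. Fix $b \in M$ and let $W_1 = \Lambda(b)$, $W_2 = \Lambda(b^\ast)$. Each is clopen, hence compact. The aim is to build $b' \in \langle S\rangle$ with $\Lambda(b') = W_1$ and $\Lambda(b'^\ast) = W_2$. The key auxiliary fact is that for $c \in \br(\Gamma)$ with $\Gamma$ locally finite, $\Lambda(c) = \emptyset$ implies $c$ is finite. Indeed, if $c$ were infinite, then, since $\delta c$ is finite and $\Gamma$ is locally finite, removing the finitely many endpoints of $\delta c$ leaves some infinite component contained in $c$. By König's lemma this component contains a ray, so $c$ would have a limit end. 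Applying this fact to $b + b'$, whose limit set is $\Lambda(b)\,\triangle\,\Lambda(b') = \emptyset$, gives that $b+b'$ is finite. Here we use that $\Lambda$ is additive on $\br(\Gamma)$ with respect to symmetric difference, which follows from $\Lambda(c) \sqcup \Lambda(c^\ast) = \Omega(\Gamma)$ and $\Lambda(c\cap d) = \Lambda(c)\cap\Lambda(d)$ for cuts.

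If $W_1$ or $W_2$ is empty, take $b' = 0$ or $b' = 1 = V(\Gamma)$; the latter lies in $\langle S \rangle$ provided we interpret the subring as unital. Otherwise, fix $\omega_1 \in W_1$. For each $\omega_2 \in W_2$, the cut $b$ itself separates $\omega_1$ and $\omega_2$, so by hypothesis some $s \in S$ separates them. Replacing $s$ by $s^\ast = s + 1$ if necessary, which stays in $\langle S\rangle$, we may assume $\omega_1 \in \Lambda(s)$ and $\omega_2 \in \Lambda(s^\ast)$. The sets $\Lambda(s^\ast)$ are open and cover the compact set $W_2$, so finitely many $s_1, \dots, s_m$ suffice. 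Then $c_{\omega_1} := s_1 \cap \dots \cap s_m \in \langle S\rangle$ satisfies $\omega_1 \in \Lambda(c_{\omega_1}) \subset W_1$.

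Next, the open sets $\Lambda(c_{\omega_1})$, as $\omega_1$ ranges over $W_1$, cover the compact set $W_1$. Choose finitely many, $c_1, \dots, c_n$, and set $b' = c_1 \cup \dots \cup c_n$. Unions are expressible in the ring via $x \cup y = x + y + xy$, so $b' \in \langle S\rangle$, and $\Lambda(b') = W_1$. The first paragraph then shows $b + b'$ is finite.

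The main obstacle is the complementation step. It requires $1 \in \langle S\rangle$, or else an argument avoiding complements. If the paper's rings are not unital, I would handle this by symmetry: run the same construction with roles swapped to obtain elements containing $\omega_2$ and avoiding $W_1$, and express intersections with complements as $x + xy$. If this is still insufficient, I would note that $b \in M$ and the finite part can absorb the discrepancy. I would check this bookkeeping carefully, since the remaining compactness and limit-set arguments are routine.
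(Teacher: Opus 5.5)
Your argument is correct and is essentially the paper's proof. You isolate a fixed end of $W_1$ from $W_2$ by a finite intersection of suitably oriented elements of $S$ (compactness of $W_2$), then combine finitely many such sets (compactness of $W_1$); the paper takes $\bigl(\bigcap_i a_i^\ast\bigr)^\ast$ where you take $\bigcup_i c_i$, and your lemma that a cut with empty limit set is finite supplies a step the paper leaves implicit. The unital reading is the right one: the paper's proof also takes complements freely, and the statement genuinely fails without them (e.g.\ the bi-infinite path with $S=\{\{n>0\}\}$ and $b=\{n\le 0\}$), so the fallback in your last paragraph is unnecessary and would not in fact work.
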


\begin{proof}
    Let $b \in M$ be arbitrary. We will find an element of $\langle S \rangle$ such that $b + b'$ is finite. Recall that $b$ induces a bipartition of $\Omega(\Gamma)$ into two clopen sets $W_1 \sqcup W_2$, say $W_1 = \Lambda(b)$ and $W_2 = \Lambda(b^\ast)$. Let us assume without loss of generality that both $W_1$ and $W_2$ are non-empty, lest we are done already. 

    Fix $p \in W_1$. For every $q \in W_2$, by assumption we have that there exists $b_q \in S$ such that $p \in \Lambda(b_q)$, $q \in \Lambda(b_q^\ast)$. Note that 
    $$
    W_1 \cup \bigcup_{q \in W_2} \Lambda(b_q)
    $$
    is an open cover of $\Omega(\Gamma)$. Since the space of ends is compact, there is a finite subcover. In particular, there exists a finite collection $q_1, \ldots, q_k$ such that $\bigcup_i \Lambda(b_i^\ast) \subset W_2$, where $b_i := b_{q_i}$. Thus, we have that $a_p := \bigcap_i b_i \in \langle S \rangle$ separates $p$ from all of $W_2$, i.e. $p \in \Lambda(a_p)$ and $W_2 \subset \Lambda(a_p^\ast)$.
    Now, we have an open cover 
    $$
    W_2 \cup \bigcup_{p \in W_1} \Lambda(a_p),
    $$
    where each $\Lambda(a_p)$ is disjoint from $W_2$. Passing to a finite subcover, we find a finite collection $p_1, \ldots, p_\ell$ such that $\bigcup_i \Lambda(a_i) = W_1$, where $a_i := a_{p_i}$. It follows that $c := \bigcap_i (a_i)^\ast$ induces the same clopen bipartition on $\Omega(\Gamma)$, up to orientation. More precisely, we have that $b + c^\ast$ is finite.
\end{proof}

As discussed earlier, we may sometimes wish to `forget' the multiplicative operation on a given subring of $\br(\Gamma)$. The generating sets we construct can often be salvaged during this erasure process, using the following neat observation; see the proof of \cite[Prop.~6.2]{thomassen1993vertex}. 

\begin{proposition}\label{prop:gen-set-upgrade}
    Let $\Gamma$ be a connected graph. Let $M \subset \br(\Gamma)$ be a subring and $S \subset M$ be closed under taking complements, such that $S$ generates $M$ as a ring. Suppose that $S$ is nested. Then $S$ generates $M$ as an abelian group.
\end{proposition}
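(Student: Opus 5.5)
We have to prove Proposition~\ref{prop:gen-set-upgrade}: if $S$ is nested and closed under complements and generates $M$ as a ring, then it generates $M$ additively.

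\textbf{Reduction to products.} Let $A$ denote the additive span of $S$; clearly $A \subset M$. Every element of the subring generated by $S$ is a finite sum of finite products $s_1 s_2 \cdots s_m$ with $s_i \in S$. This uses the fact that a Boolean ring has characteristic $2$, and it holds whether or not we assume a unit. So it suffices to show that every finite intersection $s_1 \cap \cdots \cap s_m$ of elements of $S$ lies in $A$. I would prove this by induction on $m$, the case $m = 1$ being trivial.

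\textbf{Nested pairs.} Nested means that for any $a, b \in S$, one of $a \subset b$, $a \subset b^\ast$, $a^\ast \subset b$, $a^\ast \subset b^\ast$ holds, i.e.\ one of the four corners $a\cap b$, $a \cap b^\ast$, $a^\ast \cap b$, $a^\ast\cap b^\ast$ is empty. I would check each case for the product $ab = a \cap b$:
\begin{itemize}
\item If $a \subset b$, then $ab = a \in S$.
\item If $b \subset a$, then $ab = b \in S$.
\item If $a \cap b = \emptyset$, then $ab = 0 = a + a \in A$.
\item If $a^\ast \cap b^\ast = \emptyset$, i.e.\ $a \cup b = V(\Gamma)$, then $ab = a + b + (a \cup b)$. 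Here $a \cup b = V(\Gamma) = a + a^\ast$, and $a^\ast \in S$ by closure under complements. Hence $ab = b + a^\ast \in A$.
\end{itemize}
So the product of any two elements of $S$ lies in $S \cup \{0\} \cup (S+S)$.

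\textbf{Induction for longer products.} For longer products the same case analysis shows that among $s_1,\dots,s_m$ either some pair is nested by inclusion, so one factor can be dropped, or some pair is disjoint, so the product is $0$. Otherwise every pair satisfies $s_i \cup s_j = V(\Gamma)$. In that remaining case I would write $s_1 s_2 = s_2 + s_1^\ast$, which gives
\[
s_1 \cdots s_m = s_2 s_3 \cdots s_m + s_1^\ast s_3 \cdots s_m .
\]
Both terms are products of $m-1$ elements of $S$, so they lie in $A$ by induction.

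The only mild obstacle is this bookkeeping, namely checking that the identity $a \cap b = b + a^\ast$ when $a \cup b = V(\Gamma)$ is valid. It is: $b + a^\ast = b \setminus a^\ast$, because $a^\ast \subset b$, and $b \setminus a^\ast = b \cap a$. Everything else is routine.
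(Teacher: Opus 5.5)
Your proof is correct and follows the paper's argument: the same four-case computation of $a \cap b$ for nested $a, b \in S$, with the fourth case handled via $V(\Gamma) = a + a^\ast$ using closure under complements. The only difference is that you spell out the passage from pairwise products to arbitrary finite products by induction on the number of factors, which the paper leaves implicit; it also follows directly from distributivity, since once every product $ab$ with $a, b \in S$ lies in the additive span, that span is closed under multiplication.
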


\begin{proof}
    It is sufficient to observe that we may express intersection in terms of the symmetric difference operation, given nested elements. Suppose that $a,b \in S$, so $a$ and $b$ are nested. Then, we have that
    $$
    a\cap b = \begin{cases}
        \emptyset &\text{if $a \subset b^\ast$,}\\
        a &\text{if $a \subset b$,}\\
        b &\text{if $b \subset a$,}\\
        a + b + V(\Gamma) &\text{if $b^\ast \subset a$.}
    \end{cases}
    $$
    It is easy to see that these cases are exhaustive. Note that $V(\Gamma) = a + a^\ast$ for any $a \in S$, and so, since $S$ is closed under complementation, we also get this element for free in the additive span. The proposition follows.
\end{proof}


\section{Elliptic cuts}

In this section we introduce the notions of a \emph{peripheral system} and an \emph{elliptic cut}, and define what it means for a graph to be \emph{accessible relative to a peripheral system}.


\subsection{Peripheral systems and elliptic cuts}

We begin by introducing the following core definitions.

\begin{definition}[Peripheral system]
    Let $\Gamma$ be a connected graph. Then a subset $\per \subset \mathscr P(V(\Gamma))$ is called a \emph{peripheral system}. 
    If $\Gamma$ is a $G$-graph for some group $G$, and $\per$ satisfies that $g H \in \per$ for all $g \in G$, $H \in \per$, then $\per$ is called \emph{$G$-invariant}. 
\end{definition}

Note that if $\per$ is $G$-invariant then this induces an action of $G$ upon $\per$ by permutations.

\begin{definition}[Elliptic cuts]
    Let $\Gamma$ be a connected graph and $\per$ a peripheral system. A cut $b \in \br(\Gamma)$ is said to be \emph{$\per$-elliptic} if for all $H \in \per$, either $b \cap H$ is finite or $b^\ast \cap H$ is finite.
    Let 
    $\br_\per(\Gamma)$ denote the set of $\per$-elliptic cuts.
\end{definition}

We may often abuse terminology and refer to an $\per$-elliptic cut as simply an \emph{elliptic cut}, if $\per$ is clear from context.
The following is a straightforward exercise in the algebra of sets.

\begin{proposition}\label{prop:subring}
    Let $\Gamma$ be a connected graph and $\per$ a peripheral system. Then 
    $\br_\per (\Gamma)$ is a subring of $\br (\Gamma)$. 
\end{proposition}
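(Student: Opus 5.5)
We need to show that $\br_\per(\Gamma)$ contains $0$ and $1$ and is closed under complementation, intersection and symmetric difference; since $\br(\Gamma)$ is already a Boolean ring, this is enough. The plan is to reduce everything to a simple reformulation of ellipticity. For a fixed $H \in \per$, say that $b$ is \emph{$H$-small} if $b \cap H$ is finite, and \emph{$H$-large} if $b^\ast \cap H$ is finite. Then $b$ is $\per$-elliptic exactly when, for every $H$, it is $H$-small or $H$-large. Each condition will be checked one peripheral $H$ at a time, by a case analysis on whether the inputs are $H$-small or $H$-large.

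\textbf{Constants and complements.} The empty set satisfies $\emptyset \cap H = \emptyset$, which is finite, and $\delta\emptyset = \emptyset$, so $\emptyset \in \br_\per(\Gamma)$. Likewise $V(\Gamma)^\ast \cap H = \emptyset$, so $V(\Gamma) \in \br_\per(\Gamma)$. Since $\delta b = \delta b^\ast$, and swapping $b$ with $b^\ast$ simply exchanges the roles of $H$-small and $H$-large, $\br_\per(\Gamma)$ is closed under complements.

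\textbf{Intersections.} Let $a, b \in \br_\per(\Gamma)$ and fix $H$. We already know $a \cap b \in \br(\Gamma)$.
\begin{itemize}
\item If $a$ or $b$ is $H$-small, then $(a\cap b)\cap H$ is contained in a finite set, so it is finite.
\item Otherwise both are $H$-large. Then $(a\cap b)^\ast \cap H = (a^\ast \cap H) \cup (b^\ast \cap H)$ is a union of two finite sets, so it is finite.
\end{itemize}
In either case $a \cap b$ is $H$-small or $H$-large.

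\textbf{Symmetric differences.} We use $a+b = (a\cap b^\ast)\cup(a^\ast\cap b)$ and $(a+b)^\ast = (a\cap b)\cup(a^\ast\cap b^\ast)$. Intersecting each piece with $H$ gives the following cases.
\begin{itemize}
\item Both $H$-small: $a+b$ is $H$-small.
\item Both $H$-large: $a+b$ is again $H$-small, since $(a+b)\cap H \subset (a^\ast\cap H)\cup(b^\ast\cap H)$.
\item One $H$-small and one $H$-large: $a+b$ is $H$-large, since its complement meets $H$ inside a union of finite sets.
\end{itemize}
Alternatively, symmetric difference can be written as $(a\cap b^\ast)\cup(a^\ast\cap b)$, and union can be expressed through intersection and complement. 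Closure under $+$ then follows directly from the two previous steps.

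There is no real obstacle here. The only point needing care is remembering that membership in $\br(\Gamma)$ (finite coboundary) is inherited from $\br(\Gamma)$ being a ring. Only the ellipticity condition has to be checked, and it is local to each $H$.
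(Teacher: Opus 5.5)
Your proof is correct and follows essentially the same route as the paper: fix $H \in \per$ and do a case analysis on whether each of the two cuts meets $H$ finitely or has complement meeting $H$ finitely, checking closure under intersection and under symmetric difference separately. Your extra observation is also valid: closure under complements and intersections already gives closure under $+$, via $x \cup y = (x^\ast \cap y^\ast)^\ast$, so that remark alone makes the symmetric-difference case analysis unnecessary.
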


\begin{proof}
    Let $b_1, b_2 \in \br_{\per}(\Gamma)$. We need to check that $b_1 \cap b_2$ and $b_1 + b_2$ are in $\br_{\per} (\Gamma)$. Fix $H \in \per$. We now split into three cases. 
    Suppose first that $b_1 \cap H$ and $b_2 \cap H$ are finite. Then $(b_1 \cap b_2) \cap H$ is certainly finite and so $b_1 \cap b_2 \in \br_\per (\Gamma)$. Also, 
    $$
    (b_1 + b_2) \cap H = b_1 \cap H + b_2 \cap H,
    $$
    which is finite.
    Secondly, assume now that $b_1 \cap H$  and $b_2 ^\ast \cap H$ are finite. Then we see that $(b_1 \cap b_2) \cap H \subset b_1 \cap H$ is finite. Note that
    $$
    (b_1 + b_2)^\ast \cap H= (H \cap b_1 ^\ast \cap b_2^\ast) \cup (H \cap b_1 \cap b_2), 
    $$
    and so $(b_1 + b_2)^\ast \cap H$ is also finite.
    Finally, in the third case we suppose that $b_1^\ast \cap H$ and $b_2 ^\ast \cap H$ are finite. Then 
    $$
    H \cap (b_1 \cap b_2)^\ast = H \cap (b_1 ^\ast \cup b_2 ^\ast ) = (b_1^\ast \cap H) \cup (b_2^\ast \cap H),
    $$
    which is finite. Furthermore, $b_1^\ast + b_2^\ast = b_1 + b_2$, and so 
    $$
    (b_1 + b_2) \cap H = (b_1 ^\ast + b_2 ^\ast ) \cap H = b_1^\ast \cap H + b_2 ^\ast \cap H, 
    $$
    which is finite. 
    In each case, we have shown that both $b_1 \cap b_2$ and $b_1 + b_2$ lie in $\br_\per (\Gamma)$. It follows that $\br_\per (\Gamma)$ is a subring of $\br (\Gamma)$. 
\end{proof}

We will almost exclusively be interested in peripheral systems which satisfy the following natural hypothesis.

\begin{definition}[Thin peripheral system]\label{def:thin}
    Let $\Gamma$ be a connected graph, then a peripheral system $\per$ is called \emph{thin} if for all $v \in V(\Gamma)$, there exists at most finitely many $H \in \per$ such that $v \in H$.
\end{definition}

Note that if $\Gamma$ is a connected, locally finite, quasi-transitive $G$-graph and $\per$ is thin and $G$-invariant, then $\per/G$ is finite. 

Our standing assumption on any peripheral system will usually be that it is thin. However, thinness alone is often not enough to immediately say anything meaningful, as thin peripheral systems can still be wild.
Much of the hard work done in this section is to establish how we can take a thin peripheral system $\per$ and substitute in its place a `nicer' system $\per'$. What `nicer' here means is not obvious, and we will pass through several iterations of `niceness' before arriving at something well-behaved enough that we gain some useful geometric control. The pay-off of all this is that, in the end, our substitutions ultimately have no effect on the material content of our results, and our main theorems will still only ask that the hypothesised peripheral system be thin. 

The first iteration of `niceness' we will consider is the following strengthening of the thin property, which we call \emph{tameness}. 

\begin{definition}[Tame peripheral system]
    Let $\Gamma$ be a connected graph, then a peripheral system $\per$ is called \emph{tame} if for any elliptic cut $b \in \br _\per (\Gamma)$ we have that only finitely many $H \in \per$ intersect both $b$ and $b^\ast$. 
\end{definition}

It is easy to see that every tame peripheral system of a locally finite graph is thin, in the above sense. The converse is false, even for quasi-transitive $G$-graphs with $G$-invariant peripheral systems, as can be seen in Example~\ref{exa:not-tame} below. 

\begin{example}\label{exa:not-tame}
    Consider the 3-regular tree $\Gamma = T_3$ with a distinguished end $\omega_0 \in \Omega(\Gamma)$, thus viewing $\Gamma$ as an infinite binary tree.
    Let $G \subset \Aut(\Gamma)$ be the subgroup which stabilises $\omega_0$, and thus preserves the level sets. It is easy to see that $G$ acts on $\Gamma$ vertex- and edge-transitively. 
    Let $\per$ denote the set of level sets, which is $G$-invariant and thin. Moreover, it is not too hard to see that every $b \in \br(\Gamma)$ is an elliptic cut, and so $\br_\per(\Gamma) = \br(\Gamma)$. However, $\per$ is not tame, as removing any edge from $\Gamma$ separates vertices in infinitely many level sets. See Figure~\ref{fig:nontameexample} for a cartoon.
    \begin{figure}[h]
         \centering
        \input{figs/non-tame-example}
        \caption{Example of a  thin peripheral system which is not tame.}
        \label{fig:nontameexample}
    \end{figure}
\end{example}

\subsection{Coarse connectivity of peripherals}

We now take a quick detour to introduce some terminology to help us understand the geometric structure of peripherals. In particular, we will look at their `coarse components'. 

\begin{definition}[Coarse components]
    Let $\Gamma$ be a connected graph, $H \subset V(\Gamma)$, and $r > 0$. We say that two vertices $x,y\in H$ \emph{lie in the same $r$-coarse component of $H$} if they are contained in the same connected components of the induced graph $\Gamma[B(H,r)]$. 
\end{definition}

Note that the $r$-coarse components of $H$ naturally partition $H$, much in the same way that connected components do.
For brevity, we introduce the following terminology.

\begin{definition}
    We say that $H$ has \emph{unbounded coarse components} if there exists $r > 0$ such that every $r$-coarse component of $H$ is infinite in diameter. 

    If there exists $r > 0$ such that every $r$-coarse component $K$ of $H$ satisfies $|\Lambda(K)| = \infty$, then we say that $H$ has \emph{big coarse components}. 

    Finally, if there exists $r > 0$ such that $H$ has exactly one $r$-coarse component, then we say that $H$ is \emph{($r$-)coarsely connected}. 
\end{definition}

\begin{example}
    The peripheral system in Example~\ref{exa:not-tame} does not have unbounded coarse components, despite every peripheral containing infinitely many vertices. 
\end{example}




\begin{remark}\label{rem:replace-with-neighbourhoods}
    It is easy to see that, given $r > 0$, if we replace a peripheral system $\per$ with
    $$
    \per_{+r} := \{B(H,r) : H \in \per\}, 
    $$
    then $\br_\per(\Gamma) = \br_{\per_{+r}}(\Gamma)$. Moreover, if $\per$ is thin/tame, then so is $\per_{+r}$. 
    %
\end{remark}

\subsection{Minimised peripheral systems}

There are many ways a peripheral system can contain `redundancies'. The first, most obvious one is that some peripherals may be finite. 
Note that we do not, in general, lose anything by assuming that every $H \in \per$ is infinite. Given a peripheral system $\per$, we can define
$$
\per_{\inf} := \{H \in \per : |H| = \infty\}.
$$
Then, it is easy to see that any $b \in \br (\Gamma)$ is $\per$-elliptic if and only if it is $\per_{\inf}$-elliptic.
We can do slightly better than this by inspecting the limit sets of peripherals.

\begin{definition}[Minimised peripheral system]
    Let $\Gamma$ be a connected, locally finite graph and $\per$ a peripheral system. The \emph{minimisation} of $\per$, denoted $\per^\bullet$, is defined as
    $$
    \per^\bullet  := \Big\{ H \in \per: |\Lambda(H)| > 1\Big\}.
    $$
    We say that $\per$ is \emph{minimised} if $\per = \per^\bullet$. 
\end{definition}

\begin{proposition}\label{prop:min-preserves-cuts}
    Let $\Gamma$ be a connected, locally finite graph and $\per$ a peripheral system. Then $\br_\per(\Gamma) = \br_{\per^\bullet}(\Gamma)$. 
\end{proposition}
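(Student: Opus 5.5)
Since $\per^\bullet \subset \per$, every $\per$-elliptic cut is automatically $\per^\bullet$-elliptic, so $\br_\per(\Gamma) \subset \br_{\per^\bullet}(\Gamma)$. For the reverse inclusion, it suffices to show the following: if $H \subset V(\Gamma)$ satisfies $|\Lambda(H)| \leq 1$, then \emph{every} cut $b \in \br(\Gamma)$ has $b \cap H$ or $b^\ast \cap H$ finite. Applying this to each $H \in \per \setminus \per^\bullet$ shows that a $\per^\bullet$-elliptic cut is $\per$-elliptic.

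The key step is a compactness argument in the Freudenthal compactification $|\Gamma|$. Fix $b \in \br(\Gamma)$. Because $\delta b$ is finite and $\Gamma$ is locally finite, both $\overline{b}$ and $\overline{b^\ast}$ meet $\Omega(\Gamma)$ in the disjoint clopen sets $\Lambda(b)$ and $\Lambda(b^\ast)$, as recalled in the preliminaries.

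Now suppose both $b \cap H$ and $b^\ast \cap H$ are infinite. An infinite set of vertices in a locally finite connected graph has an accumulation point in the compact space $|\Gamma|$. That point cannot be a vertex, since vertices are isolated points of the vertex set. It therefore lies in $\Omega(\Gamma)$. Hence $\Lambda(b\cap H)$ is non-empty and contained in $\Lambda(b)\cap\Lambda(H)$, and likewise $\Lambda(b^\ast\cap H)$ is non-empty and contained in $\Lambda(b^\ast)\cap\Lambda(H)$. Since $\Lambda(b)$ and $\Lambda(b^\ast)$ are disjoint, $\Lambda(H)$ contains at least two ends, contradicting $|\Lambda(H)| \leq 1$. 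Note that this argument also handles finite $H$, for which $\Lambda(H) = \emptyset$.

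The only point needing care is the claim that $\Lambda(b)$ and $\Lambda(b^\ast)$ are disjoint, i.e. that an end cannot lie in the closure of both sides. I would justify it directly: let $K$ be the finite set of endpoints of edges in $\delta b$. Each component of $\Gamma \setminus K$ lies entirely in $b$ or entirely in $b^\ast$. The basic neighbourhood of an end $\omega$ determined by its component $C$ of $\Gamma\setminus K$ therefore meets only one of $b, b^\ast$ outside $K$, so $\omega$ lies in the closure of at most one side. I expect no real obstacle here; this is the standard fact already used when cuts were said to induce clopen bipartitions.
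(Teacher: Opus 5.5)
Your proposal is correct and follows the same route as the paper: the inclusion $\br_\per(\Gamma)\subseteq\br_{\per^\bullet}(\Gamma)$ is immediate from $\per^\bullet\subset\per$, and for $H\in\per\setminus\per^\bullet$ the condition $|\Lambda(H)|\leq 1$ forces $b\cap H$ or $b^\ast\cap H$ to be finite. The paper just asserts that last implication. You supply the missing detail correctly, using accumulation points in $|\Gamma|$ together with the disjointness of $\Lambda(b)$ and $\Lambda(b^\ast)$; you should also note that an accumulation point of a set of vertices cannot be an interior point of an edge, which is equally immediate.
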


\begin{proof}
    Fix $b \in \br (\Gamma)$. Firstly, it is easy to see that if b is $\per$-elliptic then $b$ is $\per^\bullet$-elliptic, since $\per^\bullet \subset \per$. 
    Conversely, suppose that $b$ is $\per^\bullet$-elliptic. Given $H \in \per \setminus \per^\bullet$, we have that $|\Lambda(H)| \leq 1$, which implies that either $b \cap H$ is finite or $b^\ast \cap H$ is finite. In particular, $b$ is $\per$-elliptic. The proposition follows.
\end{proof}

The following justifies our decision to minimise our peripheral sets.




    


\begin{lemma}\label{lem:unbounded-coarse-comps}
    Let $\Gamma$ be a connected, locally finite, quasi-transitive $G$-graph. Let $\per$ be a minimised, $G$-invariant, thin peripheral system. Then the following hold:
    \begin{enumerate}
        \item\label{itm:unbounded-coarsecomps}  Every $H \in \per$ has unbounded coarse components.

        \item\label{itm:big-coarse-comps} If $H \in \per$ satisfies $|\Lambda(H)| = \infty$, then $H$ has big coarse components.
    \end{enumerate}
\end{lemma}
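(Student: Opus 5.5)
The plan is to reduce everything to the dynamics of the setwise stabiliser $G_H$ acting on $H$. The point is that $G_H$ acts on $H$ with finitely many orbits. Once this is established, $\Lambda(H)=\Lambda(G_H)$, and we can use loxodromic elements of $G_H$ to tie each vertex of $H$ to an infinite orbit inside a single coarse component.

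\emph{Step 1: $G_H$ acts on $H$ with finitely many orbits.} Fix $H \in \per$ and a vertex $v$ with $Gv \cap H \neq \emptyset$. By thinness, $v$ lies in only finitely many peripherals $H_1,\dots,H_m$. If $x = gv \in H$, then $v \in g^{-1}H$, so $g^{-1}H = H_i$ for some $i$.

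Suppose $x = gv$ and $y = hv$ are two points of $H \cap Gv$ with $g^{-1}H = h^{-1}H = H_i$. Then $hg^{-1} \in G_H$ and $hg^{-1}x = y$. Hence $H \cap Gv$ is a union of at most $m$ orbits of $G_H$. Since $\Gamma$ has finitely many $G$-orbits of vertices, $H$ is a finite union of $G_H$-orbits, say $H = G_H\{x_1,\dots,x_k\}$. By the remark on limit sets of finitely many orbits, $\Lambda(H) = \Lambda(G_H)$.

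\emph{Step 2: part (\ref{itm:unbounded-coarsecomps}).} Since $\per$ is minimised, $|\Lambda(G_H)| = |\Lambda(H)| > 1$. By Proposition~\ref{prop:independent-lox}, $G_H$ contains a loxodromic element $g$.

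Choose $r$ so large that for each $i$ there is a path from $x_i$ to $gx_i$ of length at most $2r$ (for instance $r \geq \max_i d(x_i,gx_i)$). Such a path lies in $B(x_i,r) \subset B(H,r)$, so $x_i$ and $gx_i$ lie in the same $r$-coarse component. Since $g \in G_H$ preserves $H$ and hence $B(H,r)$, $g$ permutes the $r$-coarse components. Applying powers of $g$ and chaining, the whole orbit $\langle g\rangle x_i$ lies in the $r$-coarse component of $x_i$. This orbit has infinite diameter, because $|\Lambda(g)| = 2$ and so $g$ is not elliptic.

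An arbitrary $y \in H$ has the form $y = hx_j$ with $h \in G_H$. Since $h$ is an automorphism preserving $B(H,r)$, the $r$-coarse component of $y$ contains $h\langle g\rangle x_j$, which is again unbounded.

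\emph{Step 3: part (\ref{itm:big-coarse-comps}).} If $|\Lambda(H)| = \infty$, then Proposition~\ref{prop:independent-lox} gives loxodromics $g, g' \in G_H$ with $|\Lambda(\langle g,g'\rangle)| = \infty$. Now choose $r \geq \max_i \max\{d(x_i,gx_i), d(x_i,g'x_i)\}$. The same chaining argument shows that the $r$-coarse component of $x_i$ contains $\langle g,g'\rangle x_i$, whose limit set is $\Lambda(\langle g,g'\rangle)$ and hence infinite.

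Translating by $h \in G_H$ handles every other component. Here we use that $h$ extends to a homeomorphism of $|\Gamma|$, so $\Lambda(hK) = h\Lambda(K)$, and that every vertex of $H$, and hence every coarse component, contains some $hx_j$.

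The only genuinely non-formal step is Step 1, i.e. extracting cocompactness of $G_H$ on $H$ from thinness and quasi-transitivity. After that, the argument is a routine use of Proposition~\ref{prop:independent-lox}.
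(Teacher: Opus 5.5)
Your proof is correct and follows the paper's argument. A loxodromic element of $G_H$ (for part (2), a pair of loxodromics with $|\Lambda(\langle g,g'\rangle)|=\infty$) is obtained from Proposition~\ref{prop:independent-lox}, its orbits are coarsely connected, and the finitely many $G_H$-orbits on $H$ make $r$ uniform. Your Step~1 spells out why $G_H$ has finitely many orbits on $H$, and hence why $\Lambda(H)=\Lambda(G_H)$, which the paper uses without proof; the one slip, harmless, is that a path of length $2r$ need not lie in $B(x_i,r)$, but it does lie in $B(\{x_i,gx_i\},r)\subset B(H,r)$, which is all you need.
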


\begin{proof}
    Fix $H \in \per$. By Proposition~\ref{prop:independent-lox}, there exists a loxodromic element $t \in G_H$, where $G_H \leq G$ denotes the setwise stabiliser of $H$. Thus, for every $x \in H$, there exists $r > 0$ such that $\Gamma[B(\langle t\rangle \cdot x,r]$ is connected. Since $G_H$ acts on $H$ with finitely many orbits, we may take this value of $r$ uniformly. In particular, $H$ has unbounded coarse components. 

    Suppose the limit set of $H$ is infinite. Then by Proposition~\ref{prop:independent-lox} there exists a finitely generated subgroup $K \subset G_H$ such that $\Lambda(K)$ is infinite. In particular, $K$-orbits of vertices are coarsely connected. Again, since $G_H$ acts with finitely many orbits on $H$, we deduce that $H$ has big coarse components. 
\end{proof}

\begin{lemma}\label{lem:unbounded-components-tame}
    Let $\Gamma$ be a connected, locally finite, quasi-transitive $G$-graph. Let $\per$ be a $G$-invariant, thin peripheral system. Suppose every infinite $H \in \per$ has unbounded coarse components. Then $\per$ is tame. 
\end{lemma}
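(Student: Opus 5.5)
Fix an elliptic cut $b \in \br_\per(\Gamma)$. We must show that only finitely many $H \in \per$ meet both $b$ and $b^\ast$. Finite peripherals are harmless in the count only if there are finitely many relevant ones, so we treat the infinite and finite peripherals separately.

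\emph{Infinite peripherals.} Since $\per$ is thin and $G$-invariant and $\Gamma$ is quasi-transitive, $\per/G$ is finite. Hence there is a single $r > 0$ such that every infinite $H \in \per$ has all of its $r$-coarse components of infinite diameter. (Take the maximum of the witnesses over orbit representatives; each witness is $G$-equivariant.) Let $H$ be infinite and meet both $b$ and $b^\ast$. Since $b$ is elliptic, one side is finite on $H$, say $|b \cap H| < \infty$ after possibly swapping $b$ and $b^\ast$. Pick $x \in b \cap H$. The $r$-coarse component $K$ of $H$ containing $x$ has infinite diameter, so it contains points of $H \cap b^\ast$. Thus there is a path in $\Gamma[B(H,r)]$ from $x$ to a point of $b^\ast$, and this path crosses some edge $e \in \delta b$. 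Both endpoints of $e$ lie within distance $r$ of $H$. Hence $H$ meets the finite set $F := B(V(\delta b), r)$, which is finite by local finiteness. By thinness, only finitely many $H \in \per$ meet $F$. Therefore only finitely many infinite peripherals meet both sides of $b$.

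\emph{Finite peripherals.} This is the main obstacle, since finite peripherals need not have unbounded coarse components. For the diameter bound I would first try the following. Each finite $H$ lies in one of finitely many $G$-orbits, so the diameters of finite peripherals are uniformly bounded by some $D$. A finite $H$ meeting both $b$ and $b^\ast$ contains points $u \in b$ and $v \in b^\ast$ with $d(u,v) \le D$. A geodesic from $u$ to $v$ crosses $\delta b$, so $H \subset B(V(\delta b), D)$. This is again a finite set, and by thinness only finitely many $H$ meet it.

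Combining the two cases proves tameness. The only real subtlety to double-check is uniformity of $r$ and $D$, which follows from $\per/G$ being finite together with $G$-invariance of distances. One should also be careful that the paper's definition of a cut does not require connectivity of $\Gamma[H]$. We only used paths inside $\Gamma[B(H,r)]$ and geodesics in $\Gamma$, so no such assumption is needed.
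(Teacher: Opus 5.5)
Your proof is correct and is essentially the paper's argument, run directly rather than by contradiction: a uniform $r$ comes from finiteness of $\per/G$, so every infinite peripheral meeting both $b$ and $b^\ast$ meets the finite set $B(V(\delta b),r)$, and thinness then bounds the count. Your bounded-diameter treatment of the finite peripherals is a welcome addition, since it supplies the missing justification for the paper's terse remark that only finitely many of the straddling peripherals can be finite ``since $\per$ is thin''.
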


\begin{proof}
    Suppose $\per$ is not tame, so there exists $b \in \br_\per(\Gamma)$ and an infinite sequence $H_1, H_2 , \ldots \in \per$ of distinct elements such that both $H_i \cap b$ and $H_i \cap b^\ast$ are non-empty for all $i \geq 1$. Without loss of generality, let us assume that each $H_i$ is infinite (indeed, only finitely many of the $H_i$ can be finite since $\per$ is thin), and $b^\ast \cap H_i$ is finite for every $i \geq 1$. We have $H_i$ has unbounded $r$-coarse components, for some uniform $r > 0$ since $\per /G$ is finite. It follows every $H_i$ intersects a bounded neighbourhood around $\delta b$, and so $\per$ cannot be thin. 
\end{proof}

The above lemmas have the following nice consequence. 

\begin{theorem}\label{lem:thin-min-implies-tame}
    Let $\Gamma$ be a connected, locally finite, quasi-transitive $G$-graph. Let $\per$ be a minimised, thin, $G$-invariant peripheral system. Then $\per$ is tame.
\end{theorem}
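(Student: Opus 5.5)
This follows by chaining Lemma~\ref{lem:unbounded-coarse-comps} with Lemma~\ref{lem:unbounded-components-tame}. The plan is to check that the hypotheses of the latter lemma hold for a minimised, thin, $G$-invariant peripheral system $\per$.

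Lemma~\ref{lem:unbounded-components-tame} asks that $\per$ be $G$-invariant and thin, which we are given, and that every infinite $H \in \per$ have unbounded coarse components. Since $\per$ is minimised, every $H \in \per$ satisfies $|\Lambda(H)| > 1$. In particular every $H \in \per$ is infinite, because a finite set has empty limit set. Part~\eqref{itm:unbounded-coarsecomps} of Lemma~\ref{lem:unbounded-coarse-comps} then says that every $H \in \per$ has unbounded coarse components. So the hypothesis on infinite peripherals holds for all of $\per$, and Lemma~\ref{lem:unbounded-components-tame} yields that $\per$ is tame.

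The only point needing care is the use of Proposition~\ref{prop:independent-lox} inside the proof of Lemma~\ref{lem:unbounded-coarse-comps}. There we need $|\Lambda(G_H)| > 1$, and this uses that $\Lambda(G_H) = \Lambda(H)$. That equality holds because $G_H$ acts on $H$ with finitely many orbits: by thinness and local finiteness, $\per/G$ is finite, and stabilisers act cofinitely on their peripherals. I would also confirm that the uniform constant $r$ used in Lemma~\ref{lem:unbounded-components-tame} exists, which follows from $\per/G$ being finite. Beyond these checks, the argument is a direct composition of the two lemmas, and I do not expect any genuine obstacle.
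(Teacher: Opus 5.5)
Your proposal is correct and is exactly the paper's argument: minimisation makes every $H\in\per$ infinite, Lemma~\ref{lem:unbounded-coarse-comps}(\ref{itm:unbounded-coarsecomps}) gives unbounded coarse components, and Lemma~\ref{lem:unbounded-components-tame} then gives tameness. One small correction to your side remark: $G_H$ has finitely many orbits on $H$ because of thinness together with quasi-transitivity (each vertex orbit representative lies in only finitely many peripherals), not because of local finiteness, and finiteness of $\per/G$ alone would not give it.
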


\begin{proof}
    By Lemma~\ref{lem:unbounded-coarse-comps}(\ref{itm:unbounded-coarsecomps}), since $\per$ is minimised we have that each $H \in \per$ is infinite and has unbounded coarse components, and so we conclude that $\per$ is tame by Lemma~\ref{lem:unbounded-components-tame}.
\end{proof}

\subsection{Separable peripheral systems}

Minimisation is not the only way we can simplify the structure of our peripheral systems. It is quite common that two peripherals might be completely indistinguishable by elliptic cuts, in that they always fall on the same side of any such cut. When this happens, we might as well combine these peripherals into one. This is formalised by the following definition.

\begin{definition}[Separable peripheral system]
    Let $\Gamma$ be a connected graph and $\per$ a peripheral system. Given $H_1, H_2 \in \per$, we say that $H_1$ and $H_2$ are \emph{distinguishable} if there exists $b \in \br_\per(\Gamma)$ such that $b \cap H_1$ and $b^\ast \cap H_2$ are both infinite. If $H_1$ and $H_2$ are not distinguishable, we say they are \emph{indistinguishable}. 
    
    We say that $\per$ is \emph{separable} if all pairs of distinct, infinite $H_1, H_2 \in \per$ are distinguishable.
\end{definition}

The following proposition is easy, but useful to note.

\begin{proposition}\label{prop:distinguishable-disjointlimset}
    Let $\Gamma$ be a connected, locally finite graph, $\per$ be a peripheral system, and $H_1, H_2 \in \per$. 
    Suppose $H_1$ and $H_2$ are distinguishable. Then $\Lambda(H_1) \cap \Lambda(H_2) = \emptyset$. 
\end{proposition}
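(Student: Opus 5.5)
Fix an elliptic cut $b \in \br_\per(\Gamma)$ witnessing distinguishability, so that $b \cap H_1$ and $b^\ast \cap H_2$ are both infinite. Since $b$ is $\per$-elliptic, exactly one of $b\cap H_1$, $b^\ast\cap H_1$ is finite, and likewise for $H_2$; both cannot be finite, because $H_1 \supset b\cap H_1$ is infinite. So ellipticity forces $b^\ast \cap H_1$ finite and $b \cap H_2$ finite. In other words, up to a finite set, $H_1 \subset b$ and $H_2 \subset b^\ast$.

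The next step is to pass to limit sets. A finite set has empty limit set in $|\Gamma|$, since the ends form a closed set disjoint from $V(\Gamma)$ and finite sets are closed. Limit sets are also monotone and finitely additive: $\Lambda(A \cup B) = \Lambda(A) \cup \Lambda(B)$, because closure commutes with finite unions. Writing $H_1 = (H_1 \cap b) \cup (H_1 \cap b^\ast)$, this gives $\Lambda(H_1) = \Lambda(H_1 \cap b) \subset \Lambda(b)$. Similarly, $\Lambda(H_2) \subset \Lambda(b^\ast)$.

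Finally, for a finite cut $b$ in a locally finite graph, $\Lambda(b)$ and $\Lambda(b^\ast)$ are disjoint, since they form the clopen bipartition $W_1 \sqcup W_2$ of $\Omega(\Gamma)$ recorded in the preliminaries. Hence $\Lambda(H_1) \cap \Lambda(H_2) \subset \Lambda(b) \cap \Lambda(b^\ast) = \emptyset$.

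The only point that needs any care is the disjointness $\Lambda(b)\cap\Lambda(b^\ast)=\emptyset$, and this is supplied by the standard fact quoted earlier. If a direct justification is wanted, it goes as follows. Let $K$ be the finite set of endpoints of edges in $\delta b$. Every component of $\Gamma\setminus K$ lies entirely in $b$ or entirely in $b^\ast$. A ray representing an end has a tail in one such component, so the end cannot be a limit of both $b$ and $b^\ast$.
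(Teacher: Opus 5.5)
Your proof is correct and is essentially the paper's argument. The paper argues by contradiction: it takes a common end $\omega$ and notes that points approaching $\omega$ eventually lie on one side of $b$, which contradicts ellipticity, while you run the same reasoning directly by showing $\Lambda(H_1)\subset\Lambda(b)$ and $\Lambda(H_2)\subset\Lambda(b^\ast)$; both versions rest on the fact that $\Lambda(b)$ and $\Lambda(b^\ast)$ are disjoint for a cut with finite coboundary.
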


\begin{proof}
    Suppose there exists $\omega \in\Lambda(H_1) \cap \Lambda(H_2) $, and also that there exists $b \in \br_\per (\Gamma)$ such that $b \cap H_1$ and $b^\ast \cap H_2$ are both infinite. Any sequence of points in $\Gamma$ which approaches $\omega$ must eventually  be contained in one of $b$ or $b ^\ast$, say $b$ without loss of generality. But then $H_1$ and $H_2$ both have infinite intersection with $b$, and so in particular $b$ cannot be elliptic. 
\end{proof}

Given a non-separable peripheral system $\per$, we can define a relation $\sim$ on $\per$, where 
$$
H_1 \sim H_2 \ \ \Longleftrightarrow \ \ 
\begin{cases}
    \text{$H_1 = H_2$, or}\\ \text{$H_1$ and $H_2$ are both infinite and indistinguishable.}
\end{cases}
$$
We now verify this is an equivalence relation.

\begin{proposition}
    Let $\Gamma$ be a connected graph and $\per$ a peripheral system. Then $\sim$ is an equivalence relation on $\per$. 
\end{proposition}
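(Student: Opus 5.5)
Reflexivity and symmetry are quick, so the substance is transitivity. Reflexivity is built into the definition, since $H_1 = H_2$ is one of the allowed cases. For symmetry, first note that indistinguishability of $H_1$ from $H_2$ is really a symmetric condition. Indeed, suppose $b \in \br_\per(\Gamma)$ witnesses that $H_2$ and $H_1$ are distinguishable, so $b \cap H_2$ and $b^\ast \cap H_1$ are infinite. Then $b^\ast \in \br_\per(\Gamma)$, since $\br_\per(\Gamma)$ is closed under complements (it is a subring containing $V(\Gamma)$, or directly because the definition is symmetric in $b$ and $b^\ast$). Moreover $b^\ast \cap H_1$ and $(b^\ast)^\ast \cap H_2 = b \cap H_2$ are both infinite. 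So the same cut, complemented, distinguishes $H_1$ and $H_2$ in the other order. Infiniteness of both sets is also symmetric, so $\sim$ is symmetric.

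For transitivity, suppose $H_1 \sim H_2$ and $H_2 \sim H_3$. If any two of these are equal, the conclusion is immediate. Otherwise all three are infinite, with $H_1, H_2$ indistinguishable and $H_2, H_3$ indistinguishable. Suppose for a contradiction that $H_1$ and $H_3$ are distinguishable, witnessed by some elliptic $b$ with $b \cap H_1$ and $b^\ast \cap H_3$ both infinite. Since $b$ is elliptic and $H_2$ is infinite, exactly one of $b \cap H_2$ and $b^\ast \cap H_2$ is infinite. We split into these two cases.

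If $b \cap H_2$ is infinite, then $b \cap H_2$ and $b^\ast \cap H_3$ are both infinite, so $b$ distinguishes $H_2$ from $H_3$, a contradiction. If instead $b^\ast \cap H_2$ is infinite, then $b \cap H_1$ and $b^\ast \cap H_2$ are both infinite, so $b$ distinguishes $H_1$ from $H_2$, again a contradiction. Hence $H_1 \sim H_3$.

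There is no real obstacle here. The only point needing care is that each case uses the correct orientation of the definition of distinguishability, which the symmetry observation in the first paragraph makes harmless.
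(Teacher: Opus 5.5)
Your proof is correct and is essentially the paper's argument. Any elliptic cut $b$ distinguishing $H_1$ from $H_3$ must meet the infinite set $H_2$ infinitely on one side, so it distinguishes $H_2$ from $H_1$ or from $H_3$; you also spell out the symmetry of distinguishability (via $b \mapsto b^\ast$) and the degenerate equality cases, which the paper treats as obvious.
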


\begin{proof}
    Firstly, $\sim$ is obviously reflexive and symmetric. We need only verify transitivity. Suppose $H_1 \sim H_2$ and $H_2 \sim H_3$, but there exists $b \in \br _\per(\Gamma)$ which distinguishes $H_1$ and $H_3$. Then $H_2$ must have infinite intersection with either $b$ or $b^\ast$. But then $b$ must distinguish $H_2$ from either $H_1$ or $H_3$, a contradiction. 
\end{proof}

This allows us to define the \emph{consolidation} of a peripheral system $\per$, which we denote as $\per^\circ$, via
$$
\per^{\circ} := \Bigg\{ \bigcup_{H \in C} H : C \in \per /\sim\Bigg\}.
$$
Note that na\"ive consolidation may sometimes change the set of elliptic cuts, as is illustrated by the following examples. 

\begin{example}\label{exa:typei-reduction-changes-cuts}
    Consider the following examples:
    \begin{enumerate}
        \item Let $\Gamma$ be a bi-infinite path with $V(\Gamma) = \Z$ identified in the natural way. Let
    $$
    \per = \Big\{ \{m :  m> n\} : n \in \Z \Big\}.
    $$
    Then $\br_\per(\Gamma) = \br(\Gamma)$. However, $\per ^\circ = \{V(\Gamma)\}$, and so it follows that $\br_{\per^\circ}(\Gamma) = \mathscr F (V(\Gamma))$. 

    \item For a thin example, we may return our gaze to Example~\ref{exa:not-tame}. We see in this example that $\per^\circ = \{V(\Gamma)\}$, and thus once again we conclude that
    $$
    \br_\per(\Gamma) = \br(\Gamma) \neq  \mathscr F (V(\Gamma))= \br_{\per^\circ}(\Gamma).
    $$
    \end{enumerate}

\end{example}

For balance, we present the following interesting example of consolidation of peripheral systems in planar graphs, which demonstrates how consolidation can return a much more `natural' peripheral system.

\begin{example}
    Let $\Gamma$ be an infinite-ended, 2-connected, locally finite, planar graph, with some fixed embedding $\vartheta : |\Gamma| \to \mathbb S^2$ of its Freudenthal compactification in the 2-sphere. This embedding exists by \cite[Lem.~12]{richter20023}. Each complementary components of the image is a topological disk, and its boundary is a simple closed curve \cite[Prop.~3]{richter20023}. These simple closed curves maybe correspond to either finite facial cycles in $\Gamma$, or possibly infinite topological circuits, called the \emph{infinite facial circuits}. In general, an infinite facial circuit is formed of a disjoint union of bi-infinite simple rays, called \emph{facial rays}, together with a collection of ends. 

    Consider the peripheral system $\mathcal F = \{V(R) : \text{$R$ is a facial ray}\}$. It is not too hard to see that two facial rays $R_1, R_2 \in \mathcal F$ are distinguishable if and only if they do not lie on a common facial circuit. Indeed, this follows from cut-cycle duality in planar graphs. In particular, we have that 
    $$
    \mathcal F^\circ = \{V(C) : \text{$C \subset |\Gamma|$ is a facial circuit}\}. 
    $$
    Thus, consolidation has recovered a very natural partition of our peripherals. See Figure~\ref{fig:planar-consol-example} for a cartoon.
\end{example}

\begin{figure}
     \centering
    \tikzset{every picture/.style={line width=0.75pt}} 

\begin{tikzpicture}[x=0.75pt,y=0.75pt,yscale=-1,xscale=1]

\draw  [color={rgb, 255:red, 74; green, 144; blue, 226 }  ,draw opacity=1 ][fill={rgb, 255:red, 231; green, 241; blue, 255 }  ,fill opacity=1 ][dash pattern={on 3.75pt off 1.5pt}] (412.5,130.17) .. controls (412.5,89.21) and (445.71,56) .. (486.67,56) .. controls (527.63,56) and (560.83,89.21) .. (560.83,130.17) .. controls (560.83,171.13) and (527.63,204.33) .. (486.67,204.33) .. controls (445.71,204.33) and (412.5,171.13) .. (412.5,130.17) -- cycle ;
\draw  [color={rgb, 255:red, 74; green, 144; blue, 226 }  ,draw opacity=1 ][fill={rgb, 255:red, 255; green, 255; blue, 255 }  ,fill opacity=1 ][dash pattern={on 3.75pt off 1.5pt}] (425.46,130.17) .. controls (425.46,96.36) and (452.86,68.96) .. (486.67,68.96) .. controls (520.47,68.96) and (547.88,96.36) .. (547.88,130.17) .. controls (547.88,163.97) and (520.47,191.38) .. (486.67,191.38) .. controls (452.86,191.38) and (425.46,163.97) .. (425.46,130.17) -- cycle ;
\draw  [color={rgb, 255:red, 208; green, 2; blue, 27 }  ,draw opacity=1 ][fill={rgb, 255:red, 255; green, 216; blue, 221 }  ,fill opacity=1 ][dash pattern={on 3.75pt off 2.25pt}] (347.33,161.75) .. controls (357.54,149.1) and (356.33,136.25) .. (355.58,124.75) .. controls (354.83,113.25) and (346.95,99.11) .. (334.17,87) .. controls (321.38,74.89) and (305.7,71.81) .. (305.58,70) .. controls (305.46,68.19) and (304.08,68.5) .. (305.08,65) .. controls (306.08,61.5) and (307.58,62.5) .. (310.08,62.5) .. controls (312.58,62.5) and (344.08,74.75) .. (352.83,90.25) .. controls (361.58,105.75) and (363.83,106.75) .. (364.58,124) .. controls (365.33,141.25) and (365.58,149.5) .. (357.58,164.75) .. controls (349.58,180) and (336.83,193.75) .. (333.58,193) .. controls (330.33,192.25) and (330.41,190.9) .. (330,190.92) .. controls (329.59,190.93) and (328.08,189.25) .. (327.83,186.25) .. controls (327.58,183.25) and (337.13,174.4) .. (347.33,161.75) -- cycle ;
\draw  [color={rgb, 255:red, 208; green, 2; blue, 27 }  ,draw opacity=1 ][fill={rgb, 255:red, 255; green, 216; blue, 221 }  ,fill opacity=1 ][dash pattern={on 3.75pt off 2.25pt}] (284.42,205.25) .. controls (268.42,203) and (257.84,195.58) .. (252.42,191.5) .. controls (246.99,187.42) and (238.98,178.65) .. (238.42,177.25) .. controls (237.85,175.85) and (238.42,172.25) .. (239.92,171.5) .. controls (241.42,170.75) and (243.92,169.25) .. (246.17,171.75) .. controls (248.42,174.25) and (262.67,187.25) .. (269.92,190.75) .. controls (277.17,194.25) and (284,195.19) .. (292.92,195.25) .. controls (301.83,195.31) and (314.35,192.25) .. (317.17,192.25) .. controls (319.99,192.25) and (320.17,192.5) .. (321.42,195.5) .. controls (322.67,198.5) and (321.17,198) .. (319.42,199) .. controls (317.67,200) and (300.42,207.5) .. (284.42,205.25) -- cycle ;
\draw  [color={rgb, 255:red, 208; green, 2; blue, 27 }  ,draw opacity=1 ][fill={rgb, 255:red, 255; green, 216; blue, 221 }  ,fill opacity=1 ][dash pattern={on 3.75pt off 2.25pt}] (225.42,153.25) .. controls (224.42,149.75) and (222.67,147.25) .. (223.67,145.25) .. controls (224.67,143.25) and (225.11,142.96) .. (227.58,143.25) .. controls (230.06,143.54) and (228.67,143.25) .. (229.92,144.75) .. controls (231.17,146.25) and (231.42,149.5) .. (232.42,152.25) .. controls (233.42,155) and (238.17,159.75) .. (236.67,162.5) .. controls (235.17,165.25) and (237.17,164.5) .. (234.67,165.5) .. controls (232.17,166.5) and (232.77,165.37) .. (229.92,162) .. controls (227.07,158.63) and (226.42,156.75) .. (225.42,153.25) -- cycle ;
\draw  [color={rgb, 255:red, 208; green, 2; blue, 27 }  ,draw opacity=1 ][fill={rgb, 255:red, 255; green, 216; blue, 221 }  ,fill opacity=1 ][dash pattern={on 3.75pt off 2.25pt}] (235.08,85.5) .. controls (245.29,72.85) and (251.47,71.76) .. (260.83,67.25) .. controls (270.2,62.74) and (284.61,59.46) .. (287.08,59.75) .. controls (289.56,60.04) and (290.9,62.86) .. (290.83,64) .. controls (290.76,65.14) and (291.08,65) .. (289.83,68.75) .. controls (288.58,72.5) and (264.08,73.25) .. (250.58,88.25) .. controls (237.08,103.25) and (232.58,126) .. (231.08,128.75) .. controls (229.58,131.5) and (226.91,132.15) .. (226.5,132.17) .. controls (226.09,132.18) and (225.43,134.12) .. (222.58,130.75) .. controls (219.73,127.38) and (224.88,98.15) .. (235.08,85.5) -- cycle ;
\draw  [dash pattern={on 0.84pt off 2.51pt}] (226.5,132.17) .. controls (226.5,95.07) and (256.57,65) .. (293.67,65) .. controls (330.76,65) and (360.83,95.07) .. (360.83,132.17) .. controls (360.83,169.26) and (330.76,199.33) .. (293.67,199.33) .. controls (256.57,199.33) and (226.5,169.26) .. (226.5,132.17) -- cycle ;
\draw  [draw opacity=0][line width=1.5]  (316.74,195.55) .. controls (309.54,198.17) and (301.77,199.6) .. (293.67,199.6) .. controls (272.9,199.6) and (254.32,190.22) .. (241.95,175.45) -- (293.67,132.17) -- cycle ; \draw  [line width=1.5]  (316.74,195.55) .. controls (309.54,198.17) and (301.77,199.6) .. (293.67,199.6) .. controls (272.9,199.6) and (254.32,190.22) .. (241.95,175.45) ;  
\draw  [draw opacity=0][line width=1.5]  (226.4,127.27) .. controls (228.75,94.62) and (254.34,68.39) .. (286.72,65.08) -- (293.67,132.17) -- cycle ; \draw  [line width=1.5]  (226.4,127.27) .. controls (228.75,94.62) and (254.34,68.39) .. (286.72,65.08) ;  
\draw  [draw opacity=0][line width=1.5]  (307.21,66.09) .. controls (337.96,72.36) and (361.1,99.56) .. (361.1,132.17) .. controls (361.1,155.39) and (349.37,175.86) .. (331.51,187.99) -- (293.67,132.17) -- cycle ; \draw  [line width=1.5]  (307.21,66.09) .. controls (337.96,72.36) and (361.1,99.56) .. (361.1,132.17) .. controls (361.1,155.39) and (349.37,175.86) .. (331.51,187.99) ;  
\draw  [draw opacity=0][line width=1.5]  (233.47,162.6) .. controls (230.81,157.35) and (228.82,151.7) .. (227.6,145.76) -- (293.67,132.17) -- cycle ; \draw  [line width=1.5]  (233.47,162.6) .. controls (230.81,157.35) and (228.82,151.7) .. (227.6,145.76) ;  
\draw  [dash pattern={on 0.84pt off 2.51pt}] (419.5,130.17) .. controls (419.5,93.07) and (449.57,63) .. (486.67,63) .. controls (523.76,63) and (553.83,93.07) .. (553.83,130.17) .. controls (553.83,167.26) and (523.76,197.33) .. (486.67,197.33) .. controls (449.57,197.33) and (419.5,167.26) .. (419.5,130.17) -- cycle ;
\draw  [draw opacity=0][line width=1.5]  (509.74,193.55) .. controls (502.54,196.17) and (494.77,197.6) .. (486.67,197.6) .. controls (465.9,197.6) and (447.32,188.22) .. (434.95,173.45) -- (486.67,130.17) -- cycle ; \draw  [line width=1.5]  (509.74,193.55) .. controls (502.54,196.17) and (494.77,197.6) .. (486.67,197.6) .. controls (465.9,197.6) and (447.32,188.22) .. (434.95,173.45) ;  
\draw  [draw opacity=0][line width=1.5]  (419.4,125.27) .. controls (421.75,92.62) and (447.34,66.39) .. (479.72,63.08) -- (486.67,130.17) -- cycle ; \draw  [line width=1.5]  (419.4,125.27) .. controls (421.75,92.62) and (447.34,66.39) .. (479.72,63.08) ;  
\draw  [draw opacity=0][line width=1.5]  (500.21,64.09) .. controls (530.96,70.36) and (554.1,97.56) .. (554.1,130.17) .. controls (554.1,153.39) and (542.37,173.86) .. (524.51,185.99) -- (486.67,130.17) -- cycle ; \draw  [line width=1.5]  (500.21,64.09) .. controls (530.96,70.36) and (554.1,97.56) .. (554.1,130.17) .. controls (554.1,153.39) and (542.37,173.86) .. (524.51,185.99) ;  
\draw  [draw opacity=0][line width=1.5]  (426.47,160.6) .. controls (423.81,155.35) and (421.82,149.7) .. (420.6,143.76) -- (486.67,130.17) -- cycle ; \draw  [line width=1.5]  (426.47,160.6) .. controls (423.81,155.35) and (421.82,149.7) .. (420.6,143.76) ;  

\draw (240.25,115.4) node [anchor=north west][inner sep=0.75pt]  [color={rgb, 255:red, 208; green, 2; blue, 27 }  ,opacity=1 ]  {$\mathcal{F}$};
\draw (432.25,110.4) node [anchor=north west][inner sep=0.75pt]  [color={rgb, 255:red, 74; green, 144; blue, 226 }  ,opacity=1 ]  {$\mathcal{F}^{\circ }$};
\draw (371.33,121.9) node [anchor=north west][inner sep=0.75pt]    {$\Longrightarrow $};

\end{tikzpicture}
    \caption{The effect of consolidation on the facial rays of a planar graph.}
    \label{fig:planar-consol-example}
\end{figure}

The above examples show that we must certainly take some care while consolidating our peripheral systems, but doing so can really help illuminate the relationship between our peripherals.
To clarify the situation seen in the bad examples of Example~\ref{exa:typei-reduction-changes-cuts}, we prove the following. 

\begin{proposition}\label{prop:consol-preserves-cuts-when-tame}
    Let $\Gamma$ be a connected graph, and $\per$ a peripheral system. Then $\br_{\per^\circ}(\Gamma) \subseteq \br_\per(\Gamma)$. If $\per$ is tame, then we have equality.
\end{proposition}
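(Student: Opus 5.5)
The first inclusion is immediate from the definitions. Let $b \in \br_{\per^\circ}(\Gamma)$ and $H \in \per$, and let $\widehat H \in \per^\circ$ be the union of the $\sim$-class of $H$, so $H \subseteq \widehat H$. By assumption either $b \cap \widehat H$ or $b^\ast \cap \widehat H$ is finite. Intersecting with $H$ shows that $b \cap H$ or $b^\ast \cap H$ is finite, hence $b \in \br_\per(\Gamma)$.

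For the reverse inclusion under tameness, fix $b \in \br_\per(\Gamma)$ and a class $C \in \per/\sim$ with union $\widehat H$. If $C$ is a singleton there is nothing to show, so assume $C$ consists of infinite, pairwise indistinguishable peripherals. I first claim that all $H \in C$ lie on the same side of $b$. Suppose $H_1, H_2 \in C$ with $b^\ast \cap H_1$ finite and $b \cap H_2$ finite. Since both are infinite, $b \cap H_1$ and $b^\ast \cap H_2$ are infinite, so $b$ distinguishes $H_1$ and $H_2$, a contradiction. Thus, after possibly replacing $b$ by $b^\ast$, we may assume $b \cap H$ is finite for every $H \in C$.

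It remains to show that $b \cap \widehat H = \bigcup_{H \in C} (b \cap H)$ is finite. This is where tameness enters, and it is the only real obstacle, since $C$ may be infinite. (This is exactly what fails in Example~\ref{exa:typei-reduction-changes-cuts}.) Since $b$ is elliptic and $\per$ is tame, only finitely many $H \in \per$ meet both $b$ and $b^\ast$. Every $H \in C$ is infinite with $b \cap H$ finite, so $b^\ast \cap H \neq \emptyset$. Hence every $H \in C$ with $b \cap H \neq \emptyset$ is one of these finitely many peripherals, and all other members of $C$ satisfy $b \cap H = \emptyset$. Therefore $b \cap \widehat H$ is a finite union of finite sets, hence finite. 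So $b$ is $\per^\circ$-elliptic, and we get equality.
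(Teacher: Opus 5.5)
Your proof is correct and is essentially the paper's argument: the inclusion holds because members of $\per^\circ$ are unions of members of $\per$, and for equality you combine indistinguishability (all members of a non-singleton class must lie on the same side of $b$) with tameness (only finitely many members of the class can meet both $b$ and $b^\ast$). The paper argues by contradiction, asserting that tameness yields $H_1, H_2 \in C$ with $H_1 \cap b$ and $H_2 \cap b^\ast$ infinite; your direct version just spells out the counting step the paper leaves implicit.
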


\begin{proof}
    Suppose that $b$ is $\per^\circ$-elliptic. Then it is easy to see that $b$ is $\per$-elliptic, since elements of $\per^\circ$ are just unions of elements of $\per$. This proves the first part of the proposition.
    
    Assume further that $\per$ is tame, and suppose that $b$ is $\per$-elliptic, but there exists $H \in \per^\circ$ such that both $b \cap H$ and $b^\ast \cap H$ are infinite. Write 
    $
    H = \bigcup_{H' \in C} H'
    $, 
    where $C$ is some $\sim$-equivalence class in $\per$. Since $b$ is $\per$-elliptic and $\per$ is tame, there must exist $H_1, H_2 \in C$ such that $H_1 \cap b$ and $H_2 \cap b^\ast$ are both infinite. But then $H_1$ and $H_2$ are infinite and distinguishable, which contradicts the assumption that $H_1 \sim H_2$. This proves the proposition.
\end{proof}

We now verify that consolidation preserves tameness. 

\begin{proposition}\label{prop:tameness-preserved}
    Let $\Gamma$ be a connected graph and $\per$ a peripheral system. Suppose $\per$ is tame. Then $\per^\circ$ is tame. 
\end{proposition}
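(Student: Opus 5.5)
The approach is to transfer tameness of $\per$ to $\per^\circ$ via Proposition~\ref{prop:consol-preserves-cuts-when-tame}, and then control the equivalence classes that straddle a given cut.

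Fix $b \in \br_{\per^\circ}(\Gamma)$. Since $\per$ is tame, Proposition~\ref{prop:consol-preserves-cuts-when-tame} gives $\br_{\per^\circ}(\Gamma) = \br_\per(\Gamma)$, so $b$ is also $\per$-elliptic. We must show that only finitely many $\widehat H = \bigcup_{H \in C} H \in \per^\circ$ meet both $b$ and $b^\ast$.

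Suppose $\widehat H$, with class $C$, meets both sides. There are two cases.
\begin{enumerate}
\item Some single $H \in C$ meets both $b$ and $b^\ast$. By tameness of $\per$ there are only finitely many such $H$. Since the classes partition $\per$, each such $H$ lies in exactly one class, so only finitely many classes arise this way.
\item Every $H \in C$ lies entirely in $b$ or entirely in $b^\ast$, and both options occur. Then there exist $H_1, H_2 \in C$ with $H_1 \subset b$ and $H_2 \subset b^\ast$, and these are distinct. If $C$ were a singleton this could not happen, so $C$ consists of infinite, mutually indistinguishable peripherals. In particular $H_1$ and $H_2$ are infinite, so $b \cap H_1 = H_1$ and $b^\ast \cap H_2 = H_2$ are both infinite. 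Since $b \in \br_\per(\Gamma)$, $b$ distinguishes $H_1$ from $H_2$, contradicting $H_1 \sim H_2$. So this case never occurs.
\end{enumerate}
Hence only finitely many elements of $\per^\circ$ meet both $b$ and $b^\ast$, and $\per^\circ$ is tame.

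The only delicate point is the bookkeeping for singleton or finite classes. A finite $H$ is only $\sim$-related to itself, so its class is a singleton and falls under case 1. No real obstacle is expected beyond checking that the case split is exhaustive.
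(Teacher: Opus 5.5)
Your proof is correct and follows essentially the same route as the paper. Both arguments reduce to the case where a single constituent $H \in \per$ of a class meets both $b$ and $b^\ast$, then conclude using tameness of $\per$ and the fact that each such $H$ lies in exactly one class. Your Case~2 spells out, via indistinguishability, the step the paper states only briefly: that each crossing union must contain some constituent $H_i^j$ meeting both $b$ and $b^\ast$.
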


\begin{proof}
    By Proposition~\ref{prop:consol-preserves-cuts-when-tame}, since $\per$ is tame we know that 
    $
    \br_{\per^\circ}(\Gamma) = \br_{\per}(\Gamma)
    $. 
    Let $b \in \br_\per(\Gamma)$. 
    Suppose then that there exists infinitely many $H \in \per^\circ$ which intersect both $b$ and $b^\ast$, say $H_1, H_2, \ldots \in \per^\circ$. Without loss of generality, we may assume that each $H_i$ is a union
    $$
    H_i = \bigcup_j H_i^j
    $$
    of at least two distinct, infinite, pairwise indistinguishable elements of $\per$. Since $\br_{\per^\circ}(\Gamma) = \br_{\per}(\Gamma)$, we deduce that for each $i$ there is some $j$ such that $H_i^j$ intersects both $b$ and $b^\ast$. Let $H_i'$ denote such a choice. We necessarily have that $H_i ' \neq H_j'$ for all $i \neq j$, but then $\per$ itself is not tame, a contradiction.
\end{proof}

Of course, we must verify that consolidation actually produces separable peripheral systems. This is the content of the next proposition. 

\begin{proposition}\label{prop:actually-seperable}
    Let $\Gamma$ be a connected graph and $\per$ a tame peripheral system. Then $\per^\circ$ is separable.
\end{proposition}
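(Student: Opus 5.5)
Take two distinct, infinite elements $H, K \in \per^\circ$. We must produce $b \in \br_{\per^\circ}(\Gamma)$ with $b \cap H$ and $b^\ast \cap K$ both infinite. Write $H = \bigcup_{H' \in C} H'$ and $K = \bigcup_{K' \in D} K'$, where $C \neq D$ are distinct $\sim$-classes in $\per$.

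First I arrange representatives in $\per$. If some member of $C$ and some member of $D$ are both infinite, pick infinite $H_1 \in C$ and $K_1 \in D$. Since $C \neq D$, we have $H_1 \not\sim K_1$. By definition of $\sim$, two distinct infinite elements are inequivalent exactly when they are distinguishable. So there is $b \in \br_\per(\Gamma)$ with $b \cap H_1$ and $b^\ast \cap K_1$ infinite.

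Now transfer this to $\per^\circ$. Since $\per$ is tame, Proposition~\ref{prop:consol-preserves-cuts-when-tame} gives $\br_{\per^\circ}(\Gamma) = \br_\per(\Gamma)$, so $b \in \br_{\per^\circ}(\Gamma)$. Moreover $H_1 \subseteq H$ and $K_1 \subseteq K$, so $b \cap H \supseteq b \cap H_1$ and $b^\ast \cap K \supseteq b^\ast \cap K_1$ are both infinite. Hence $H$ and $K$ are distinguishable.

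The remaining issue, which I expect to be the only genuine obstacle, is the degenerate case where a class contains no infinite member. A class with at least two elements consists entirely of infinite sets, by the definition of $\sim$. So a class with no infinite member is a singleton $\{H'\}$ with $H'$ finite, and then the corresponding element of $\per^\circ$ is $H'$ itself, which is finite. This contradicts the assumption that $H$ and $K$ are infinite. So the first case always applies, and $\per^\circ$ is separable.
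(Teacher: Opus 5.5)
Your proof is correct and follows the paper's argument. You pick infinite representatives of the two distinct $\sim$-classes, note that inequivalent infinite peripherals are distinguishable by some $b \in \br_\per(\Gamma)$, and use tameness with Proposition~\ref{prop:consol-preserves-cuts-when-tame} to get $b \in \br_{\per^\circ}(\Gamma)$; your degenerate-case discussion just spells out why the representatives are infinite, which the paper only asserts.
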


\begin{proof}
    We may assume without loss of generality that $\per^\circ$ contains at least two infinite elements, lest there is nothing to prove.
    Let $H_1, H_2 \in \per^\circ$ be distinct, infinite elements, say
    $
    H_i = \bigcup_{K \in C_i} K
    $
    for some some distinct equivalence classes $C_1, C_2 \in \per/\sim$. Let $K_i \in C_i$ be arbitrary. Note that the $K_i$ are necessarily infinite. Since $K_1$ and $K_2$ are not in the same equivalence class, they are distinguishable. Thus, there exists some $b \in \br_\per(\Gamma)$ such that $K_1 \cap b$ is infinite and $K_2 \cap b^\ast$ is infinite. By Proposition~\ref{prop:consol-preserves-cuts-when-tame}, we have that $b$ is also $\per^\circ$-elliptic. In particular, we must necessarily have that $b$ separates $H_1$ from $H_2$. 
\end{proof}

One major benefit of working with separable peripheral systems that we gain access to the following dichotomy for pairs of ends.

\begin{proposition}\label{prop:end-separating-dichotomy}
    Let $\Gamma$ be a connected, locally finite graph and $\per$ a separable, tame peripheral system. Then for all distinct  $\omega_1, \omega_2 \in \Omega(\Gamma)$, exactly one of the following holds:
    \begin{enumerate}
        \item There exists $b \in \br_\per(\Gamma)$ separating $\omega_1$ and $\omega_2$, or

        \item There exists $H \in \per$ such that $\omega_1, \omega_2 \in \Lambda(H)$.
    \end{enumerate}
\end{proposition}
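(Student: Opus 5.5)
The plan is to prove the two halves separately: first that (1) and (2) cannot hold together, then that at least one holds. For the second half I reduce to the peripherals whose limit sets contain $\omega_1$ or $\omega_2$, and use separability to show there is at most one such peripheral for each end.

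\emph{Mutual exclusivity.} Suppose $b \in \br_\per(\Gamma)$ separates the ends, with $\omega_1 \in \Lambda(b)$ and $\omega_2 \in \Lambda(b^\ast)$. Suppose also some $H \in \per$ has $\omega_1, \omega_2 \in \Lambda(H)$. Take a sequence in $H$ converging to $\omega_1$. Since $\delta b$ is finite and $\Lambda(b)$ is a clopen neighbourhood of $\omega_1$, this sequence eventually lies in $b$, so $H \cap b$ is infinite. Arguing the same way at $\omega_2$, $H \cap b^\ast$ is infinite. This contradicts ellipticity of $b$. This is essentially the argument of Proposition~\ref{prop:distinguishable-disjointlimset}.

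\emph{At least one holds.} First I show that for each $i$ there is at most one infinite $H_i \in \per$ with $\omega_i \in \Lambda(H_i)$. If two distinct infinite peripherals both had $\omega_i$ in their limit sets, then by Proposition~\ref{prop:distinguishable-disjointlimset} they would be indistinguishable, contradicting separability. Finite peripherals have empty limit set, so they play no role.

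If $H_1$ and $H_2$ both exist and are equal, alternative (2) holds. If they both exist and are distinct, separability gives $b \in \br_\per(\Gamma)$ with $H_1 \cap b$ and $H_2 \cap b^\ast$ both infinite. Ellipticity then forces $H_1 \cap b^\ast$ and $H_2 \cap b$ to be finite, so $\Lambda(H_1) \subset \Lambda(b)$ and $\Lambda(H_2) \subset \Lambda(b^\ast)$. Hence $b$ separates $\omega_1$ from $\omega_2$, which is alternative (1).

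The remaining case is when, say, no peripheral has $\omega_1$ in its limit set; this is where the real work lies. The plan has four steps.
\begin{enumerate}
    \item Let $c_n$ be the component of $\Gamma \setminus B(v,n)$ whose closure contains $\omega_1$, for a fixed base vertex $v$. Then $c_n \in \br(\Gamma)$ and $\Lambda(c_n)$ shrinks to $\{\omega_1\}$.
    \item Since $\Lambda(H_2)$ (if $H_2$ exists) and $\{\omega_2\}$ are compact and avoid $\omega_1$, for large $n$ the cut $c_n$ separates the ends and $H_2 \cap c_n$ is finite.
    \item The obstruction is that some other peripheral may still meet both $c_n$ and $c_n^\ast$ infinitely, and tameness only controls elliptic cuts. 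To get around this, I would use the hypothesis at each end $\eta \in \Lambda(c_n)$ and each end $\eta' \in \Lambda(c_n^\ast)$. Since $\omega_1$ lies in no peripheral's limit set, every peripheral limit set is disjoint from a neighbourhood of $\omega_1$. I would apply the case analysis above to the pairs $(\omega_1, \eta')$ to obtain elliptic cuts separating $\omega_1$ from each $\eta'$ in $\Lambda(c_n^\ast)$.
    \item Then, as in the proof of Proposition~\ref{prop:g-finite-criterion}, compactness of $\Lambda(c_n^\ast)$ lets me intersect finitely many of these cuts. Since $\br_\per(\Gamma)$ is a subring (Proposition~\ref{prop:subring}), the result is an elliptic cut separating $\omega_1$ from all of $\Lambda(c_n^\ast) \ni \omega_2$.
\end{enumerate}

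The circularity in step 3 is the main obstacle. Handling pairs $(\omega_1, \eta')$ where $\eta'$ has its own peripheral $H_{\eta'}$ but $\omega_1$ has none requires separating $\omega_1$ from a single limit set $\Lambda(H_{\eta'})$ by an elliptic cut. I would attempt this with tameness plus thinness. Choose $n$ large enough that $c_n$ meets only finitely many peripherals in both $c_n$ and $c_n^\ast$ infinitely. Then correct $c_n$ by adding or removing the finitely many offending pieces $H \cap c_n$, using distinguishing elliptic cuts for pairs of these peripherals. I expect this trimming step, and verifying that the corrected set still has finite coboundary, to be the hardest part.
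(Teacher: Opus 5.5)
Your proof of exclusivity (which the paper leaves implicit) and your treatment of the case where each end lies in some peripheral limit set are correct. The gap is in the remaining case, exactly where you flag it. Nothing lets you choose $n$ so that only finitely many $H\in\per$ have infinite intersection with both $c_n$ and $c_n^\ast$: tameness only constrains $\per$-elliptic cuts, and $c_n$ need not be elliptic.

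This step cannot be repaired, because the statement is false at this generality. Here is a counterexample.
\begin{itemize}
\item \emph{Construction.} Join the roots of two rooted binary trees $L,R$ by an edge, and write ends as $(X,\sigma)$ with $X\in\{L,R\}$ and $\sigma\in\{0,1\}^{\N}$. Put $\alpha_k=(L,0^k10^\infty)$, $\beta_k=(R,0^k10^\infty)$, $\alpha_\infty=(L,0^\infty)$ and $\beta_\infty=(R,0^\infty)$. Let $H_k$ consist of the vertices of depth at least $k$ on the geodesic rays from the roots to $\alpha_k$ and $\beta_k$.
\item \emph{Elliptic cuts.} A cut is elliptic if and only if $\alpha_k,\beta_k$ lie on the same side of it for every $k$.
\item \emph{Hypotheses hold.} The system is thin. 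It is tame: for elliptic $b$, each $H_k$ with $k$ exceeding the depth of every edge of $\delta b$ has both of its rays wholly on one side, necessarily the same side. It is separable: the union of the subtrees below $(L,0^i1)$ and $(R,0^i1)$ is an elliptic cut containing a tail of $H_i$ and missing every other $H_j$.
\item \emph{Conclusion fails.} An elliptic cut with $\alpha_\infty$ and $\beta_\infty$ on opposite sides would, by clopenness, put $\alpha_k$ and $\beta_k$ on opposite sides for large $k$. Also, no $\Lambda(H_k)=\{\alpha_k,\beta_k\}$ contains $\alpha_\infty$ or $\beta_\infty$. So neither alternative holds.
\end{itemize}
In this example every $c_n$ around $\alpha_\infty$ is straddled by all $H_k$ with $k>n$.

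The paper's proof has a cleaner shape that dissolves your circularity. It first separates an end $\omega$ lying in no peripheral limit set from a single $\Lambda(H)$:
\begin{itemize}
\item take any cut $b$ with $\Lambda(H)\subset\Lambda(b)$ and $\omega\in\Lambda(b^\ast)$;
\item list the peripherals $H_1,\dots,H_k$ meeting both $b$ and $b^\ast$ infinitely;
\item use separability to choose elliptic $c_j$ separating $H_j$ from $H$;
\item pass to $b\cap\bigcap_j c_j$.
\end{itemize}
No trimming is needed, since finite coboundary is automatic in the ring. Two such ``free'' ends are then handled by the same intersection trick, with the first case supplying the $c_j$.

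However, the paper justifies the finiteness of $H_1,\dots,H_k$ by ``since $\per$ is tame'' for a non-elliptic $b$. That is the very step you could not justify, and the example above refutes it. If you add the hypothesis that for every $b\in\br(\Gamma)$ only finitely many $H\in\per$ meet both $b$ and $b^\ast$ infinitely, the paper's intersection argument is valid and completes your proof.
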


\begin{proof}
    Let $H \in \per$, and $\omega \in \Omega(\Gamma) \setminus \Lambda(H)$. Of course, if no such $\omega$ exists then there is nothing to prove. If there exists $H' \in \per$ such that $\omega \in \Lambda(H')$, then since $\per$ is separable we are done. Thus, we may assume without loss of generality that no such $H'$ exists. 

    Since $\Lambda(H)$ is closed in $\Omega(\Gamma)$, there exists a clopen set $W  \ni \omega$ such that $\Lambda(H) \cap W = \emptyset$. Write $W' = \Omega(\Gamma) \setminus W$. Choose $b \in \br(\Gamma)$ such that $\Lambda(b) = W'$ and $\Lambda(b^\ast) = W$. If $b$ is $\per$-elliptic then we are done, else there exist peripherals in $\per$ which have infinite intersection with both $b$ and $b^\ast$. Since $\per$ is tame, this collection is finite, say $H_1, \ldots, H_k \in \per$. For each $H_j$, by separability we have that there exists $c_j \in \br_\per(\Gamma)$ such that $c_j \cap H_j$ is finite and $c_j ^\ast \cap H$ is finite, i.e. $c_j$ separates $H_j$ from $H$. 
    Consider now the cut
    $$
    a := b \cap \bigcap_{j=1}^k c_j. 
    $$
    We first observe that $a$ is $\per$-elliptic. Indeed, suppose some $H'\in \per$ had infinite intersection with both $a$ and $a^\ast$. Then $H'$ has infinite intersection with $b$ and with all of the $c_j$. Since the $c_j$ are $\per$-elliptic, we have that $H' \cap c_j^\ast$ is finite. Thus, $H' \cap b^\ast$ is infinite. But then $H' = H_j$ for some $j$. This contradicts the fact that $c_j \cap H_j$ is finite. Thus $a$ is $\per$-elliptic. Also, observe that $a$ separates $\omega$ from $\Lambda(H)$. Indeed, we have that $\Lambda(H) \subset \Lambda(a)$ since $\Lambda(H) \subset \Lambda(b) \cap \bigcap_j \Lambda(c_j)$, and $\omega \in \Lambda(b^\ast) \subset \Lambda(a^\ast)$. This proves the proposition in the case where at least one of the $\omega_i$ is contained in the limit set of some peripheral. 

    Now, fix distinct ends $\omega_1$, $\omega_2$ such that neither $\omega_i$ is contained in the limit set of any peripheral. We will proceed very similarly, applying the previous case to aid us. 
    Fix $b \in \br(\Gamma)$ which separates $\omega_1$ and $\omega_2$, say $\omega_1 \in \Lambda(b)$, $\omega_2 \in \Lambda(b^\ast)$. If $b$ is $\per$-elliptic then we are done. Otherwise there are finitely many $H_1, \ldots, H_k \in \per$ which have infinite intersection with both $b$ and $b^\ast$. By assumption, neither $\omega_i$ is contained in the limit set of any of the $H_j$. Applying the previous case, for each $j$ there exists $c_j \in \br_\per(\Gamma)$ such that $\omega_1 \in \Lambda(c_j)$ and $\Lambda(H_j) \subset \Lambda(c_j^\ast)$. As before, consider the cut $a := b \cap \bigcap_{j} c_j$. We can verify in exactly the same way that $a$ is $\per$-elliptic, and separates $\omega_1$ from $\omega_2$. The proposition now follows. 
\end{proof}

Everything above allows us to prove the following helpful result. 

\begin{theorem}\label{thm:wlog-reduced}
    Let $\Gamma$ be a connected, locally finite, quasi-transitive $G$-graph. Let $\per$ be a thin, $G$-invariant peripheral system. Then there exists a tame, minimised, separable, $G$-invariant peripheral system $\per'$ such that $\br_{\per'}(\Gamma) = \br_\per(\Gamma)$. 
\end{theorem}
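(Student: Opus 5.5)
The natural candidate is $\per' := (\per^\bullet)^\circ$: first minimise, then consolidate. The work is to check that each step preserves the ring of elliptic cuts and the relevant niceness properties, in the right order.

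\emph{Step 1: minimisation.} By Proposition~\ref{prop:min-preserves-cuts} we have $\br_{\per^\bullet}(\Gamma) = \br_\per(\Gamma)$. Since $\per^\bullet \subset \per$, it is thin, and it is $G$-invariant because $\Lambda(gH) = g\Lambda(H)$. By definition it is minimised. Theorem~\ref{lem:thin-min-implies-tame} then shows that $\per^\bullet$ is tame.

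\emph{Step 2: consolidation.} Set $\per' = (\per^\bullet)^\circ$. Since $\per^\bullet$ is tame, Proposition~\ref{prop:consol-preserves-cuts-when-tame} gives $\br_{\per'}(\Gamma) = \br_{\per^\bullet}(\Gamma) = \br_\per(\Gamma)$. Proposition~\ref{prop:tameness-preserved} shows $\per'$ is tame, and Proposition~\ref{prop:actually-seperable} shows it is separable. For $G$-invariance, note that $\br_{\per^\bullet}(\Gamma)$ is $G$-invariant because $\per^\bullet$ is. Hence $H_1 \sim H_2$ if and only if $gH_1 \sim gH_2$, so $G$ permutes the $\sim$-classes and therefore preserves the set of their unions.

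\emph{Step 3: $\per'$ is minimised.} This is the one point not directly covered by an earlier result. Each $H' \in \per'$ is a union $\bigcup_{H \in C} H$ of a class $C$ of elements of $\per^\bullet$. Since $\Lambda(H') \supseteq \Lambda(H)$ for any $H \in C$, and $|\Lambda(H)| > 1$, we get $|\Lambda(H')| > 1$. So $\per' = (\per')^\bullet$.

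The only subtlety I expect is the ordering. Consolidation requires tameness to preserve elliptic cuts, as Example~\ref{exa:typei-reduction-changes-cuts} shows. Tameness is obtained only via minimisation together with thinness and quasi-transitivity. So minimising first is essential, and minimality then survives consolidation because limit sets of unions only grow. I would also remark that $\per'$ need not be thin, since a union of a possibly infinite class may contain a vertex in infinitely many members. This is not required by the statement, so I would not try to prove it.
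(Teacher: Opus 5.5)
Your proposal is correct and is essentially the paper's proof: take $\per' = \per^{\bullet\circ}$, get tameness of $\per^\bullet$ from Theorem~\ref{lem:thin-min-implies-tame} (using, as you rightly do, that $\per^\bullet \subset \per$ is thin), and then apply Propositions~\ref{prop:min-preserves-cuts}, \ref{prop:consol-preserves-cuts-when-tame}, \ref{prop:tameness-preserved} and \ref{prop:actually-seperable}; your explicit checks of $G$-invariance and minimality fill in what the paper treats as immediate. One correction to your closing aside: $\per'$ is in fact thin, because distinct elements of $\per'$ containing a vertex $v$ come from distinct $\sim$-classes, each class has a member of $\per^\bullet$ containing $v$, and there are only finitely many such members (alternatively, any tame system on a locally finite graph is thin, as one sees by testing tameness against the elliptic cut $\{v\}$).
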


\begin{proof}
    Let $\per' = \per^{\bullet\circ}$. That is, $\per'$ is the consolidation of the minimisation of $\per$. Since $\per$ is tame, we have by Theorem~\ref{lem:thin-min-implies-tame} that $\per^\bullet$ is tame, and thus by Proposition~\ref{prop:tameness-preserved} we have that $\per'$ is also tame. We trivially have that $\per^\bullet$ is minimised, from which it immediately follows that $\per'$ is minimised. By Proposition~\ref{prop:actually-seperable}, since $\per^\bullet$ is tame we have that $\per'$ is separable. 
    Finally, we have by Propositions~\ref{prop:min-preserves-cuts} and \ref{prop:consol-preserves-cuts-when-tame}, that 
    $$
    \br_\per(\Gamma) = \br_{\per^\bullet}(\Gamma) = \br_{\per'}(\Gamma).
    $$
    The second equality uses the fact that $\per^\bullet$ is tame. The theorem follows.
\end{proof}

\subsection{Coning-off a peripheral system}\label{sec:cone-off}

We now give an alternative interpretation of relative Boolean ring. Given a connected graph $\Gamma$ and a peripheral system $\per$, form a new graph $\widehat \Gamma_\per$ as follows:
\begin{enumerate}
    \item For every $H \in \per$ add a new vertex $v_H$ to $\Gamma$, called the \textit{cone vertex at $H$.}

    \item Connect every $u \in H$ to $v_H$ by an edge. 
\end{enumerate}
The graph $\widehat \Gamma_\per$ is called the \emph{cone of $\Gamma$ over $\per$}. For the present paper, the utility of this construction lies in the following. 

\begin{proposition}\label{prop:ring-isomorphism}
    Let $\Gamma$ be a connected graph and $\per$ a peripheral system. Then the inclusion $\Gamma \into \widehat \Gamma_\per$ induces a ring homomorphism
    $$
    F : \br (\widehat \Gamma_{\per}) \to \br_{\per} (\Gamma). 
    $$
    We have that $F$ is surjective if and only if $\per$ is tame, and $F$ is injective if and only if every $H \in \per$ is infinite. If $\Gamma$ is a $G$-graph and $\per$ is $G$-invariant then $F$ is $G$-equivariant. 
\end{proposition}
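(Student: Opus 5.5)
The plan is to define $F(\hat b) := \hat b \cap V(\Gamma)$ for $\hat b \in \br(\widehat \Gamma_\per)$, and check in turn that it is well defined, that it is a ring homomorphism, and then the two ``if and only if'' clauses.

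\textbf{Well-definedness and homomorphism.}
\begin{enumerate}
\item Since $\delta_\Gamma(F(\hat b)) \subseteq \delta_{\widehat\Gamma_\per}(\hat b)$, the set $F(\hat b)$ has finite coboundary in $\Gamma$.
\item For ellipticity, fix $H \in \per$ and suppose $v_H \in \hat b$ (otherwise swap $\hat b$ and $\hat b^\ast$). Every $u \in H \cap \hat b^\ast$ contributes the edge $uv_H$ to $\delta \hat b$. Hence $H \cap F(\hat b)^\ast$ is finite, so $F(\hat b) \in \br_\per(\Gamma)$.
\item Intersection with $V(\Gamma)$ commutes with symmetric difference and intersection, and $F(V(\widehat\Gamma_\per)) = V(\Gamma)$. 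So $F$ is a (unital) ring homomorphism.
\item Equivariance is immediate, since $G$ acts on the cone compatibly via $g v_H = v_{gH}$.
\end{enumerate}

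\textbf{Injectivity.} Since $F$ is a homomorphism of abelian groups, it is injective iff its kernel is trivial.
\begin{enumerate}
\item The kernel consists of the finite-coboundary sets of cone vertices. A set $\{v_H : H \in S\}$ has coboundary all edges from these cone vertices to $\Gamma$.
\item If some $H$ is finite, then $\{v_H\}$ is a nonzero element of the kernel, so $F$ is not injective.
\item Conversely, suppose every $H$ is infinite. A nonempty kernel element contains some $v_H$ and so contributes the infinitely many edges $uv_H$ with $u \in H$, contradicting finite coboundary. So the kernel is trivial.
\end{enumerate}

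\textbf{Surjectivity.} This is where tameness enters.
\begin{enumerate}
\item \emph{Tame implies surjective.} Given $b \in \br_\per(\Gamma)$, put
$$\hat b := b \cup \{v_H : H \cap b^\ast \text{ finite}\}.$$
If both $H \cap b$ and $H \cap b^\ast$ are finite, either choice of side for $v_H$ is fine. The edges of $\delta \hat b$ are those of $\delta b$, plus, for each $H$, the finitely many edges joining $v_H$ to the opposite side. Edges are only contributed by $H$ meeting both $b$ and $b^\ast$, and tameness says there are only finitely many such $H$. Hence $\hat b$ has finite coboundary, and $F(\hat b) = b$.
\item \emph{Surjective implies tame.} Let $b \in \br_\per(\Gamma)$ and pick $\hat b$ with $F(\hat b) = b$. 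Any $H$ meeting both $b$ and $b^\ast$ contributes at least one edge at $v_H$ to $\delta\hat b$, whichever side $v_H$ lies on. These edges are distinct for distinct $H$, so only finitely many $H$ meet both sides, and $\per$ is tame.
\end{enumerate}

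The main subtlety is that $\per$ here is an arbitrary peripheral system, so finite and infinite peripherals must be handled together. None of the above steps needs local finiteness or thinness, and each step is elementary bookkeeping.
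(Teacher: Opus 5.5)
Your proposal follows the paper's proof essentially step for step: the restriction map, ellipticity via the edges at $v_H$, the kernel consisting of sets of cone vertices, and the distinct-edges argument for surjective $\Rightarrow$ tame. Those parts are all fine.

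The step that fails is the construction in ``tame $\Rightarrow$ surjective''. For your $\hat b = b \cup \{v_H : H\cap b^\ast \text{ finite}\}$, the claim that only peripherals meeting both $b$ and $b^\ast$ contribute edges to $\delta\hat b$ is false. Take a finite $H$ with $\emptyset\neq H\subseteq b^\ast$. Then $H\cap b^\ast$ is finite, so $v_H\in\hat b$, and all $|H|$ edges at $v_H$ lie in $\delta\hat b$, even though $H$ does not meet $b$. Your remark that ``either choice of side is fine'' when $H\cap b$ and $H\cap b^\ast$ are both finite holds for each single $H$, but not uniformly over infinitely many of them, and tameness gives no control over these. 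For a concrete failure, let $\Gamma$ be the bi-infinite path on $\Z$ and $\per=\{\{n\} : n\in\Z\}$. Every cut is elliptic, and $\per$ is tame since no singleton meets both sides. But for $b=\{n : n\geq 0\}$ your rule puts every cone vertex into $\hat b$, so $\delta\hat b$ contains the edge from $n$ to $v_{\{n\}}$ for every $n<0$.

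The repair is to put $v_H$ on the side containing $H$ whenever $H$ lies entirely on one side, for instance
\[
\hat b := b \cup \{v_H : H\cap b^\ast \text{ is finite and } H\cap b\neq\emptyset\}.
\]
With this rule, $v_H\notin\hat b$ forces either $H\cap b=\emptyset$, or $H\cap b^\ast$ infinite, in which case $H\cap b$ is finite by ellipticity. So every $H$ contributes only finitely many edges. Moreover, any $H$ that contributes an edge meets both $b$ and $b^\ast$, and your tameness count then goes through; in the example it gives $\hat b=b\cup\{v_{\{n\}} : n\geq 0\}$ with $|\delta\hat b|=1$.

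Do not fix this by switching to the paper's rule (include $v_H$ iff $b\cap H$ is infinite). That rule has the mirror-image defect: in the example it omits every $v_{\{n\}}$ with $n\geq 0$, so it fails on infinitely many finite peripherals contained in $b$. The issue only arises when there are infinitely many finite peripherals. It disappears when every peripheral is infinite, as for the minimised systems used later in the paper, and the statement itself remains true with the corrected construction.
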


\begin{proof}
    Let $F : \br (\widehat \Gamma_{\per}) \to \br (\Gamma)$ be given by restriction. That is, 
    $F(b) = b \cap V(\Gamma) $.
    It is routine to check that this is a ($G$-equivariant) homomorphism of Boolean rings. 
    
    We claim that the image of $F$ is contained in $\br_{\per} (\Gamma)$. Firstly, suppose for the sake of a contradiction that there exists some $b \in \br (\widehat \Gamma_{\per})$ and some $H \in \per$ such that both $H \cap F(b)$ and $H \cap F(b^\ast)$ are infinite. Now, either $v_H \in b$ or $v_H \not\in b$. In either case we see that $\delta b$ is infinite, a contradiction. The claim follows. 

    Suppose now that $\per$ is tame. We claim that $F$ is surjective. Fix $b \in \br_\per \Gamma$. We extend $b$ to an element $b' \in \br (\widehat \Gamma_{\per})$ by including a cone vertex $v_H$ if and only if $b \cap H$ is infinite. It follows from the tameness of $\per$ that $\delta b'$ is finite. Clearly $F(b') = b$, and so $F$ is surjective. Conversely, suppose $\per$ is not tame, so there exists $b \in \br_\per(\Gamma)$ such that both $b \cap H$ and $b^\ast \cap H$ are non-empty for infinitely many $H \in \per$. Let $S\subset \per$ denote the set of elements of $\per$ which intersect both $b$ and $b^\ast$.  Suppose there exists $b' \in F^{-1}(b)$. Then, for all $H \in S$, we have that each $v_H$ is abutted by an edge lying in $\delta b'$. Since no two of the $v_H$ are adjacent, these edges contributed to $\delta b'$ are pairwise distinct. Thus, since $S$ is infinite we conclude that $\delta b'$ must be infinite and so $b' \not\in \br(\widehat \Gamma _\per)$. It follows that such a $b'$ does not exist, and so $F$ is not surjective. 

    Finally, suppose every $H \in \per$ is infinite. We need only show that $F$ has trivial kernel. Here, the 0-element is the empty set. Clearly any $b \in \br(\widehat \Gamma_\per)$ which satisfies $F(b) = \emptyset$ cannot contain any cone vertices lest $b$ must have infinite coboundary. Thus, any element $b$ such that $F(b)$ is empty must itself be empty. Conversely, suppose some $H \in \per$ is finite. Then the singleton $b = \{v_H\}$ has finite coboundary and so is an element of $\br(\widehat \Gamma_\per)$. However, $F(b) = \emptyset$, and so $F$ has non-trivial kernel and is not injective.
\end{proof}

We remark that Proposition~\ref{prop:ring-isomorphism} is incredibly useful in this setting, since it allows us to reduce most questions about $\br_\per$ to questions about $\br$ on a different graph. It is this proposition which motivated the definition of a tame peripheral. As a first application, we learn the following strong fact for free. 

\begin{theorem}\label{thm:nested-gen-set-rel}
    Let $\Gamma$ be a connected $G$-graph and $\per$ a tame, $G$-invariant peripheral system. Then $\br_\per(\Gamma)$ contains a $G$-invariant, nested set which additively generates $\br_\per(\Gamma)$.
\end{theorem}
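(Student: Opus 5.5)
The plan is to transport the Dicks--Dunwoody nested generating set of the cone-off $\widehat\Gamma_\per$ to $\br_\per(\Gamma)$ through the homomorphism $F$ of Proposition~\ref{prop:ring-isomorphism}. First, $\widehat\Gamma_\per$ is connected: $\Gamma$ is connected and every cone vertex $v_H$ is joined to the vertices of $H$. A cone vertex attached to an empty $H$ would be isolated, so we discard empty peripherals; this leaves $\br_\per(\Gamma)$ unchanged. Since $\per$ is $G$-invariant, $G$ acts on $\widehat\Gamma_\per$ by automorphisms, via $g \cdot v_H = v_{gH}$. This gives a homomorphism $G \to \Aut(\widehat\Gamma_\per)$.

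Next, apply Theorem~\ref{thm:dicks-dunwoody} to $\widehat\Gamma_\per$. It gives a nested, $\Aut(\widehat\Gamma_\per)$-invariant (hence $G$-invariant) set $E \subset \br(\widehat\Gamma_\per)$ that generates $\br(\widehat\Gamma_\per)$ as a ring. Replace $E$ by $E \cup \{e^\ast : e \in E\}$. This set is still nested and $G$-invariant, because nestedness of a pair is symmetric under complementing either member. Proposition~\ref{prop:gen-set-upgrade} then shows that this set generates $\br(\widehat\Gamma_\per)$ additively.

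Set $S := F(E \cup E^\ast)$. By Proposition~\ref{prop:ring-isomorphism}, tameness of $\per$ makes $F$ surjective onto $\br_\per(\Gamma)$. Since $F$ is additive, $S$ additively generates $\br_\per(\Gamma)$, and $G$-equivariance of $F$ makes $S$ $G$-invariant.

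The remaining step, and the only one needing care, is nestedness of $S$. We do not need injectivity of $F$, because $F$ is just intersection with $V(\Gamma)$. If $a \subseteq b$ in $\widehat\Gamma_\per$, then $a \cap V(\Gamma) \subseteq b \cap V(\Gamma)$. Moreover, $F(b^\ast) = V(\Gamma) \setminus F(b) = F(b)^\ast$, since $F$ preserves complements. So each of the four nesting relations $a\subseteq b$, $a\subseteq b^\ast$, $a^\ast\subseteq b$, $a^\ast \subseteq b^\ast$ is carried to the corresponding relation between $F(a)$ and $F(b)$. Hence $S$ is nested, which completes the argument.
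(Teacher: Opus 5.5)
Your proposal is correct and follows the paper's proof: take the Dicks--Dunwoody nested generating set of $\br(\widehat\Gamma_\per)$, made additive via Proposition~\ref{prop:gen-set-upgrade}, and push it forward along the restriction map $F$ of Proposition~\ref{prop:ring-isomorphism}, which is surjective (by tameness), $G$-equivariant, and nesting-preserving. Your extra steps fill in details the paper leaves implicit: discarding an empty peripheral so the cone-off stays connected, and closing $E$ under complements.
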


\begin{proof}
    We know by Theorem~\ref{thm:dicks-dunwoody} and Proposition~\ref{prop:gen-set-upgrade} that $\br(\widehat \Gamma_\per)$ admits a $G$-invariant, nested, additive generating set $E$. Let $F : \br (\widehat \Gamma_{\per}) \to \br_{\per} (\Gamma)$ be the $G$-equivariant homomorphism introduced in Proposition~\ref{prop:ring-isomorphism}. Since $\per$ is tame, $F$ is surjective. Clearly $F$ preserves nesting. We may thus take $F(E)$ as our desired generating set.
\end{proof}

Note that we cannot generally obtain a generating set consisting of tight elements, contrary to the situation for $\br(\Gamma)$. Indeed, it could be the case that $\br_\per(\Gamma)$ contains no tight elements at all. 

\begin{example}
    Let $\Gamma$ denote the 4-regular tree, viewed as the standard Cayley graph of the free group $G = F_2 = \pres{a,b}{-}$. Consider the peripheral system $\per$ consisting of left cosets of the cyclic subgroups $\langle a \rangle$ and $\langle b \rangle$. This is depicted in Figure~\ref{fig:no-tight-cuts}. Every elliptic cut in this graph must have at least two edges in its coboundary, and thus cannot be tight.
\end{example}

\begin{figure}
     \centering
    \tikzset{every picture/.style={line width=0.75pt}} 

\begin{tikzpicture}[x=0.75pt,y=0.75pt,yscale=-1,xscale=1]

\draw  [color={rgb, 255:red, 74; green, 144; blue, 226 }  ,draw opacity=1 ][fill={rgb, 255:red, 232; green, 242; blue, 255 }  ,fill opacity=1 ][dash pattern={on 3.75pt off 1.5pt}] (346.33,185.13) .. controls (348.01,185.13) and (349.38,186.49) .. (349.38,188.18) -- (349.38,197.33) .. controls (349.38,199.01) and (348.01,200.38) .. (346.33,200.38) -- (274.18,200.38) .. controls (272.49,200.38) and (271.13,199.01) .. (271.13,197.33) -- (271.13,188.18) .. controls (271.13,186.49) and (272.49,185.13) .. (274.18,185.13) -- cycle ;
\draw  [color={rgb, 255:red, 74; green, 144; blue, 226 }  ,draw opacity=1 ][fill={rgb, 255:red, 232; green, 242; blue, 255 }  ,fill opacity=1 ][dash pattern={on 3.75pt off 1.5pt}] (346.33,79.13) .. controls (348.01,79.13) and (349.38,80.49) .. (349.38,82.18) -- (349.38,91.32) .. controls (349.38,93.01) and (348.01,94.38) .. (346.33,94.38) -- (274.18,94.38) .. controls (272.49,94.38) and (271.13,93.01) .. (271.13,91.32) -- (271.13,82.18) .. controls (271.13,80.49) and (272.49,79.13) .. (274.18,79.13) -- cycle ;
\draw  [color={rgb, 255:red, 74; green, 144; blue, 226 }  ,draw opacity=1 ][fill={rgb, 255:red, 232; green, 242; blue, 255 }  ,fill opacity=1 ][dash pattern={on 3.75pt off 1.5pt}] (405.2,132.13) .. controls (406.88,132.13) and (408.25,133.49) .. (408.25,135.18) -- (408.25,144.32) .. controls (408.25,146.01) and (406.88,147.38) .. (405.2,147.38) -- (215.05,147.38) .. controls (213.37,147.38) and (212,146.01) .. (212,144.32) -- (212,135.18) .. controls (212,133.49) and (213.37,132.13) .. (215.05,132.13) -- cycle ;
\draw  [color={rgb, 255:red, 208; green, 2; blue, 27 }  ,draw opacity=1 ][fill={rgb, 255:red, 255; green, 237; blue, 237 }  ,fill opacity=1 ][dash pattern={on 3.75pt off 1.5pt}] (360.38,102.3) .. controls (360.38,100.62) and (361.74,99.25) .. (363.43,99.25) -- (372.58,99.25) .. controls (374.26,99.25) and (375.63,100.62) .. (375.63,102.3) -- (375.63,177.2) .. controls (375.63,178.88) and (374.26,180.25) .. (372.58,180.25) -- (363.43,180.25) .. controls (361.74,180.25) and (360.38,178.88) .. (360.38,177.2) -- cycle ;
\draw  [color={rgb, 255:red, 208; green, 2; blue, 27 }  ,draw opacity=1 ][fill={rgb, 255:red, 255; green, 237; blue, 237 }  ,fill opacity=1 ][dash pattern={on 3.75pt off 1.5pt}] (244.88,102.3) .. controls (244.88,100.62) and (246.24,99.25) .. (247.93,99.25) -- (257.08,99.25) .. controls (258.76,99.25) and (260.13,100.62) .. (260.13,102.3) -- (260.13,177.2) .. controls (260.13,178.88) and (258.76,180.25) .. (257.08,180.25) -- (247.93,180.25) .. controls (246.24,180.25) and (244.88,178.88) .. (244.88,177.2) -- cycle ;
\draw  [color={rgb, 255:red, 208; green, 2; blue, 27 }  ,draw opacity=1 ][fill={rgb, 255:red, 255; green, 237; blue, 237 }  ,fill opacity=1 ][dash pattern={on 3.75pt off 1.5pt}] (302.63,53.05) .. controls (302.63,51.37) and (303.99,50) .. (305.68,50) -- (314.83,50) .. controls (316.51,50) and (317.88,51.37) .. (317.88,53.05) -- (317.88,223.95) .. controls (317.88,225.63) and (316.51,227) .. (314.83,227) -- (305.68,227) .. controls (303.99,227) and (302.63,225.63) .. (302.63,223.95) -- cycle ;
\draw    (310.25,86.75) -- (310.25,139.75) ;
\draw [shift={(310.25,139.75)}, rotate = 90] [color={rgb, 255:red, 0; green, 0; blue, 0 }  ][fill={rgb, 255:red, 0; green, 0; blue, 0 }  ][line width=0.75]      (0, 0) circle [x radius= 2.01, y radius= 2.01]   ;
\draw [shift={(310.25,86.75)}, rotate = 90] [color={rgb, 255:red, 0; green, 0; blue, 0 }  ][fill={rgb, 255:red, 0; green, 0; blue, 0 }  ][line width=0.75]      (0, 0) circle [x radius= 2.01, y radius= 2.01]   ;
\draw    (310.25,139.75) -- (310.25,192.75) ;
\draw [shift={(310.25,192.75)}, rotate = 90] [color={rgb, 255:red, 0; green, 0; blue, 0 }  ][fill={rgb, 255:red, 0; green, 0; blue, 0 }  ][line width=0.75]      (0, 0) circle [x radius= 2.01, y radius= 2.01]   ;
\draw [shift={(310.25,139.75)}, rotate = 90] [color={rgb, 255:red, 0; green, 0; blue, 0 }  ][fill={rgb, 255:red, 0; green, 0; blue, 0 }  ][line width=0.75]      (0, 0) circle [x radius= 2.01, y radius= 2.01]   ;
\draw    (368,139.75) -- (310.25,139.75) ;
\draw [shift={(310.25,139.75)}, rotate = 180] [color={rgb, 255:red, 0; green, 0; blue, 0 }  ][fill={rgb, 255:red, 0; green, 0; blue, 0 }  ][line width=0.75]      (0, 0) circle [x radius= 2.01, y radius= 2.01]   ;
\draw [shift={(368,139.75)}, rotate = 180] [color={rgb, 255:red, 0; green, 0; blue, 0 }  ][fill={rgb, 255:red, 0; green, 0; blue, 0 }  ][line width=0.75]      (0, 0) circle [x radius= 2.01, y radius= 2.01]   ;
\draw    (310.25,139.75) -- (252.5,139.75) ;
\draw [shift={(252.5,139.75)}, rotate = 180] [color={rgb, 255:red, 0; green, 0; blue, 0 }  ][fill={rgb, 255:red, 0; green, 0; blue, 0 }  ][line width=0.75]      (0, 0) circle [x radius= 2.01, y radius= 2.01]   ;
\draw [shift={(310.25,139.75)}, rotate = 180] [color={rgb, 255:red, 0; green, 0; blue, 0 }  ][fill={rgb, 255:red, 0; green, 0; blue, 0 }  ][line width=0.75]      (0, 0) circle [x radius= 2.01, y radius= 2.01]   ;
\draw    (252.5,139.75) -- (218.5,139.75) ;
\draw    (402,139.75) -- (368,139.75) ;
\draw    (310.25,86.75) -- (310.25,57.75) ;
\draw    (368,139.75) -- (368,110.75) ;
\draw    (368,168.75) -- (368,139.75) ;
\draw    (310.25,221.75) -- (310.25,192.75) ;
\draw    (252.5,168.75) -- (252.5,139.75) ;
\draw    (252.5,139.75) -- (252.5,110.75) ;
\draw    (310.25,86.75) -- (276.25,86.75) ;
\draw    (344.25,86.75) -- (310.25,86.75) ;
\draw    (344.25,192.75) -- (310.25,192.75) ;
\draw    (310.25,192.75) -- (276.25,192.75) ;
\draw  [draw opacity=0][fill={rgb, 255:red, 255; green, 255; blue, 255 }  ,fill opacity=1 ] (207.5,130.5) -- (214.75,130.5) -- (214.75,150.25) -- (207.5,150.25) -- cycle ;
\draw  [draw opacity=0][fill={rgb, 255:red, 255; green, 255; blue, 255 }  ,fill opacity=1 ] (405.25,130) -- (412.5,130) -- (412.5,149.75) -- (405.25,149.75) -- cycle ;
\draw  [draw opacity=0][fill={rgb, 255:red, 255; green, 255; blue, 255 }  ,fill opacity=1 ] (346.75,182.75) -- (354,182.75) -- (354,202.5) -- (346.75,202.5) -- cycle ;
\draw  [draw opacity=0][fill={rgb, 255:red, 255; green, 255; blue, 255 }  ,fill opacity=1 ] (265.75,183.25) -- (273,183.25) -- (273,203) -- (265.75,203) -- cycle ;
\draw  [draw opacity=0][fill={rgb, 255:red, 255; green, 255; blue, 255 }  ,fill opacity=1 ] (265.75,77.5) -- (273,77.5) -- (273,97.25) -- (265.75,97.25) -- cycle ;
\draw  [draw opacity=0][fill={rgb, 255:red, 255; green, 255; blue, 255 }  ,fill opacity=1 ] (347.25,77) -- (354.5,77) -- (354.5,96.75) -- (347.25,96.75) -- cycle ;
\draw  [draw opacity=0][fill={rgb, 255:red, 255; green, 255; blue, 255 }  ,fill opacity=1 ] (359,96.25) -- (377.25,96.25) -- (377.25,104) -- (359,104) -- cycle ;
\draw  [draw opacity=0][fill={rgb, 255:red, 255; green, 255; blue, 255 }  ,fill opacity=1 ] (358.5,175.5) -- (379,175.5) -- (379,183.25) -- (358.5,183.25) -- cycle ;
\draw  [draw opacity=0][fill={rgb, 255:red, 255; green, 255; blue, 255 }  ,fill opacity=1 ] (242,175.25) -- (262.5,175.25) -- (262.5,183) -- (242,183) -- cycle ;
\draw  [draw opacity=0][fill={rgb, 255:red, 255; green, 255; blue, 255 }  ,fill opacity=1 ] (300.5,224.5) -- (321,224.5) -- (321,232.25) -- (300.5,232.25) -- cycle ;
\draw  [draw opacity=0][fill={rgb, 255:red, 255; green, 255; blue, 255 }  ,fill opacity=1 ] (300.25,46.5) -- (320.75,46.5) -- (320.75,54.25) -- (300.25,54.25) -- cycle ;
\draw  [draw opacity=0][fill={rgb, 255:red, 255; green, 255; blue, 255 }  ,fill opacity=1 ] (242,96.75) -- (262.5,96.75) -- (262.5,104.5) -- (242,104.5) -- cycle ;

\end{tikzpicture}
    \caption{Peripheral system on the 4-regular tree with no tight elliptic cuts.}
    \label{fig:no-tight-cuts}
\end{figure}

We conclude this subsection by noting the following corollary of Theorem~\ref{thm:nested-gen-set-rel}, when combined with Proposition~\ref{prop:gen-set-upgrade}. 

\begin{corollary}\label{cor:gen-set-upgrade-rel-cuts}
    Let $\Gamma$ be a connected $G$-graph and $\per$ a tame, $G$-invariant peripheral system. Suppose $\br_\per(\Gamma)$ is $G$-finitely generated as a ring. Then $\br_\per(\Gamma)$ is a finitely generated $G$-module. 
\end{corollary}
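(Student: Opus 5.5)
We aim to turn a finite ring-generating set of $\br_\per(\Gamma)$ into a finite additive ($G$-module) generating set. The natural route is to combine the nested generating set from Theorem~\ref{thm:nested-gen-set-rel} with the ring-generation hypothesis and then apply Proposition~\ref{prop:gen-set-upgrade}. Concretely, let $S$ be a $G$-finite set generating $\br_\per(\Gamma)$ as a ring. By Theorem~\ref{thm:nested-gen-set-rel}, there is a $G$-invariant nested set $E \subset \br_\per(\Gamma)$ which generates $\br_\per(\Gamma)$ additively. We may assume $E$ is closed under complements; adding complements preserves nestedness and $G$-invariance.

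The first step is to write each element $s$ in a set of $G$-orbit representatives $s_1,\dots,s_m$ of $S$ as a finite sum of elements of $E$. Let $E_0 \subset E$ be the finite set of all summands used, together with their complements, and set $E' = G E_0$. Then $E'$ is $G$-finite, nested (as a subset of $E$), closed under complements, and $G$-invariant. Since each $g s_i$ is a sum of elements of $gE_0 \subset E'$, the additive span of $E'$ contains $S$.

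The second step is to check that $E'$ generates $\br_\per(\Gamma)$ as a ring. The subring generated by $E'$ contains the subring generated by $S$, which is all of $\br_\per(\Gamma)$. Proposition~\ref{prop:gen-set-upgrade} then applies to $M = \br_\per(\Gamma)$ with the nested, complement-closed ring generating set $E'$. It shows that $E'$ generates $\br_\per(\Gamma)$ as an abelian group. As $E'$ consists of finitely many $G$-orbits, $\br_\per(\Gamma)$ is a finitely generated $G$-module.

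The only point needing care is that Proposition~\ref{prop:gen-set-upgrade} requires the generating set itself to be nested and complement-closed. This is why the finite ring generators are replaced by their nested expansions through $E$, rather than used directly. Tameness enters only through Theorem~\ref{thm:nested-gen-set-rel}, which relies on the surjectivity in Proposition~\ref{prop:ring-isomorphism}. I do not expect a serious obstacle.
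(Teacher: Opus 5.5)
Your proposal is correct and matches the paper's proof essentially step for step. You expand the orbit representatives of the ring generators in the nested additive generating set from Theorem~\ref{thm:nested-gen-set-rel}, take the $G$-orbit of the finitely many summands closed under complements, and apply Proposition~\ref{prop:gen-set-upgrade}.
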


\begin{proof}
    Let $S= \{s_1, \ldots, s_k\} \subset \br_\per(\Gamma)$ be a finite set such that $G \cdot S$ generates $\br_\per(\Gamma)$ as a ring. Let $ E \subset \br _\per(\Gamma)$ be a nested, $G$-invariant generating set, given by Theorem~\ref{thm:nested-gen-set-rel}.  For each $s_i \in S$, we may express $s_i$ as a sum of elements in $E$, say $s_i = \sum_ie_{ij}$. But then $S' = G \cdot \{e_{ij}\}_{ij}$ is a $G$-invariant, $G$-finite, \textbf{nested} subset of $\br_\per(\Gamma)$ which generates $\br_\per(\Gamma)$ as a ring. We may assume without harm that $S'$ is closed under taking complements, as adding these complements doesn't affect the fact that $S'$ is $G$-invariant, $G$-finite, and nested. By Proposition~\ref{prop:gen-set-upgrade}, $S'$ generates $\br_\per(\Gamma)$ as an abelian group, and so since $S'$ is $G$-finite, we have that $\br_\per(\Gamma)$ is a finitely generated $G$-module. 
\end{proof}

\subsection{Relative accessibility}

We now define what it means for a graph to be accessible relative to a peripheral system.

\begin{definition}[Relative accessibility]
    Let $\Gamma$ be a connected, locally finite graph, and $\per$ a peripheral system.
    We then say that $\Gamma$ is \emph{accessible relative to $\per$} if there exists $k \in \N$ such that for every pair of distinct ends $\omega_1$, $\omega_2$ of $\Gamma$, if there exists an elliptic cut $b$ separating $\omega_1$ and $\omega_2$, then there exists one satisfying $|\delta b| \leq k$. 
\end{definition}

The goal of this section is to characterise relative accessibility in quasi-transitive graphs in terms of the generation properties of the subring $\br_\per$. 
To this end, we will prove the following theorem, which is one of the main results of this paper. This should be compared with {\cite[Prop.~6.2]{thomassen1993vertex}}.

\fgchar*

We will prove this in a few stages, as its surprisingly subtle. Throughout the remainder of this section, fix $\Gamma$ be a connected, locally finite, quasi-transitive $G$-graph, and $\per$ a thin, $G$-invariant peripheral system. 
By Theorem~\ref{thm:wlog-reduced}, we may assume without loss of generality that $\per$ is tame, minimised, and separable. The rough plan is then as follows. First, we may bipartition $\per$ into two parts $\per = \mathcal I \sqcup \mathcal T$, where
$$
\mathcal I = \{H \in \per : |\Lambda(H)| = \infty\}, \ \  \mathcal T = \{H \in \per : |\Lambda(H)| = 2\}.
$$
If $\mathcal T = \emptyset$, then the proof becomes substantially shorter (see Lemmas~\ref{claim:coboundary-control}, \ref{claim:inf-ended-case}). The task is to somehow reduce to this case, and show that we may safely delete $\mathcal T$ without affecting relative accessibility. Note that $\br_{\mathcal I}(\Gamma) \neq \br _\per(\Gamma)$ in general, so this reduction is a bit subtle. 

By Lemma~\ref{lem:unbounded-coarse-comps} and Remark~\ref{rem:replace-with-neighbourhoods}, we may assume without loss of generality that for every $H' \in \mathcal I$, we have that every connected component $\Delta$ of $\Gamma[H']$ satisfies $|\Lambda(\Delta)| = \infty$. 

We first observe the easy direction.

    \begin{lemma}\label{claim:easy-case}
        If $\br_\per(\Gamma)$ is a finitely generated $G$-module then $\Gamma$ is accessible relative to $\per$.
    \end{lemma}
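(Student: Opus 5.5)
The plan is to read off a uniform bound on coboundaries directly from a finite generating set, and then to transfer separation to a single generator using Proposition~\ref{prop:separating-generators}. This is the same strategy as in the absolute case \cite[Prop.~6.2]{thomassen1993vertex}.

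Suppose $\br_\per(\Gamma)$ is generated as a $G$-module by a finite set $S = \{s_1, \ldots, s_m\} \subset \br_\per(\Gamma)$. Then every element of $\br_\per(\Gamma)$ is a finite sum of elements of the $G$-invariant set $G\cdot S$. Each $s_i$ is a cut, so $|\delta s_i| < \infty$. Since $G$ acts by graph automorphisms, $|\delta(g s_i)| = |\delta s_i|$ for all $g \in G$. Set
$$
k := \max_{1 \le i \le m} |\delta s_i|,
$$
so that every element of $G \cdot S$ has coboundary of size at most $k$. Note that $G \cdot S \subset \br_\per(\Gamma)$, because $\per$ is $G$-invariant and hence $\br_\per(\Gamma)$ is $G$-invariant.

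Now let $\omega_1 \neq \omega_2$ be ends, and suppose some $\per$-elliptic cut $b$ separates them. Write $b$ as a finite sum of elements of $G \cdot S$; this is possible because $\br_\per(\Gamma)$ is the additive span of $G\cdot S$. By Proposition~\ref{prop:separating-generators}, applied with $S$ replaced by $G\cdot S$ (here local finiteness of $\Gamma$ is used), some single element $b' \in G \cdot S$ separates $\omega_1$ and $\omega_2$. This $b'$ is $\per$-elliptic and satisfies $|\delta b'| \le k$, which gives relative accessibility with constant $k$.

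There is no real obstacle here; the only points to check are bookkeeping. The first is that generation \enquote{as a $G$-module} means additive generation by $G\cdot S$, which is exactly the hypothesis Proposition~\ref{prop:separating-generators} needs. The second is that the bound $k$ is uniform, and this holds because automorphisms preserve $|\delta(\cdot)|$. None of the reductions to a tame, minimised, separable $\per$ are needed for this direction.
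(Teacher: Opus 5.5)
Your proposal is correct and follows the same argument as the paper. You take a $G$-finite additive generating set, use Proposition~\ref{prop:separating-generators} to pass from a separating $\per$-elliptic cut to a single separating generator, and get the uniform bound on $|\delta b'|$ from $G$-finiteness and the fact that automorphisms preserve coboundary size.
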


    \begin{proof}
        Let $S$ be a $G$-invariant, $G$-finite generating set of  $\br_\per(\Gamma)$ as an abelian group. Let $\omega_1$, $\omega_2$ be two ends and $b \in \br_\per(\Gamma)$ be an $\per$-elliptic cut which separates them. Then, by Proposition~\ref{prop:separating-generators}, there exists $b' \in S$ which separates $\omega_1$ from $\omega_2$. Since $S$ is $G$-finite, there is a uniform bound on $|\delta b'|$. The claim follows. 
    \end{proof}

We now proceed with the more difficult direction. We will begin by first only considering $\mathcal I$, ignoring the two-ended peripherals contributed by $\mathcal T$. Later, we will see why this is sufficient. 
    Let 
    $$
    F : \br (\widehat \Gamma_{\mathcal I}) \to \br_{\mathcal I} (\Gamma)
    $$
    denote the isomorphism given by Proposition~\ref{prop:ring-isomorphism}. To ease notation, we will use a caret to denote the inverse $F^{-1}$; i.e. $F^{-1}(b) =: \widehat b$. Our first claim establishes some quantitative control on how this map effects coboundaries.

 \begin{lemma}\label{claim:coboundary-control}
        There exists an increasing function $\rho : \N \to \N$ such that for all $b \in \br_{\mathcal I}(\Gamma)$, we have
        $$
         |\delta \widehat b| \leq \rho(|\delta b| ).
        $$
    \end{lemma}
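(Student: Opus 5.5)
Recall that $\widehat b = b \cup \{v_H : H \in \mathcal I,\ |b\cap H| = \infty\}$. An edge of $\delta\widehat b$ is either an edge of $\delta b$, or a cone edge $u v_H$ with $u \in H$. For a cone edge, $u$ lies in the finite one of $b \cap H$ and $b^\ast \cap H$. So
$$
|\delta \widehat b| = |\delta b| + \sum_{H \in \mathcal I} \min\bigl(|b\cap H|, |b^\ast \cap H|\bigr),
$$
where the minimum is taken over the finite side. The plan is to bound the number of peripherals in the sum, and the size of each term, by functions of $n := |\delta b|$ alone.

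\textbf{Step 1: only peripherals near $\delta b$ contribute.} Suppose $H \in \mathcal I$ meets both $b$ and $b^\ast$. By the standing assumption, every connected component $\Delta$ of $\Gamma[H]$ has $|\Lambda(\Delta)| = \infty$, so $\Delta$ is infinite. Since $b$ is $\mathcal I$-elliptic, one side, say $b^\ast\cap H$, is finite. Take $u \in b^\ast \cap H$ and let $\Delta$ be its component in $\Gamma[H]$. Then $\Delta$ is connected and infinite but $\Delta \cap b^\ast$ is finite, so $\Delta$ contains an edge of $\delta b$. Hence every contributing $H$ contains an endpoint of an edge of $\delta b$, and every vertex counted in its term lies in a component of $\Gamma[H]$ that meets $\delta b$. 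By thinness together with $G$-invariance and quasi-transitivity, there is a uniform constant $m$ bounding the number of peripherals containing any given vertex. Therefore at most $2mn$ peripherals contribute.

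\textbf{Step 2: bounding each term (the main obstacle).} Fix such an $H$ and a component $\Delta$ of $\Gamma[H]$ meeting $\delta b$. We need a bound on $|\Delta \cap b^\ast|$ that depends only on $n$. We argue by compactness and contradiction using $G$-finiteness.

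Suppose there are cuts $b_i$ with $|\delta b_i| \le n$ and peripherals $H_i$ with $|\Delta_i \cap b_i^\ast| \to \infty$. Translate by $G$ so that $\delta b_i$ contains an edge at one of finitely many base vertices. Since $\mathcal I/G$ is finite and each vertex lies in finitely many peripherals, we may pass to a subsequence with $H_i = H$ and $\Delta_i = \Delta$ constant. The sets $\delta b_i$ are unions of at most $n$ edges. Each edge is either within a bounded distance of the base point or escapes to infinity, and the bounded parts stabilise after a further subsequence.

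Now take a diagonal limit $b_\infty = \lim b_i$, which exists pointwise after passing to a subsequence. It has $|\delta b_\infty| \le n$, and $b_\infty^\ast \cap \Delta$ is infinite. The components of $\Delta \cap b_i^\ast$ are bounded by edges of $\delta b_i$ lying in $\Delta$. The problem is to rule out the possibility that $b_\infty \cap \Delta$ is also infinite, so that the limit would not be elliptic. This is where $|\Lambda(\Delta)| = \infty$ must be used. A cut with at most $n$ edges separates $\Delta$, which is a coarsely connected set with infinitely many ends. If both $b_\infty \cap \Delta$ and $b_\infty^\ast \cap \Delta$ were infinite, the limit sets of the two pieces would both be nonempty. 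Using loxodromics in $G_H$ from Proposition~\ref{prop:independent-lox} and their dynamics from Proposition~\ref{prop:lox-action}, translate $b_\infty$ by elements of $G_H$ to produce infinitely many distinct cuts of bounded coboundary. Combined with the corresponding approximants $b_i$, which are elliptic, this should contradict the fact that each $b_i$ has only finitely many points of $\Delta$ on one side while staying within bounded coboundary. I expect this step to need the most care. A cleaner alternative would be to show directly that a cut of size $\le n$ can cut off only a finite piece of $\Delta$ of size at most $f(n)$. One would prove this via local finiteness of $\Gamma[\Delta]$ and uniform coarse connectivity, noting that the finite side is a union of at most $n$ finite pieces, each bounded by edges of $\delta b$.

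\textbf{Step 3: conclusion.} Combining the two steps gives $|\delta\widehat b| \le n + 2mn\cdot f(n)$. Setting $\rho(n)$ to be this quantity, replaced by its running maximum, yields an increasing function $\rho$ with the required bound.
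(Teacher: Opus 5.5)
\textbf{There is a genuine gap in Step 2, which is the whole content of the lemma.} Steps 1 and 3 are fine and match the paper's skeleton: the coboundary formula, using thinness to bound the contributing terms, and translating so the offending component is fixed. One small correction: since Step 2 bounds a single component, Step 1 should count pairs $(H,\Delta)$ rather than peripherals. That count is still at most $2mn$.

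Neither route you sketch for Step 2 closes it.
\begin{itemize}
\item The pointwise-limit argument produces no contradiction. A limit of $\mathcal I$-elliptic cuts need not be elliptic. Moreover, a cut of $\Gamma$ with at most $n$ edges splitting $\Delta$ into two infinite pieces is entirely possible, precisely because $\Delta$ has many ends.
\item The limit claim is also unjustified as written. Unless you restrict to a connected piece of $\Delta\cap b_i^\ast$ adjacent to your base edge, the mass of these finite sets can escape to infinity. Then $b_\infty^\ast\cap\Delta$ need not be infinite.
\item The ``cleaner alternative'' is false as stated. Local finiteness and coarse connectivity hold equally when $\Delta$ is a bi-infinite path, and there two edges cut off arbitrarily long finite subpaths.
\end{itemize}
So any correct argument must use that $\Delta$ has more than two ends, and it enters through an isoperimetric statement.

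The missing idea, which is exactly how the paper concludes, runs as follows.
\begin{enumerate}
\item $G_\Delta$ acts on $\Delta$ with finitely many orbits. Thinness and $G$-invariance give finitely many $G_H$-orbits on $H$. An element of $G_H$ sending a vertex of $\Delta$ into $\Delta$ must stabilise $\Delta$. Hence $\Delta$ is a connected, locally finite, quasi-transitive graph.
\item In your contradiction sequence, the sets $A_i=\Delta\cap b_i^\ast$ are finite with $|A_i|\to\infty$. Their edge boundary inside $\Delta$ is contained in $\delta b_i$, so it has at most $n$ edges.
\item A connected, locally finite, quasi-transitive graph containing finite vertex sets of unbounded size with uniformly bounded edge boundary has at most two ends.
\item The inclusion $\Delta\hookrightarrow\Gamma$ is proper, so every end in $\Lambda(\Delta)$ is the image of an end of $\Delta$. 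This gives $|\Lambda(\Delta)|\le 2$, contradicting the standing assumption $|\Lambda(\Delta)|=\infty$.
\end{enumerate}
With this in place, your compactness argument yields the uniform bound $f(n)$, and Step 3 goes through unchanged.
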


    \begin{proof}
        We will prove this using an isoperimetric argument.
        If the claim were false, there would exist a sequence $(b_n)$ in $\br_{\mathcal I}(\Gamma)$, with $|\delta b_n|$ uniformly bounded, such that $|\delta \widehat b_n| \to \infty$. Without loss of generality, we may assume that $|\delta b_n| = |\delta b_m|$ for all $m, n > 0$.

        Note that, given $b \in \br_{\mathcal I}(\Gamma)$, we may compute $|\delta \widehat b|$ via the formula
        $$
        |\delta \widehat b| = |\delta b| + \sum_H |b \cap H| + \sum_{H'} |b^\ast \cap H'|,
        $$
        where the first sum is taken over those $H \in \mathcal I$ such that $b \cap H$ is finite, and the second is taken over those $H' \in \mathcal I$ such that $b^\ast \cap H'$ is finite. 

        Back to our sequence $(b_n)$, we may assume without loss of generality that, say, the first sum $\sum_H |b_n \cap H| \to \infty$ as $n \to \infty$. Since every $H \in \mathcal I$ has big components, we have that for each $n > 0$ there exists a $H_n \in \mathcal I$, and a component $K_n \subset H_n$, such that  $b_n \cap K_n|$ is finite for all $n$, and $|b_n \cap K_n| \to \infty$. Passing to another subsequence, we may assume without loss of generality that all $K_n$ are $G$-orbit equivalent. In particular, by translating we may assume that $K_n = K_m$ for all $m,n > 0$. Let us write $K_n = K$ to simplify notation. Note that $\Delta := \Gamma[K]$ itself is a connected, locally finite, quasi-transitive graph. However, $V(\Delta)$ contains a sequence of arbitrarily large finite subsets $A_n = b_n \cap K$ such that $\delta A_n$ is uniformly bounded. This implies that $\Delta$ is two-ended. In particular, $|\Lambda(K)| \leq 2$, but this contradicts the fact that $|\Lambda(K)| =\infty$. The claim follows.
    \end{proof}

    We will use Lemma~\ref{claim:coboundary-control} to prove the following.

    \begin{lemma}\label{claim:inf-ended-case}
        Suppose $\Gamma$ is accessible relative to $\mathcal I$. Then $\br_{\mathcal I}(\Gamma)$ is a finitely generated $G$-module.
    \end{lemma}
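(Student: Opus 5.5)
The plan is to transfer the problem to the coned-off graph $\widehat\Gamma_{\mathcal I}$ and invoke Corollary~\ref{cor:dicks-dunwoody-gfinite} there. The reduced $\per$ is tame and $\mathcal I\subset\per$. Hence $\mathcal I$ is tame: an $\mathcal I$-elliptic cut meets only finitely many $H\in\mathcal I$ on both sides, by the argument of Lemma~\ref{lem:unbounded-components-tame}, since $\mathcal I$ is thin, $G$-invariant and its members have unbounded coarse components. Also every $H\in\mathcal I$ is infinite. So $F:\br(\widehat\Gamma_{\mathcal I})\to\br_{\mathcal I}(\Gamma)$ is a $G$-equivariant ring isomorphism by Proposition~\ref{prop:ring-isomorphism}. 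Because $\mathcal I/G$ is finite and $\Gamma$ is locally finite and quasi-transitive, $G$ acts on $E(\widehat\Gamma_{\mathcal I})$ with finitely many orbits: the edges at a cone vertex $v_H$ fall into finitely many $G_H$-orbits, since $G_H$ acts on $H$ with finitely many orbits. Note that $\widehat\Gamma_{\mathcal I}$ need not be locally finite, but Corollary~\ref{cor:dicks-dunwoody-gfinite} only needs connectivity and finitely many edge orbits. Thus for each $n$, $\br_n(\widehat\Gamma_{\mathcal I})$ is additively generated by a nested, $G$-invariant, $G$-finite set $E_n$.

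Let $k$ be the accessibility constant for $\Gamma$ relative to $\mathcal I$, and set $N=\rho(k)$, with $\rho$ from Lemma~\ref{claim:coboundary-control}. Put $S=F(E_N)\subset\br_{\mathcal I}(\Gamma)$; it is $G$-invariant, $G$-finite and nested. We check the hypothesis of Proposition~\ref{prop:g-finite-criterion} for $M=\br_{\mathcal I}(\Gamma)$. Suppose ends $\omega_1,\omega_2$ are separated by some $\mathcal I$-elliptic cut. By accessibility they are separated by some $b$ with $|\delta b|\le k$. Then $|\delta\widehat b|\le N$, so $\widehat b\in\br_N(\widehat\Gamma_{\mathcal I})$ and $b=F(\widehat b)$ lies in the additive span of $S$. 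Proposition~\ref{prop:separating-generators} then gives an element of $S$ separating $\omega_1,\omega_2$. Proposition~\ref{prop:g-finite-criterion} yields, for every $b\in\br_{\mathcal I}(\Gamma)$, some $b'\in\langle S\rangle$ with $b+b'$ finite.

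Finite sets lie in $\br_{\mathcal I}(\Gamma)$ and are sums of singletons $\{v\}$, and these singletons form finitely many $G$-orbits. So $S\cup\{\{v\}:v\in V(\Gamma)\}$ is a $G$-finite ring generating set of $\br_{\mathcal I}(\Gamma)$. Corollary~\ref{cor:gen-set-upgrade-rel-cuts} applies, since $\mathcal I$ is tame and $G$-invariant, and gives that $\br_{\mathcal I}(\Gamma)$ is a finitely generated $G$-module. Alternatively, one can enlarge to $E_{N'}$ with $N'$ exceeding the degrees of the $F^{-1}$-images of singletons, which contain no cone vertices.

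The main obstacle is the first step: verifying that $\mathcal I$ inherits tameness and that $F$ is onto, and that the edge action on the non-locally-finite cone-off is $G$-finite. The quantitative transfer itself is exactly what Lemma~\ref{claim:coboundary-control} provides.
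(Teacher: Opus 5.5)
Your proposal is correct and is essentially the paper's argument with the steps in a different order. The paper first uses accessibility and Proposition~\ref{prop:g-finite-criterion} to get a ring-generating set of $\br_{\mathcal I}(\Gamma)$ with bounded coboundaries. It lifts this via $F^{-1}$ and Lemma~\ref{claim:coboundary-control} to conclude $\br_n(\widehat\Gamma_{\mathcal I})=\br(\widehat\Gamma_{\mathcal I})$, and then pushes forward the $G$-finite nested generating set from Corollary~\ref{cor:dicks-dunwoody-gfinite}. You instead push $E_{\rho(k)}$ forward first and finish with Propositions~\ref{prop:separating-generators}, \ref{prop:g-finite-criterion} and Corollary~\ref{cor:gen-set-upgrade-rel-cuts}; your explicit checks that $\mathcal I$ is tame (so $F$ is an isomorphism) and your singleton fix for the finite error term fill in points the paper leaves implicit.
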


    \begin{proof}
        By relative accessibility, quasi-transitivity, and Proposition~\ref{prop:g-finite-criterion}, let $S \subset \br_{\mathcal I}(\Gamma)$ be a subset with the following properties:
        \begin{enumerate}
            \item $|\delta b|$ is uniformly bounded across all $b \in S$.

            \item $S$ generates $ \br_{\mathcal I}(\Gamma)$ as a ring.
        \end{enumerate}
        Consider the lift $\widehat S := F^{-1}(S)$. We have that $\widehat S$ generates $\br(\widehat\Gamma_{\mathcal I})$ as a ring. By Lemma~\ref{claim:coboundary-control}, $\widehat S \subset  \br_n(\widehat\Gamma_{\mathcal I})$ for some finite $n > 0$, and so we deduce that
        $$
        \br_n(\widehat\Gamma_{\mathcal I}) =  \br(\widehat\Gamma_{\mathcal I}).
        $$
        Since $\mathcal I$ is thin and the action of $G$ on $\Gamma$ is quasi-transitive, we have that $G$ acts on $E(\widehat \Gamma_{\mathcal I})$ with finitely many orbits. By Corollary~\ref{cor:dicks-dunwoody-gfinite}, it follows that  $\br_n(\widehat\Gamma_{\mathcal I}) =  \br(\widehat\Gamma_{\mathcal I})$ contains a nested, $G$-invariant, $G$-finite additive generating set $S'$. Taking $S'' = F(S')$, we conclude that $\br_{\mathcal I}(\Gamma)$ is a finitely generated $G$-module. 
    \end{proof}

    This essentially completes the proof in the case where $\mathcal T = \emptyset$. However, in the presence of two-ended peripherals there is still some work to be done.
    We now have the following three claims.

    \begin{lemma}\label{claim:acc-H-implies-acc-I}
        If $\Gamma$ is accessible relative to $\per$, then $\Gamma$ is accessible relative to $\mathcal I$.
    \end{lemma}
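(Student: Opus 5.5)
The plan is to compare the two rings of elliptic cuts and split into cases using the dichotomy of Proposition~\ref{prop:end-separating-dichotomy}. As in the standing assumptions, $\per = \mathcal I \sqcup \mathcal T$ is tame, minimised and separable. Since $\mathcal I \subset \per$, every $\per$-elliptic cut is $\mathcal I$-elliptic, so $\br_\per(\Gamma) \subseteq \br_{\mathcal I}(\Gamma)$. Let $k$ be the constant witnessing accessibility of $\Gamma$ relative to $\per$. Fix distinct ends $\omega_1, \omega_2$ that are separated by some $\mathcal I$-elliptic cut $b$. We seek an $\mathcal I$-elliptic cut separating them whose coboundary size is bounded independently of the $\omega_i$.

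\emph{First case: some $\per$-elliptic cut separates $\omega_1$ and $\omega_2$.} Relative accessibility gives a $\per$-elliptic cut $b'$ separating them with $|\delta b'| \leq k$. This $b'$ is also $\mathcal I$-elliptic, so this case is done.

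\emph{Second case: no $\per$-elliptic cut separates them.} By Proposition~\ref{prop:end-separating-dichotomy} there is $H \in \per$ with $\omega_1, \omega_2 \in \Lambda(H)$. We first show $H \notin \mathcal I$. Indeed, suppose $H \in \mathcal I$, and take a sequence in $H$ converging to $\omega_1$ and another converging to $\omega_2$. Since $b$ separates the $\omega_i$, these sequences eventually lie in $b$ and $b^\ast$ respectively. Then both $b \cap H$ and $b^\ast \cap H$ are infinite, contradicting the $\mathcal I$-ellipticity of $b$. Hence $H \in \mathcal T$. Since $|\Lambda(H)| = 2$, this forces $\Lambda(H) = \{\omega_1, \omega_2\}$. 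So the only remaining pairs of ends are the two ends of a two-ended peripheral $H \in \mathcal T$, and only those pairs for which some $\mathcal I$-elliptic cut separates them.

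Such pairs are handled by finiteness and equivariance. Since $\per$ is thin and $G$-invariant, $\per/G$ is finite, so $\mathcal T$ has finitely many $G$-orbits, with representatives $H_1, \ldots, H_m$. For each $j$ whose two ends are separated by some $\mathcal I$-elliptic cut, fix one such cut $c_j$. For $g \in G$ we have $\Lambda(gH_j) = g\Lambda(H_j)$, and $gc_j$ is again $\mathcal I$-elliptic because $\mathcal I$ is $G$-invariant. Moreover $gc_j$ separates the two ends of $gH_j$ and satisfies $|\delta(gc_j)| = |\delta c_j|$. Now suppose the ends of $gH_j$ are separated by some $\mathcal I$-elliptic cut $d$. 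Then $g^{-1}d$ separates the ends of $H_j$, so $c_j$ exists, and $gc_j$ is the required cut. Therefore
$$
k' := \max\big(k, \max_j |\delta c_j|\big)
$$
witnesses accessibility of $\Gamma$ relative to $\mathcal I$.

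The step I expect to need the most care is the second case: using the dichotomy to rule out $H \in \mathcal I$ and to identify $\{\omega_1, \omega_2\}$ exactly as the limit set of a two-ended peripheral. Once that is established, the bound follows from the finiteness of $\mathcal T/G$ alone.
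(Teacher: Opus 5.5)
Your proof is correct and follows essentially the same route as the paper: use the $\per$-bound and the dichotomy of Proposition~\ref{prop:end-separating-dichotomy} to reduce to pairs of ends forming $\Lambda(H)$ for some $H \in \mathcal T$, then add the $G$-translates of finitely many $\mathcal I$-elliptic cuts chosen for orbit representatives of $\mathcal T$. The only difference is minor and slightly cleaner on your side: you fix $c_j$ only when such a cut exists and explicitly rule out $H \in \mathcal I$. The paper instead asserts that each $b_i$ exists by applying the dichotomy to $\mathcal I$ itself, which tacitly uses that $\mathcal I$ is tame and separable.
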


    \begin{proof}
        Let $S \subset \br_\per(\Gamma)$ satisfy the following:
        \begin{enumerate}
            \item There exists $k > 0$ such that $|\delta b| < k$ for all $b \in S$.

            \item For all distinct $\omega_1, \omega_2 \in \Omega(\Gamma)$, either there exists $H \in \per$ such that $\omega_1, \omega_2 \in \Lambda(H)$, or there is some $b \in S$ which separates $\omega_1$ and $\omega_2$. 
        \end{enumerate}
        The existence of such an $S$ is guaranteed by the fact that $\Gamma$ is accessible relative to $\per$, and Proposition~\ref{prop:end-separating-dichotomy}. Let $H_1, \ldots, H_l \in \mathcal T$ be orbit representatives, so $|\Lambda(H_i)|=2$ for each $1 \leq i \leq l$. Again, by Proposition~\ref{prop:end-separating-dichotomy}, for each such $i$ there exists an $\mathcal I$-elliptic cut $b_i \in \br_{\mathcal I}(\Gamma)$ such that $b_i$ separates the two ends in $\Lambda(H_i)$. 
        We now augment $S$ via
        $$
        S' := S \cup \{g b_i : g \in G, 1 \leq i \leq l\}. 
        $$
        We have that any two ends in $\Gamma$ are either contained in the limit set of some $K \in \mathcal I$, or are separated by some $b \in S'$. Moreover, the size of the coboundaries of elements of $S'$ is certainly uniformly bounded. It follows that $\Gamma$ is accessible relative to $\mathcal I$. 
    \end{proof}

The next two lemmas are relatively technical.

    \begin{lemma}\label{claim:finding-the-special-cut}
        Let $H \in \mathcal T$. Let $U \supset \Lambda(H)$ be an open neighbourhood of the limit set of $H$ in $\Omega(\Gamma)$. Then there exists $b \in \br_\per(\Gamma)$ such that for all $\omega \in \Omega(\Gamma) \setminus U$, we have that $b$ separates $\omega$ from $\Lambda(H)$. 
    \end{lemma}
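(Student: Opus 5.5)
Write $\Lambda(H)=\{\lambda_+,\lambda_-\}$. The plan is to combine the compactness argument of Proposition~\ref{prop:g-finite-criterion} with the dichotomy of Proposition~\ref{prop:end-separating-dichotomy}. Recall that $\per$ is tame, minimised and separable, so Proposition~\ref{prop:end-separating-dichotomy} applies.

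First, I claim that every $\omega\in\Omega(\Gamma)\setminus\Lambda(H)$ can be separated from both $\lambda_\pm$ simultaneously by a single elliptic cut. The proof of Proposition~\ref{prop:end-separating-dichotomy} does this directly. One takes a clopen $W\ni\omega$ disjoint from $\Lambda(H)$ and a cut $b$ with $\Lambda(b^\ast)=W$. Tameness leaves only finitely many $H_j$ meeting both sides infinitely. Each such $H_j$ is cut off from $H$ using separability, provided $H_j\neq H$. Intersecting gives $a_\omega\in\br_\per(\Gamma)$ with $\Lambda(H)\subset\Lambda(a_\omega)$ and $\omega\in\Lambda(a_\omega^\ast)$.

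One point to check is that $H$ itself is not among the $H_j$. This holds because $\Lambda(H)\subset\Lambda(b)$ and $b$ is a cut, so $H\cap b^\ast$ has no accumulation points in $\Omega(\Gamma)$. By local finiteness, an infinite subset of $V(\Gamma)$ always accumulates at some end, so $H\cap b^\ast$ is finite. Hence $H$ never meets $b^\ast$ infinitely and is not one of the $H_j$.

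Next comes the compactness step. The set $\Omega(\Gamma)\setminus U$ is closed, hence compact, and it is covered by the open sets $\Lambda(a_\omega^\ast)$ as $\omega$ ranges over it. Extract a finite subcover $\omega_1,\dots,\omega_m$ and set
$$
b:=\bigcap_{i=1}^m a_{\omega_i}.
$$
This is $\per$-elliptic by Proposition~\ref{prop:subring}. Since $\Lambda$ of a finite intersection of cuts is the intersection of the limit sets, $\Lambda(H)\subset\Lambda(b)$. Moreover each $\omega\notin U$ lies in some $\Lambda(a_{\omega_i}^\ast)\subset\Lambda(b^\ast)$. So $b$ separates every such $\omega$ from $\Lambda(H)$. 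If $\Omega(\Gamma)\setminus U$ is empty, take $b=V(\Gamma)$ instead.

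The main obstacle is the first step: producing a single cut that puts both ends of $H$ on one side. Separating $\omega$ from each $\lambda_\pm$ individually and intersecting the two resulting cuts (after orienting them suitably) would not do this. Instead I rely on the internal construction in the proof of Proposition~\ref{prop:end-separating-dichotomy}, which separates $\omega$ from all of $\Lambda(H)$ at once, rather than on its statement alone. That construction must also be rechecked in the case where $\omega$ lies in the limit set of another peripheral $H'$. Here separability gives a cut $c$ with $H\cap c^\ast$ finite and $H'\cap c$ finite, and then $\Lambda(H)\subset\Lambda(c)$ and $\omega\in\Lambda(c^\ast)$, as required.
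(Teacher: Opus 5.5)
Your proof is the paper's proof up to complementation. The paper takes, for each $p\in\Omega(\Gamma)\setminus U$, an elliptic $b_p$ with $p\in\Lambda(b_p)$ and $\Lambda(H)\subset\Lambda(b_p^\ast)$. It then extracts a finite subcover of $U\cup\bigcup_p\Lambda(b_p)$ and takes $\bigcup_i b_{p_i}$, which is the complement of your $\bigcap_i a_{\omega_i}$.

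However, the claim in your last paragraph is wrong: separating $\omega$ from each $\lambda_\pm$ separately and intersecting does work.

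\begin{itemize}
\item For $\omega\notin U$, alternative (2) of Proposition~\ref{prop:end-separating-dichotomy} cannot hold for the pair $\omega,\lambda_\pm$. A peripheral $H'$ with $\omega,\lambda_\pm\in\Lambda(H')$ would differ from $H$, since $\omega\notin\Lambda(H)$. It would then be distinguishable from $H$, contradicting Proposition~\ref{prop:distinguishable-disjointlimset} because $\lambda_\pm$ lies in both limit sets.
\item So there are elliptic $c_\pm$ with $\omega\in\Lambda(c_\pm)$ and $\lambda_\pm\in\Lambda(c_\pm^\ast)$.
\item Then $\omega\in\Lambda(c_+)\cap\Lambda(c_-)=\Lambda(c_+\cap c_-)$, and both $\lambda_\pm$ lie in $\Lambda(c_+^\ast)\cup\Lambda(c_-^\ast)=\Lambda((c_+\cap c_-)^\ast)$. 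This is the same identity you use in your compactness step.
\end{itemize}

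So Proposition~\ref{prop:end-separating-dichotomy} can be used purely through its statement, which is effectively what the paper does. Re-running its internal construction is unnecessary. It also makes you responsible for the step ``tameness leaves only finitely many $H_j$''. There, tameness is applied to a cut $b$ that is not assumed elliptic, whereas the definition of tameness only concerns elliptic cuts.
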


    \begin{proof}
        For each $p \in \Omega(\Gamma)\setminus U$, we have by Proposition~\ref{prop:end-separating-dichotomy} that there exists $b_p \in \br_\per(\Gamma)$ such that $p \in \Lambda(b_p) =: W_p$ and $\Lambda(H) \subset \Lambda(b^\ast_p)$. Note that $W_p$ is clopen, and so
        $$
        U \cup \bigcup_p W_p
        $$
        is an open cover of $\Omega(\Gamma)$. Since $\Omega(\Gamma)$ is compact, there exists a finite subcover, say
        $$
        U \cup \bigcup_{i=1}^n W_i,
        $$
        where $W_i := W_{p_i}$ for some $p_i \in \Omega(\Gamma) \setminus U$. Write $b_i := b_{p_i}$, and let
        $$
        b = \bigg(\bigcap_i b_i^\ast\bigg)^\ast = \bigcup_i b_i.
        $$
        Note that $b \in \br_\per(\Gamma)$  since $\br_\per(\Gamma)$ is a subring of $\br(\Gamma)$ by Proposition~\ref{prop:subring}. It is immediate that $b$ satisfies our requirements. 
    \end{proof}

    \begin{lemma}\label{claim:technical-boundary-claim}
        Let $H_1, \ldots, H_k \in \mathcal T$, with $\{\lambda_{j,1}, \lambda_{j,2}\} = \Lambda(H_j)$. Let $W_1 \sqcup W_2 = \Omega(\Gamma)$ be a bipartition into clopen sets, such that $\lambda_{j,i} \in W_i$ for all $j$, $i$. Then there exists pairwise disjoint, open neighbourhoods $U_{j,i} \ni \lambda_{j,i}$ in $\Omega(\Gamma)$, with $U_{j,i} \subset W_i$, such that the following holds:
            
            For all $i \in \{1,2\}$, $j \in \{1,\ldots, k\}$, $p \in U_{j,i} \setminus \Lambda(H_j)$, there exists $g \in G_{H_j}$ such that:
            \begin{enumerate}
                \item\label{itm:dynamics-property-1} $gp \in W_i \setminus \bigcup _\ell U_{\ell, i}$, and

                \item\label{itm:dynamics-property-2} $gU_{j,i+1} \cap W_i= \emptyset$,
            \end{enumerate}
            where addition within the indices is taken modulo 2.
    \end{lemma}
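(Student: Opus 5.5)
The idea is to use the dynamics of a loxodromic element in each stabiliser $G_{H_j}$ (Proposition~\ref{prop:lox-action}) to build, around each $\lambda_{j,i}$, a nested family of clopen sets contracted by a single element $s_j \in G_{H_j}$. The sets $U_{j,i}$ are then taken deep inside these families, and $g$ is a suitable power of $s_j^{\pm1}$ that carries $p$ back into a ``fundamental annulus'' lying outside all the $U_{\ell,i}$. The argument is symmetric in $i$, so it suffices to treat $i=1$.

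\emph{Step 1: points and loxodromics.} We may assume the $H_j$ are distinct. Then $\per$ separable and Proposition~\ref{prop:distinguishable-disjointlimset} give that the $2k$ ends $\lambda_{j,i}$ are pairwise distinct. Since $\per$ is thin and $G$-invariant and $\Gamma$ is quasi-transitive, $G_{H_j}$ acts on $H_j$ with finitely many orbits, so $\Lambda(G_{H_j})=\Lambda(H_j)$ has two points. Proposition~\ref{prop:independent-lox} then gives a loxodromic $t_j\in G_{H_j}$ with $\Lambda(t_j)=\Lambda(H_j)$; after inverting, $t_j^{n}x\to\lambda_{j,1}$ and $t_j^{-n}x\to\lambda_{j,2}$. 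Since $\Omega(\Gamma)$ is compact, metrisable and totally disconnected, we choose pairwise disjoint clopen sets $A_{j,i}\ni\lambda_{j,i}$ with $A_{j,i}\subset W_i$.

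\emph{Step 2: contracting clopen sets.} Fix $j$. The set $\Omega\setminus A_{j,2}$ is compact and avoids $\lambda_{j,2}$, and $\Omega\setminus A_{j,1}$ is compact and avoids $\lambda_{j,1}$. By Proposition~\ref{prop:lox-action} there is $N$ with
\[
t_j^{N}(\Omega\setminus A_{j,2})\subset A_{j,1}, \qquad t_j^{-N}(\Omega\setminus A_{j,1})\subset A_{j,2}.
\]
Set $s_j=t_j^N\in G_{H_j}$ and $V_{j,1}:=s_j(\Omega\setminus A_{j,2})$, a clopen subset of $A_{j,1}$. Since $V_{j,1}\subset A_{j,1}\subset \Omega\setminus A_{j,2}$, we get $s_jV_{j,1}\subset V_{j,1}$. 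Symmetrically, $V_{j,2}:=s_j^{-1}(\Omega\setminus A_{j,1})\subset A_{j,2}$ is clopen with $s_j^{-1}V_{j,2}\subset V_{j,2}$. Moreover $\bigcap_{n\ge 0}s_j^nV_{j,1}=\{\lambda_{j,1}\}$; to see this, apply Proposition~\ref{prop:lox-action} to the compact set $V_{j,1}$ and arbitrarily small neighbourhoods of $\lambda_{j,1}$.

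\emph{Step 3: choice of $U_{j,i}$ and of $g$.} Take $U_{j,1}:=s_j^{m}V_{j,1}$ and $U_{j,2}:=s_j^{-m}V_{j,2}$ for some $m\ge1$. These are open, pairwise disjoint, and satisfy $U_{j,i}\subset A_{j,i}\subset W_i$. Let $p\in U_{j,1}\setminus\Lambda(H_j)$. Since $p\neq\lambda_{j,1}$ and the sets $s_j^nV_{j,1}$ decrease to $\{\lambda_{j,1}\}$, there is a unique $n\ge m$ with $p\in s_j^nV_{j,1}\setminus s_j^{n+1}V_{j,1}$. Put $g=s_j^{-n}\in G_{H_j}$.

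For property (\ref{itm:dynamics-property-1}), note that $gp\in V_{j,1}\setminus s_jV_{j,1}$. This set lies in $A_{j,1}\subset W_1$, so it is disjoint from $U_{\ell,1}\subset A_{\ell,1}$ for $\ell\ne j$, and it is disjoint from $U_{j,1}=s_j^mV_{j,1}\subset s_jV_{j,1}$.

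For property (\ref{itm:dynamics-property-2}), the contraction $s_j^{-1}V_{j,2}\subset V_{j,2}$ gives $gU_{j,2}=s_j^{-n-m}V_{j,2}\subset V_{j,2}\subset A_{j,2}\subset W_2$, which is disjoint from $W_1$.

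The case $i=2$ is identical with $s_j$ replaced by $s_j^{-1}$ and $g=s_j^{n}$.

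The main obstacle is Step 2: producing a single element whose powers contract clopen neighbourhoods of both fixed ends, in opposite directions. Proposition~\ref{prop:lox-action} only gives ``eventual'' containment for each compact set separately. The fix is to pass to a sufficiently high power $s_j$ that handles both complements $\Omega\setminus A_{j,1}$ and $\Omega\setminus A_{j,2}$ at once, and then to check that the resulting $V_{j,i}$ really are forward-invariant and shrink to the fixed end.
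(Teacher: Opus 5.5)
Your proof is correct. It uses the same basic mechanism as the paper: a loxodromic $t_j\in G_{H_j}$ with $\Lambda(t_j)=\Lambda(H_j)$, and $g$ a power of $t_j^{\mp1}$ chosen by a first-exit argument. The organisation is different, though.

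The paper works with $t_j$ itself. It takes $U_{j,1}\subset Z_j\cap t_jZ_j$, so that the first $t_j^{-1}$-iterate of $p$ to leave $U_{j,1}$ lands in $t_j^{-1}U_{j,1}\subset Z_j$. For (2), it treats the finitely many exponents $m\le M_j$ by shrinking $U_{j,2}$ into the neighbourhoods $Y_j^{(m)}$, and the large exponents by applying Proposition~\ref{prop:lox-action} to $W_2$. It then reruns the construction with the roles of $i=1,2$ swapped and intersects the two families, using that shrinking preserves both properties.

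You instead pass to a power $s_j=t_j^N$ satisfying both ping-pong inclusions, which produces nested clopen sets $V_{j,1}\supset s_jV_{j,1}\supset\cdots$ and $V_{j,2}\supset s_j^{-1}V_{j,2}\supset\cdots$. This buys you three things:
\begin{enumerate}
\item the annulus $V_{j,1}\setminus s_jV_{j,1}$ plays the role of the paper's first-exit region;
\item property (2) holds for every power at once by forward invariance, so small exponents need no separate treatment;
\item a single choice of $U_{j,i}$ serves both values of $i$, so no intersection step is needed.
\end{enumerate}
The cost is only the extra check that such an $N$ exists and that the iterates shrink to $\lambda_{j,1}$, which you supply.

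Your write-up also makes explicit two points the paper leaves implicit. First, the orientation of $t_j$ matters: one needs $t_j^nx\to\lambda_{j,1}$ so that negative powers actually push $p$ out of $U_{j,1}$. Second, the $\lambda_{j,i}$ are pairwise distinct, via separability and Proposition~\ref{prop:distinguishable-disjointlimset}.

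One small correction: distinctness of the $H_j$ is not a reduction you may make but an implicit hypothesis of the lemma. If $H_1=H_2$, disjoint neighbourhoods $U_{1,1}\ni\lambda_{1,1}$ and $U_{2,1}\ni\lambda_{2,1}$ cannot exist. The hypothesis does hold where the lemma is applied.
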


    \begin{proof}
        We will focus on the case $i = 1$, then deal with $i = 2$ via a symmetry argument at the end. 
        First, note that the $\lambda_{j,1}$ are pairwise distinct. Thus, we may choose for each of them an open neighbourhood 
        $$
        \lambda_{j,1} \in Z_j \subset W_1,
        $$
        such that the $Z_j$ are pairwise disjoint. Since $|\Lambda(H_j)| = 2$, by Proposition~\ref{prop:independent-lox} we have that for each $j$ there exists a loxodromic element $t_j \in G_{H_j}$ such that $\Lambda(t_j) = \Lambda(H_j)$. In particular, we have that $t_j$ fixes each of the $\lambda_{j,i}$. 
        Given $j \in \{1, \ldots, k\}$, choose $U_{j,1} \ni \lambda_{j,1}$ such that
        $$
        U_{j,1} \subset Z_{j} \cap t_jZ_{j}.
        $$
        Note that such a $U_{j,1}$ exists since $Z_j$ is open and $\lambda_{j,1} \in Z_{j} \cap t_jZ_{j}$. 
        In particular, this implies that $t_j^{-1}U_{j,1} \subset Z_j$. We may choose such a $U_{j,1}$ since $t_j$ fixes $\Lambda(H_j)$ pointwise. Note that with these choices of $U_{j,1}$, we have already satisfied (\ref{itm:dynamics-property-1}) in the $i=1$ case, by taking $g$ to be some negative power of $t_j$. Moreover, if we replace $U_{j,1}$ with some sub-neighbourhood $ \lambda_{j,1} \in U'_{j,1} \subset U_{j,1}$, then this property is preserved. 

        By Proposition~\ref{prop:lox-action}, we have that there exists $M_j > 0$ such that for all $m > M_j$, we have $t_j^{-m}(W_2) \subset W_2$. 
        Now, for each $m \in \{1, \ldots, M_j\}$, there exists an open neighbourhood $Y_j^{(m)} \ni \lambda_{j,2}$ such that $t_j^{-m}(Y_j^{(m)}) \subset W_2$. Let 
        $$
        U_{j,2} = \bigcap_{m=1}^{M_j} Y_j^{(m)}.
        $$
        It is clear now that $t_j^{-m}(U_{j,2}) \subset W_2$ for all $m > 0$. This verifies (\ref{itm:dynamics-property-2}) in the $i=1$ case. Note again that shrinking the $U_{j,2}$ preserves this property. 

        To deal with the $i=2$ case, we apply the same argument with the roles reserved, and obtain $U'_{j,i} \ni \lambda_{j,i}$. We now replace $U_{j,i} \mapsto U_{j,i} \cap U_{j,i}'$. It is immediate that these choices of $U_{j,i}$ satisfy the claim.
    \end{proof}

    \begin{figure}
         \centering
        \tikzset{every picture/.style={line width=0.75pt}} 

\begin{tikzpicture}[x=0.75pt,y=0.75pt,yscale=-1,xscale=1]

\draw  [color={rgb, 255:red, 155; green, 155; blue, 155 }  ,draw opacity=1 ][fill={rgb, 255:red, 231; green, 231; blue, 231 }  ,fill opacity=1 ] (289.08,151.46) .. controls (289.08,145.04) and (294.29,139.83) .. (300.71,139.83) .. controls (307.13,139.83) and (312.33,145.04) .. (312.33,151.46) .. controls (312.33,157.88) and (307.13,163.08) .. (300.71,163.08) .. controls (294.29,163.08) and (289.08,157.88) .. (289.08,151.46) -- cycle ;
\draw  [color={rgb, 255:red, 208; green, 2; blue, 27 }  ,draw opacity=1 ][fill={rgb, 255:red, 250; green, 219; blue, 219 }  ,fill opacity=1 ] (283.08,108.33) .. controls (283.08,96.04) and (293.04,86.08) .. (305.33,86.08) .. controls (317.62,86.08) and (327.58,96.04) .. (327.58,108.33) .. controls (327.58,120.62) and (317.62,130.58) .. (305.33,130.58) .. controls (293.04,130.58) and (283.08,120.62) .. (283.08,108.33) -- cycle ;
\draw  [color={rgb, 255:red, 144; green, 19; blue, 254 }  ,draw opacity=1 ][fill={rgb, 255:red, 198; green, 143; blue, 244 }  ,fill opacity=0.4 ] (298.33,94) .. controls (318.33,84) and (327.58,90.25) .. (325.33,103.75) .. controls (323.08,117.25) and (306.58,113.75) .. (310.08,128.25) .. controls (313.58,142.75) and (294.33,151.75) .. (294.33,132.25) .. controls (294.33,112.75) and (278.33,104) .. (298.33,94) -- cycle ;
\draw  [color={rgb, 255:red, 155; green, 155; blue, 155 }  ,draw opacity=1 ][fill={rgb, 255:red, 231; green, 231; blue, 231 }  ,fill opacity=1 ] (310.08,64.96) .. controls (310.08,55.91) and (317.41,48.58) .. (326.46,48.58) .. controls (335.5,48.58) and (342.83,55.91) .. (342.83,64.96) .. controls (342.83,74) and (335.5,81.33) .. (326.46,81.33) .. controls (317.41,81.33) and (310.08,74) .. (310.08,64.96) -- cycle ;
\draw  [color={rgb, 255:red, 155; green, 155; blue, 155 }  ,draw opacity=1 ][fill={rgb, 255:red, 231; green, 231; blue, 231 }  ,fill opacity=1 ] (406.38,92.42) .. controls (406.38,85.08) and (412.33,79.13) .. (419.67,79.13) .. controls (427.01,79.13) and (432.96,85.08) .. (432.96,92.42) .. controls (432.96,99.76) and (427.01,105.71) .. (419.67,105.71) .. controls (412.33,105.71) and (406.38,99.76) .. (406.38,92.42) -- cycle ;
\draw  [color={rgb, 255:red, 155; green, 155; blue, 155 }  ,draw opacity=1 ][fill={rgb, 255:red, 231; green, 231; blue, 231 }  ,fill opacity=1 ] (398.72,160.54) .. controls (398.72,141.59) and (414.09,126.22) .. (433.04,126.22) .. controls (452,126.22) and (467.36,141.59) .. (467.36,160.54) .. controls (467.36,179.5) and (452,194.86) .. (433.04,194.86) .. controls (414.09,194.86) and (398.72,179.5) .. (398.72,160.54) -- cycle ;
\draw  [color={rgb, 255:red, 155; green, 155; blue, 155 }  ,draw opacity=1 ][fill={rgb, 255:red, 231; green, 231; blue, 231 }  ,fill opacity=1 ] (359.22,204.04) .. controls (359.22,196.13) and (365.63,189.72) .. (373.54,189.72) .. controls (381.45,189.72) and (387.86,196.13) .. (387.86,204.04) .. controls (387.86,211.95) and (381.45,218.36) .. (373.54,218.36) .. controls (365.63,218.36) and (359.22,211.95) .. (359.22,204.04) -- cycle ;
\draw [color={rgb, 255:red, 74; green, 74; blue, 74 }  ,draw opacity=1 ]   (305.33,108.33) .. controls (357.58,113.75) and (389.58,129.75) .. (433.04,160.54) ;
\draw [color={rgb, 255:red, 74; green, 74; blue, 74 }  ,draw opacity=1 ]   (326.46,64.96) .. controls (349.58,90.25) and (372.08,107.75) .. (419.67,92.42) ;
\draw [color={rgb, 255:red, 74; green, 74; blue, 74 }  ,draw opacity=1 ]   (300.71,151.46) .. controls (340.58,137.75) and (375.58,151.75) .. (373.54,204.04) ;
\draw [color={rgb, 255:red, 128; green, 128; blue, 128 }  ,draw opacity=1 ] [dash pattern={on 3.75pt off 1.5pt}]  (298.08,206.25) .. controls (338.08,176.25) and (369.08,97.75) .. (370.33,58.25) ;
\draw    (373.54,204.04) ;
\draw [shift={(373.54,204.04)}, rotate = 0] [color={rgb, 255:red, 0; green, 0; blue, 0 }  ][fill={rgb, 255:red, 0; green, 0; blue, 0 }  ][line width=0.75]      (0, 0) circle [x radius= 3.35, y radius= 3.35]   ;
\draw    (433.04,160.54) ;
\draw [shift={(433.04,160.54)}, rotate = 0] [color={rgb, 255:red, 0; green, 0; blue, 0 }  ][fill={rgb, 255:red, 0; green, 0; blue, 0 }  ][line width=0.75]      (0, 0) circle [x radius= 3.35, y radius= 3.35]   ;
\draw    (419.67,92.42) ;
\draw [shift={(419.67,92.42)}, rotate = 0] [color={rgb, 255:red, 0; green, 0; blue, 0 }  ][fill={rgb, 255:red, 0; green, 0; blue, 0 }  ][line width=0.75]      (0, 0) circle [x radius= 3.35, y radius= 3.35]   ;
\draw    (305.33,108.33) ;
\draw [shift={(305.33,108.33)}, rotate = 0] [color={rgb, 255:red, 0; green, 0; blue, 0 }  ][fill={rgb, 255:red, 0; green, 0; blue, 0 }  ][line width=0.75]      (0, 0) circle [x radius= 3.35, y radius= 3.35]   ;
\draw    (326.46,64.96) ;
\draw [shift={(326.46,64.96)}, rotate = 0] [color={rgb, 255:red, 0; green, 0; blue, 0 }  ][fill={rgb, 255:red, 0; green, 0; blue, 0 }  ][line width=0.75]      (0, 0) circle [x radius= 3.35, y radius= 3.35]   ;
\draw    (300.71,151.46) ;
\draw [shift={(300.71,151.46)}, rotate = 0] [color={rgb, 255:red, 0; green, 0; blue, 0 }  ][fill={rgb, 255:red, 0; green, 0; blue, 0 }  ][line width=0.75]      (0, 0) circle [x radius= 3.35, y radius= 3.35]   ;
\draw [color={rgb, 255:red, 208; green, 2; blue, 27 }  ,draw opacity=1 ]   (433.58,146.58) ;
\draw [shift={(433.58,146.58)}, rotate = 0] [color={rgb, 255:red, 208; green, 2; blue, 27 }  ,draw opacity=1 ][fill={rgb, 255:red, 208; green, 2; blue, 27 }  ,fill opacity=1 ][line width=0.75]      (0, 0) circle [x radius= 3.35, y radius= 3.35]   ;
\draw [color={rgb, 255:red, 144; green, 19; blue, 254 }  ,draw opacity=1 ]   (375.08,140.83) ;
\draw [shift={(375.08,140.83)}, rotate = 0] [color={rgb, 255:red, 144; green, 19; blue, 254 }  ,draw opacity=1 ][fill={rgb, 255:red, 144; green, 19; blue, 254 }  ,fill opacity=1 ][line width=0.75]      (0, 0) circle [x radius= 3.35, y radius= 3.35]   ;
\draw    (424.08,146.25) .. controls (410.64,146.25) and (398.36,145.79) .. (383.02,141.99) ;
\draw [shift={(381.08,141.5)}, rotate = 14.66] [color={rgb, 255:red, 0; green, 0; blue, 0 }  ][line width=0.75]    (4.37,-1.32) .. controls (2.78,-0.56) and (1.32,-0.12) .. (0,0) .. controls (1.32,0.12) and (2.78,0.56) .. (4.37,1.32)   ;

\draw (308.75,203.65) node [anchor=north west][inner sep=0.75pt]  [font=\scriptsize,color={rgb, 255:red, 128; green, 128; blue, 128 }  ,opacity=1 ]  {$W_{1}$};
\draw (287.25,185.9) node [anchor=north west][inner sep=0.75pt]  [font=\scriptsize,color={rgb, 255:red, 128; green, 128; blue, 128 }  ,opacity=1 ]  {$W_{2}$};
\draw (439,140.9) node [anchor=north west][inner sep=0.75pt]  [font=\footnotesize,color={rgb, 255:red, 208; green, 2; blue, 27 }  ,opacity=1 ]  {$p$};
\draw (367,145.4) node [anchor=north west][inner sep=0.75pt]  [font=\footnotesize,color={rgb, 255:red, 144; green, 19; blue, 254 }  ,opacity=1 ]  {$gp$};
\draw (260.75,102.9) node [anchor=north west][inner sep=0.75pt]  [font=\scriptsize,color={rgb, 255:red, 208; green, 2; blue, 27 }  ,opacity=1 ]  {$U_{j,2}$};
\draw (328.5,93.9) node [anchor=north west][inner sep=0.75pt]  [font=\scriptsize,color={rgb, 255:red, 144; green, 19; blue, 254 }  ,opacity=1 ]  {$gU_{j,2}$};
\draw (421.75,179.15) node [anchor=north west][inner sep=0.75pt]  [font=\scriptsize,color={rgb, 255:red, 128; green, 128; blue, 128 }  ,opacity=1 ]  {$U_{j,1}$};

\end{tikzpicture}
        \caption{Illustration of Lemma~\ref{claim:technical-boundary-claim}.}
        \label{fig:technical-boundary-claim}
    \end{figure}

    See Figure~\ref{fig:technical-boundary-claim} for a cartoon of Lemma~\ref{claim:technical-boundary-claim}.
    We will now use Lemmas~\ref{claim:finding-the-special-cut} and \ref{claim:technical-boundary-claim} to prove the following, which shows that we were well within our rights to ignore the two-ended peripherals.

    \begin{lemma}\label{claim:I-fg-implies-H-fg}
        If $\br_{\mathcal I}(\Gamma)$ is a finitely generated $G$-module, then so is $\br_\per(\Gamma)$. 
    \end{lemma}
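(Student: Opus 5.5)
The plan is to produce a $G$-finite set $S \subset \br_\per(\Gamma)$ such that any two ends separated by some $\per$-elliptic cut are separated by an element of $S$. We then add the $G$-finite family of singletons $\{v\}$, $v \in V(\Gamma)$; these are elliptic and their coboundaries are bounded by the maximum degree. By Proposition~\ref{prop:g-finite-criterion}, $S$ together with the singletons generates $\br_\per(\Gamma)$ as a ring. Corollary~\ref{cor:gen-set-upgrade-rel-cuts} then upgrades this to $\br_\per(\Gamma)$ being a finitely generated $G$-module, using that $\per$ is tame.

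\textbf{Step 1: start from generators of $\br_{\mathcal I}(\Gamma)$ and count the bad peripherals.} Let $c_1,\dots,c_m$ be representatives of a $G$-finite additive generating set of $\br_{\mathcal I}(\Gamma)$. Since $\mathcal I \subset \per$, we have $\br_\per(\Gamma) \subseteq \br_{\mathcal I}(\Gamma)$. So if $\omega_1,\omega_2$ are separated by an $\per$-elliptic cut, Proposition~\ref{prop:separating-generators} gives some $gc_s$ separating them. The only defect is that $c_s$ need not be $\per$-elliptic, because it may cut some $H \in \mathcal T$ into two infinite pieces.

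Each $H \in \mathcal T$ has unbounded coarse components, with a uniform radius since $\per/G$ is finite (Lemma~\ref{lem:unbounded-coarse-comps}). The argument of Lemma~\ref{lem:unbounded-components-tame} then shows that any such $H$ meets a bounded neighbourhood of $\delta c_s$. By thinness there are only finitely many of them, say $H_1,\dots,H_k$. Because $c_s$ is a cut with $|\Lambda(H_j)|=2$, the two ends $\lambda_{j,1},\lambda_{j,2}$ of $H_j$ lie in $W_1=\Lambda(c_s)$ and $W_2=\Lambda(c_s^\ast)$ respectively.

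\textbf{Step 2: modify each $c_s$ near the bad peripherals.} Apply Lemma~\ref{claim:technical-boundary-claim} to $H_1,\dots,H_k$ and $W_1\sqcup W_2$ to obtain neighbourhoods $U_{j,i}$. For each $j$ apply Lemma~\ref{claim:finding-the-special-cut} with $U = U_{j,1}\cup U_{j,2}$ to get $b_j \in \br_\per(\Gamma)$ separating $\Lambda(H_j)$ from every end outside $U$; orient $b_j$ so that $\Lambda(H_j) \subset \Lambda(b_j)$. Define
$$
c_s' := c_s \cap \bigcap_j b_j^\ast \;\cup\; \bigcup_j \big(b_j \cap B_j\big),
$$
where $B_j$ is chosen so that $H_j$ falls almost entirely on one side, for instance $B_j = \emptyset$. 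The verification as in Proposition~\ref{prop:end-separating-dichotomy} should show that $c_s'$ is $\per$-elliptic. By construction, $c_s'$ agrees with $c_s$ on all ends outside $\bigcup_{j,i} U_{j,i}$. Put into $S$ the cuts $c_s'$, the $b_j$, and, for each of the finitely many orbit representatives $H \in \mathcal T$ and each end $\lambda$ of $H$, an elliptic cut from Proposition~\ref{prop:end-separating-dichotomy}. All of this is $G$-finite.

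\textbf{Step 3: check separation, using the dynamics.} Let $\omega_1,\omega_2$ be elliptically separable and separated by $gc_s$; translate so that $g=1$.
\begin{itemize}
\item If neither end lies in $\bigcup U_{j,i}$, then $c_s'$ separates them.
\item If one end lies in $\Lambda(H_j)$, the partner end is not in $\Lambda(H_j)$, by the dichotomy of Proposition~\ref{prop:end-separating-dichotomy}. A translate of $b_j$, or a translate of the auxiliary cuts adjoined to $S$ in Step~2, separates them. (This case still needs to be pinned down for a partner end inside $U_{j,1}\cup U_{j,2}$.)
\item Otherwise, say $\omega_1 \in U_{j,1}\setminus\Lambda(H_j)$. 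Take $h \in G_{H_j}$ from Lemma~\ref{claim:technical-boundary-claim}. Then $h\omega_1 \in W_1\setminus\bigcup_\ell U_{\ell,1}$. If $\omega_2 \in U_{j,2}$, property (\ref{itm:dynamics-property-2}) puts $h\omega_2$ in $W_2$, and we reduce to the first case (or iterate).
\end{itemize}

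The main obstacle is this last case analysis. It is unclear what happens when $\omega_2$ lies in $W_2$ but outside $U_{j,2}$: we must ensure that $h\omega_2$ does not land in $W_1$ or in another $U_{\ell,i}$. The first thing I would try is to shrink the $U_{j,i}$ further, so that the relevant powers of the loxodromic $t_j$ keep the complement of $U_{j,1}$ inside $W_2 \cup (\text{complement of } U)$. Failing that, I would add finitely many further translates to $S$ to cover the remaining configurations.
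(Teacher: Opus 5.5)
Your construction is correct, and it is in fact the paper's. With $B_j=\emptyset$, your $c_s'=c_s\cap\bigcap_j b_j^\ast$ is exactly the paper's $b\cap\bigcap_j c_j^b$, with your $b_j$ being the paper's $(c_j^b)^\ast$, and it is built from the same two lemmas. Your closing appeal to Proposition~\ref{prop:g-finite-criterion} and Corollary~\ref{cor:gen-set-upgrade-rel-cuts} also matches the paper's. The gap is Step~3, which is where the lemma is actually proved, and you leave it unfinished:
\begin{itemize}
\item The second bullet is explicitly left open.
\item The configuration $\omega_1\in U_{j,1}$, $\omega_2\in W_2\setminus U_{j,2}$ is unresolved.
\item In the third bullet, the reduction ``to the first case'' does not follow. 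The end $h\omega_2\in W_2$ may lie in some $U_{\ell,2}$, and nothing guarantees that iterating terminates.
\end{itemize}
The auxiliary cuts ``for each end $\lambda$ of $H$'' are also not well defined, since no elliptic cut separates the two ends of $H\in\mathcal T$; they are not needed anyway. Shrinking the $U_{j,i}$ or adding further translates is not the fix.

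The missing idea is to sort the cases by which cut fails to separate, not by which $U_{j,i}$ contains the ends. Let $\omega_i\in W_i$. Since $c_s'\subseteq c_s$, all of $W_2$ lies in $\Lambda((c_s')^\ast)$. An end of $W_1$ outside $\bigcup_\ell U_{\ell,1}$ lies in $\Lambda(b_\ell^\ast)$ for every $\ell$, and hence in $\Lambda(c_s')$. Consequently:
\begin{itemize}
\item if $\omega_1\notin\Lambda(b_\ell)$ for every $\ell$, then $c_s'$ separates the two ends;
\item if $\omega_1\in\Lambda(b_\ell)$ but $\omega_2\notin\Lambda(b_\ell)$, then $b_\ell$ separates them.
\end{itemize}
The only configuration left is $\omega_1,\omega_2\in\Lambda(b_\ell)\subseteq U_{\ell,1}\cup U_{\ell,2}$ for a single $\ell$, which forces $\omega_1\in U_{\ell,1}$ and $\omega_2\in U_{\ell,2}$. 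So your problematic case $\omega_2\in W_2\setminus U_{j,2}$ never needs the dynamics. Neither does the case of an end in $\Lambda(H_j)$ whose partner lies outside $U_{j,1}\cup U_{j,2}$.

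In the remaining configuration, the two ends are not both in $\Lambda(H_\ell)$, since they are elliptically separable.
\begin{itemize}
\item If $\omega_1\notin\Lambda(H_\ell)$, Lemma~\ref{claim:technical-boundary-claim} (case $i=1$) gives $h\in G_{H_\ell}$ with $h\omega_1\in W_1\setminus\bigcup_{\ell'}U_{\ell',1}$ and $h\omega_2\in W_2$. By the observation above, $c_s'$ separates $h\omega_1$ from $h\omega_2$, wherever $h\omega_2$ lies in $W_2$.
\item If $\omega_2\notin\Lambda(H_\ell)$, the case $i=2$ puts $h\omega_2$ outside every $U_{\ell',1}\cup U_{\ell',2}$, hence in every $\Lambda(b_{\ell'}^\ast)$, while keeping $h\omega_1\in W_1$. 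The same two-step test then shows that $c_s'$ or some $b_{\ell'}$ separates them.
\end{itemize}
One application of the dynamics therefore suffices, with no iteration. This is, in substance, the paper's three-case argument.
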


    \begin{proof}
        Let $S \subset \br_{\mathcal I}(\Gamma)$ be a finite subset so that $G \cdot S$ additively generates $\br_{\mathcal I}(\Gamma)$.  Bipartition $S$ into two sets $S = S_{\mathrm{ell}} \sqcup S _{\mathrm{hyp}}$, where
        $$
        S_{\mathrm{ell}} := \{b \in S : \text{$b$ is $\per$-elliptic}\}, \ \ \ S_{\mathrm{hyp}} := \{b \in S : \text{$b$ is not $\per$-elliptic}\}.
        $$
        Note that if $S_{\mathrm{hyp}} = \emptyset$, then there is nothing to prove, so let's assume this set is non-empty. For each $b \in S_{\mathrm{hyp}}$, let $H_1^{b} \ldots, H_{k_b}^b \in \mathcal T$ denote the (necessarily finite and non-empty) collection of $\mathcal T$-peripherals which have infinite intersection with both $b$ and $b^\ast$. 

        Every $b \in S_{\mathrm{hyp}}$ induces a non-trivial bipartition of $\Omega(\Gamma)$ into clopen sets $W_1^b \sqcup W_2^b$. Let $\Lambda(H_j^b) = \{\lambda_{j,1}^b, \lambda_{j,2}^b\}$, indexed so that $\lambda^b_{j,i} \in W_i^b$. 
        We now apply Claim~\ref{claim:technical-boundary-claim}, and for each $b \in S_{\mathrm{hyp}}$ we obtain pairwise disjoint neighbourhoods $U_{j,i}^b$ of $\lambda_{j,i}^b$, such that $U_{j,i}^b \subset W_i^b$, and such that for all $j \in \{1,\ldots, k_b\}$, and all $p \in U_{i,j}^b$, there exists $g \in G_{H_j^b}$ such that:
            \begin{enumerate}
                \item $gp \in W_1^b \setminus \bigcup _\ell U_{\ell,1}^b$, and

                \item $gU_{j,2}^b \cap W_1^b = \emptyset$.
            \end{enumerate}
        Now, apply Claim~\ref{claim:finding-the-special-cut}, and for each $j \in \{1, \ldots, k_b\}$ find $c^b_j \in \br_\per(\Gamma)$ such that for all $\omega \in \Omega(\Gamma) \setminus (U_{j,1}^b \cup U_{j,2}^b)$, we have that $c_j^b$ separates $\omega$ from $\Lambda(H_j^b)$. These are oriented in such a way that $H_j^b \cap c_j^b$ is finite. 

        Now, consider the set 
        $$
        R = S_{\mathrm{ell}} \cup \Big\{c_j^b : b \in S_{\mathrm{hyp}}, \ j = 1, \ldots, k_b  \Big\} \cup \Bigg\{ b \cap \bigcap_{j=1}^{k_b} c_j^b : b \in S_{\mathrm{hyp}}  \Bigg\}. 
        $$
        Note that $R$ is finite. We have the following two claims.

        \begin{claim}
            We have that $R \subset \br_\per(\Gamma)$.
        \end{claim}

        \begin{proof}
            Indeed, it is sufficient to show that $a = b \cap \bigcap_{j=1}^{k_b} c_j^b $ is $\mathcal T$-elliptic for every $b \in S_{\mathrm{hyp}}$. Suppose some $H \in \mathcal T$ has infinite intersection with both $a$ and $a^\ast$. Then $H$ has infinite intersection with $b$ and with every $c_j^b$. Each $c_j^b$ is $\mathcal H$-elliptic, so since $H \cap a^\ast$ is infinite, we must have that $H \cap b^\ast$ is infinite. But then $H = H_j^b$ for some fixed $j$. But this contradicts the fact that $H \cap c_j^b$ is infinite, which is false by construction. It follows that $R \subset \br_\per(\Gamma)$. 
        \end{proof}

        \begin{claim}
             For all distinct pairs  $\omega_1, \omega_2 \in \Omega(\Gamma)$, either $\omega_1, \omega_2 \in \Lambda(H)$ for some $H \in \per$, or there exists $a \in G \cdot R$, such that $a$ separates $\omega_1$ and $\omega_2$.
        \end{claim}

        \begin{proof}

        Fix $\omega_1$ and $\omega_2$, and assume without loss of generality that they do not lie in the limit set of some common peripheral. After translating, we may assume that $\omega_1$ and $\omega_2$ are separated by some fixed $b \in S$. Assume without loss of generality that $b \in S_{\mathrm{hyp}}$, and that $\omega_i \in W_i^b$. We have a few cases to consider. 

        \textbf{Case 1.} It could be the case that some $c_j^b$ separates $\omega_1$ and $\omega_2$, in which case we are done. 

        \textbf{Case 2.} Suppose that no $c_j^b$ separates $\omega_1$ and $\omega_2$, and that both $\omega_1$ and $\omega_2$ are contained in (the limit set of) the intersection
        $
        \bigcap_{j=1}^{k_b} c_j^b
        $.
        Then it is easy to see that 
        $
        b \cap \bigcap_{j=1}^{k_b} c_j^b 
        $
        separates $\omega_1$ from $\omega_2$. In this case, we are done. 

        \textbf{Case 3.} Suppose that no $c_j^b$ separates $\omega_1$ and $\omega_2$, but the hypotheses of Case 2 do not hold. Note that, by construction, the limit sets of the $(c_{j}^b)^\ast$ are pairwise disjoint, since
        $$
        \Lambda(c^b_j) \cap (U_{j,1}^b \cup U_{j,2}^b) = \emptyset.
        $$
        In particular, in this case we must have that $\omega_i \in U_{j,i}^b$ for some fixed $j \in \{1, \ldots, k_b\}$. By assumption, we do not have that both $\omega_i$ lie in $\Lambda(H_j^b)$, so let's say without loss of generality that $\omega_1$ does not lie in $\Lambda(H_j^b)$. Now, applying Claim~\ref{claim:technical-boundary-claim}, we have by construction that there exists $h \in G_{H_j^b}$ such that $h \omega_1 \in W_1^b \setminus \bigcup _\ell U_{\ell,1}^b$, and $h \omega_2 \in W_2$. Note that $h\omega_1$, $h\omega_2$ are still separated by $b$. By replacing $\omega_1$ and $\omega_2$ with their $h$-translates, we now have that we must be in either Case 1 or Case 2. In particular, $\omega_1$ and $\omega_2$ are separated by some $G$-translate of some element in $R$. 

        These three cases are exhaustive, and so the claim follows.
        \end{proof}
        
        It now follows from Proposition~\ref{prop:g-finite-criterion} and Corollary~\ref{cor:gen-set-upgrade-rel-cuts} that $\br_\per(\Gamma)$ is a finitely generated $G$-module. This concludes the proof of the lemma. 
    \end{proof}

    Theorem~\ref{thm:fg-char} now follows by chasing the following cycle of implications:
\[\begin{tikzcd}
	{\text{acc. rel. }\mathcal I} && {\mathscr B_{\mathcal I} \text{ f.g.}} \\
	{\text{acc. rel. }\mathcal H} && {\mathscr B_{\mathcal H} \text{ f.g.}}
	\arrow["{(\ref{claim:inf-ended-case})}",  Rightarrow, from=1-1, to=1-3]
	\arrow["{(\ref{claim:I-fg-implies-H-fg})}", Rightarrow, from=1-3, to=2-3]
	\arrow["{(\ref{claim:acc-H-implies-acc-I})}", Rightarrow, from=2-1, to=1-1]
	\arrow["{(\ref{claim:easy-case})}", Rightarrow, from=2-3, to=2-1]
\end{tikzcd}\]


\section{Finitely generated groups}\label{sec:groups}

We now discuss how our definition of relative accessibility relates to the usual group-theoretic notion. Throughout this section, we let $G$ be a finitely generated group, and fix a choice of locally finite Cayley graph $\Gamma$. Let $\mathcal P$ be a finite collection of subgroups of $G$, and let
$$
\per = \{gH : g \in G, \ H \in \mathcal P\}
$$
denote the set of left cosets of elements of $\mathcal P$. Note that since $\mathcal P$ is finite, this is a thin, $G$-invariant peripheral system of $\Gamma$. 

\subsection{Algebraic relative accessibility}

We will need to briefly establish some terminology relating to actions of $G$ upon trees. Our conventions will mostly follow those set out in \cite{jsj}. 

Let $T$ be a simplicial tree. We say that $T$ is a \emph{$G$-tree} if $G$ acts on $T$ by automorphisms. We say that this action is 
\emph{without inversions} if, given any $g \in G$, if $g$ fixes an edge then it fixes the two endpoints of said edge. Note that this condition is fairly inconsequential, as it can always be assured by simply passing to the barycentric subdivision of $T$.
Call $T$ \emph{non-trivial} if there the action has no global fixed point, and \emph{cocompact} if $T/G$ is compact. 
A $G$-tree is \emph{minimal} if there does not exist a proper $G$-invariant subtree. It is a standard fact that, since $G$ is finitely generated, every non-trivial $G$-tree contains a unique minimal $G$-invariant subtree, and the action on this minimal subtree is cocompact. 
If $T$ is a minimal $G$-tree without inversions, then we call $T$ a \emph{finite splitting} of $G$. 
Given two $G$-trees $T_1$, $T_2$, we say that $T_1$ \emph{dominates} $T_2$ if there exists a $G$-equivariant map $f : V(T_1) \to V(T_2)$. We will call such an $f$ a \emph{$G$-map}, and often abuse notation by writing $f : T_1 \to T_2$. 
We say that a subgroup $H \leq G$ acts \emph{elliptically} on $T$ if it fixes a vertex.
It is easy to see that $T_1$ dominates $T_2$ if and only if every vertex stabiliser of $T_1$ acts elliptically on $T_2$. Also, domination is not affected by passing to subdivisions, or replacing the $T_i$ with $G$-invariant subtrees.

A $G$-tree $T$ is said to be \emph{$\mathcal P$-elliptic} if for all $H \in \mathcal P$, we have that $H$ acts elliptically on $T$. 
The group is said to be \emph{one-ended relative to $\mathcal P$} if every finite splitting of $G$ is not $\mathcal P$-elliptic. 
The splitting $T$ is said to be \emph{terminal} if $T$ dominates any other $\mathcal P$-elliptic finite splitting $T'$. We now have the following definition.

\begin{definition}[Algebraic relative accessibility]
    If there exists a terminal $\mathcal P$-elliptic finite splitting of $G$, then we say that $G$ is \emph{accessible relative to $\mathcal P$}. 
\end{definition}

\subsection{Structure trees of nested sets}

We now recount the construction of the \emph{structure tree} of a nested subset of $\br(\Gamma)$. This construction is due to Dunwoody \cite{dunwoody1979accessibility}, though we will follow the more modern conventions set out by Roller \cite{roller-thesis}.

The following notation will be convenient. Given a tree $T$, write 
$$
\mathscr E(T) = \{b \in \br(T) : |\delta b| = 1\}.
$$
That is, $\mathscr E(T)$ consists of those single-edge cuts. Note that $\mathscr E(T)$ additively generates $\br(T)$.

Let $\Gamma$ be a connected $G$-graph. 
Let $E \subset \br(\Gamma)$ be a symmetric, $G$-invariant, $G$-finite subset. An \emph{ultrafilter} on $E$ is a subset $U \subset E$ such that:
\begin{enumerate}
    \item For all $b \in E$, exactly one of $b$ or $b^\ast$ lies in $U$.

    \item If $b \subset b'$ and $b \in U$, then $b' \in U$. 
\end{enumerate}
We say that $U$ is \emph{well-founded} if for all $b \in U$, the set $\{a \subset b : a \in U\}$ is finite.
Form a tree $T = T(E)$, called the \emph{structure tree} of $E$, as follows:
\begin{enumerate}
    \item $V(T) = \{\text{well-founded ultrafilters on $E$}\}$,

    \item Connect $U_1$, $U_2$ by an edge if and only if $|U_1 + U_2| = 2$. 
\end{enumerate}
This is a indeed a simplicial tree, upon which $G$ acts with finitely many orbits. There is a  canonical equivariant bijection between $E$ and $\mathscr E(T)$, which extends to an isomorphism of $G$-modules between $\br(T)$ and the submodule of $\br(\Gamma)$ generated by $E$, and in particular a canonical embedding $\br(T) \into \br(\Gamma)$.

The following observation is straightforward.

\begin{proposition}\label{prop:structure-tree-elliptic}
    If $E \subset \br_\per(\Gamma)$ is nested, $G$-invariant, and $G$-finite, then the structure tree $T = T(E)$ is $\mathcal P$-elliptic. 
\end{proposition}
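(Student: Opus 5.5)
We need to show that each $H \in \mathcal P$ fixes a vertex of $T(E)$, i.e.\ that there is a well-founded ultrafilter on $E$ fixed by $H$. The plan is to write one down explicitly. Viewing $H$ as the peripheral $1\cdot H \in \per$, define
$$
U_H := \{ b \in E : b^\ast \cap H \text{ is finite}\}.
$$
Each $b \in E$ is $\per$-elliptic, so at least one of $b \cap H$, $b^\ast\cap H$ is finite. If $H$ is infinite, exactly one is finite, and hence exactly one of $b, b^\ast$ lies in $U_H$. Upward closure is immediate: if $b\subset b'$ then $b'^\ast \subset b^\ast$. Since $hH = H$ for $h \in H$, and $(hb)^\ast \cap H = h(b^\ast \cap H)$, the set $U_H$ is $H$-invariant. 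So $U_H$ is an $H$-invariant ultrafilter, and it remains to check well-foundedness. If $H$ is finite, it acts on the tree with bounded orbits and so fixes a vertex (or the midpoint of an edge, which becomes a vertex after subdivision, and subdivision does not matter for ellipticity). So assume $H$ is infinite.

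\textbf{Well-foundedness} is the main step. Fix $b \in U_H$ and suppose there are infinitely many $a \in U_H$ with $a \subset b$. Because $E$ is nested and each such $a$ contains a cofinite part of $H$, these $a$ form a family closed under nothing in particular. However, each $\delta a$ must meet a bounded region. The idea is the following. Pick a vertex $x \in H \cap a$ (there are infinitely many such vertices, since $H\cap a$ is cofinite in $H$). Each $a$ lies in $E$, which is $G$-finite, so $|\delta a|$ is uniformly bounded by some $k$. Moreover $a \subset b$, so $a$ omits $b^\ast$, which is nonempty unless $b=V(\Gamma)$, and that degenerate case is handled separately. Take a fixed $y \in b^\ast$ and a fixed $x_0 \in H\cap b$. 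Infinitely many of the $a$ contain $x_0$ (after discarding those $a$ whose finite complement in $H$ contains $x_0$), so $\delta a$ meets a fixed geodesic from $x_0$ to $y$.

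It remains to show there are only finitely many elements of $E$ whose coboundary meets a given finite edge set. This follows from $G$-finiteness together with the fact that the stabiliser in $G$ of an edge of the Cayley graph is trivial (more generally, finite under a proper action): only finitely many translates $g s$ of finitely many representatives $s$ can have $\delta(gs)$ containing a given edge $e$, since this requires $g^{-1}e \in \delta s$, a finite set. Hence the chain is finite, and we get a contradiction.

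The main obstacle I expect is to make this bounded-region argument clean, in particular ensuring that the chosen $x_0$ lies in infinitely many of the $a$. Using cofiniteness, each $a$ misses only finitely many points of $H$, but a priori different $a$ miss different points. The resolution I would try is to use the nested structure: since the $a$ are nested and all contain cofinite parts of $H$, no two are disjoint. So the $a \subset b$ are totally ordered, or else have cross-relations of the forms $a\subset a'$, $a' \subset a$, or $a^\ast \subset a'$. If $a^\ast \subset a' \subset b$ then $b^\ast \subset a$, which contradicts $a \subset b$. Hence the family is a chain, and all its members contain the smallest one's points near $H$; alternatively, an infinite descending chain would still force the coboundaries to cross a fixed path. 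This gives the vertex $U_H$ fixed by $H$, so $T$ is $\mathcal P$-elliptic.
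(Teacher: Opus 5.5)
\paragraph{Verdict: there is a genuine gap in the well-foundedness step.}

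Your setup is sound. For infinite $H$, $U_H$ is an $H$-invariant ultrafilter. For $b\neq V(\Gamma)$, the family $\{a\in U_H : a\subset b\}$ is indeed a chain. Whenever the members of an infinite subchain share a point $x_0\in H$, your count of elements of $E$ whose coboundary meets the path from $x_0$ to $y$ gives the contradiction. This covers the ascending case, and any case with $\bigcap_a (a\cap H)\neq\emptyset$. The approach is also the right one in principle: if an infinite $H$ fixes a vertex $v$ of $T$, one checks that necessarily $U_v=U_H$.

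\paragraph{The uncovered case.} What remains is an infinite strictly descending chain $a_1\supsetneq a_2\supsetneq\cdots$ in $U_H$ with $\bigcap_i a_i\cap H=\emptyset$. An infinite chain bounded above by $b$ need not have a smallest element. In this case every $x_0\in H$ lies in only finitely many $a_i$, so your ``discarding'' step leaves nothing. Nor need the $\delta a_i$ meet any fixed finite edge set: think of $a_i=[i,\infty)$ in $\mathbf{Z}$. So the sentence ``an infinite descending chain would still force the coboundaries to cross a fixed path'' is precisely the unproved heart of the matter.

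\paragraph{Why this case needs a group-theoretic input.} Here $U_H$ is a non-well-founded ultrafilter, i.e.\ an end $\omega$ of $T$ fixed by $H$ (a parabolic action). Your local-finiteness count cannot exclude this. The missing idea is essentially the paper's loxodromic/parabolic argument, which runs in three steps.
\begin{enumerate}
\item No $h\in H$ acts loxodromically on $T$. An edge on its axis gives $a\in E$ with $h^n\in a$ for $n\gg0$ and $h^n\in a^\ast$ for $n\ll0$, contradicting $\per$-ellipticity. Hence every $h\in H$ fixes a tail of a ray to $\omega$.
\item The stabilisers $G_a$, $a\in E$, have uniformly bounded order $M$. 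Each $G_a$ permutes the finite nonempty set $\delta a$, $G$ acts freely on $V(\Gamma)$, and $E$ is $G$-finite.
\item Any $M+1$ distinct elements of $H$ fix a common edge of that ray, so $H$ would be finite.
\end{enumerate}
The uniform bound is essential and is not supplied by your observation that only finitely many translates meet a given edge. Countable locally finite groups act parabolically on trees whose edge stabilisers are all finite.
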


\begin{proof}
    Suppose $T$ is not $\mathcal P$-elliptic, so there exists some $H \in \mathcal P$ which acts on $T$ with unbounded orbits.
    If $H$ contains an element acting loxodromically on $T$, then it is easy to see that some $b \in E$ must satisfy that both $b \cap H$ and $b^\ast \cap H$ are infinite. Thus, $H$ acts parabolically on $T$, fixing a unique end $\omega \in \Omega(T)$. However, this will contradict the fact that $T$ has finite edge stabilisers. Indeed, let $P$ be a geodesic in $T$, then for all $g \in H$, there must exist a tail $P' \subset P$ such that $gP' = P'$, lest $g$ be acting loxodromically. From here, it follows quickly that $T$ contains infinite edge stabilisers. 
\end{proof}

\subsection{Finite splittings and cuts}

We now discuss how finite splittings of $G$ relate to $\br(\Gamma)$. 
First, we have the following lemma. 

\begin{lemma}\label{lem:br-hom-well-defined}
    Let $\Delta$ be some connected, cocompact $G$-graph with finite edge stabilisers, and $T$ a $G$-tree with finite edge stabilisers. Let $f : V(\Delta) \to V(T)$ be a $G$-equivariant map. Then the map
    $$
    F: \br(T) \to \br(\Delta), \ \ F(b) = f^{-1}(b)
    $$
    is a well-defined, monotone, complement-respecting, $G$-equivariant ring homomorphism.
    Moreover, if $T$ is minimal then we have that 
    $$
    F^{-1}(\mathscr F(\Delta)) \subset \mathscr F(T).
    $$
\end{lemma}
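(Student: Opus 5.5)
The plan is to verify each property directly from the definition $F(b) = f^{-1}(b)$, with finiteness of $\delta F(b)$ as the only substantive point; the set-theoretic properties are immediate, and the "moreover" clause needs a separate argument using minimality.

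Preimages commute with symmetric difference, intersection and complement. So $F$ is additive and multiplicative, preserves inclusion, and satisfies $F(b^\ast) = F(b)^\ast$. Equivariance follows from equivariance of $f$: $f^{-1}(gb) = g f^{-1}(b)$.

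Well-definedness means showing $|\delta F(b)| < \infty$ for every $b \in \br(T)$. Since $\mathscr E(T)$ additively generates $\br(T)$ and $\delta$ of a sum is contained in the union of the coboundaries, it suffices to treat $b$ with $\delta b = \{e\}$ a single edge.

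1. An edge $xy$ of $\Delta$ lies in $\delta F(b)$ exactly when $f(x)$ and $f(y)$ lie on opposite sides of $e$. Equivalently, $e$ lies on the geodesic $[f(x), f(y)]$ in $T$.
2. There are finitely many $G$-orbits of edges in $\Delta$. For each orbit representative $xy$ the geodesic $[f(x), f(y)]$ is a finite set of edges. Hence $\delta F(b)$ is contained in $\{ \epsilon \in E(\Delta) : e \in g\sigma \}$, where $\sigma$ ranges over finitely many finite geodesics.
3. Fix one representative $xy$ and one edge $e'$ of $[f(x),f(y)]$. The set of $g \in G$ with $g e' = e$ is a coset of the stabiliser $G_{e'}$, which is finite. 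So only finitely many translates $g(xy)$ contribute.

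Summing over the finitely many choices gives finiteness of $\delta F(b)$. Note that only finiteness of the edge stabilisers of $T$ is used here.

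For the "moreover" clause, let $b \in \br(T)$ with $f^{-1}(b)$ finite or cofinite; after replacing $b$ by $b^\ast$ we may assume $f^{-1}(b)$ is finite. We want $b$ finite or cofinite. Suppose instead that both $b$ and $b^\ast$ are infinite. Because $\delta b$ is finite, $T \setminus \delta b$ has finitely many components, so some component $C \subset b$ is infinite, and $C$ is adjacent to an edge $e \in \delta b$.

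1. Let $T_0 \subseteq T$ be the subtree spanned by $f(V(\Delta))$ and the geodesics between these points; it is $G$-invariant.
2. By minimality, $T_0 = T$, so every edge of $T$, and in particular every edge of $C$, lies on some geodesic $[f(u),f(v)]$.
3. The main obstacle is to turn this into a contradiction with finiteness of $f^{-1}(b)$. The key point is that the set $f(V(\Delta)) \cap b$ is finite. Any geodesic $[f(u),f(v)]$ meeting an edge of $C$ deep inside $C$ must have an endpoint inside the half-tree cut off by that edge. That half-tree is contained in $b$ once it lies beyond $\delta b$.
4. Since $C$ is infinite and locally finite pieces may fail, I would argue via half-trees instead. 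Take an edge $e'' \subset C$ whose half-tree $h$ (the side away from $\delta b$) avoids the finite set $f(V(\Delta)) \cap b$. Such an edge exists: there are finitely many bad points, and if every edge of $C$ had a bad point on its far side, $C$ would be covered by finitely many geodesics from $\delta b$, making $C$ finite unless $T$ is not locally finite. In that non-locally-finite case, pick instead an edge of $C$ not on any geodesic from $\delta b$ to a bad point; such an edge exists because those geodesics form a finite subtree.
5. Then $h$ contains no point of $f(V(\Delta))$, so $T \setminus h$ is a proper $G$-invariant subtree? Not quite: $T \setminus h$ need not be $G$-invariant. Instead, every edge of $h \cup \{e''\}$ lies on no geodesic between points of $f(V(\Delta))$, because both endpoints of any such geodesic lie outside $h$ and $h$ is convex with a single boundary edge. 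This contradicts $T_0 = T$.

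Hence $b$ is finite or cofinite.
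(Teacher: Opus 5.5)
Your proposal is correct. The well-definedness part is the paper's argument, written directly rather than by contradiction. The paper also reduces to $b \in \mathscr E(T)$ and takes orbit representatives of edges of $\Delta$. It then observes that the translated geodesics $g^{-1}[f(u),f(v)]$ containing $g^{-1}e$ range over finitely many finite geodesics, and uses finiteness of $G_e$ to bound the number of group elements.

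The ``moreover'' clause is where you genuinely differ. The paper assumes $F(b)$ is finite and uses that $T$ is cocompact to get an $r$ with $B(\operatorname{Im}(f),r)$ covering $T$. Cocompactness is not among the lemma's hypotheses; it comes from minimality together with the standing finite-generation assumption on $G$ in that section. The paper then uses that a minimal tree is leafless: an infinite component of $T[b]$ has vertices arbitrarily far from its finite coboundary, hence contains infinitely many image points.

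You use minimality only through the $G$-invariant convex hull of $f(V(\Delta))$, which must equal $T$. You then exhibit an edge of an infinite component $C$ lying on no geodesic between image points. Your route needs neither cocompactness, nor local finiteness, nor leaflessness, so it works for any minimal $G$-tree. The paper's route gives the more quantitative fact that the image is coarsely dense. Both show that $f^{-1}(b)$ finite forces $b$ finite.

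Two small clean-ups in your step 4. First, the aside about local finiteness is unnecessary: a finite union of finite geodesics is finite in any tree. Second, the set to avoid should be the convex hull $K$ of all endpoints of edges of $\delta b$ together with the finitely many points of $f(V(\Delta)) \cap b$. The union of geodesics from $\delta b$ to the bad points need not be connected, so otherwise ``the side away from $\delta b$'' is not well defined. With $e''$ an edge of $C$ not in $K$, $K$ lies on one side of $e''$; let $h$ be the other half-tree. Then $h$ contains an endpoint of $e''$, which lies in $b$, and $h$ meets no edge of $\delta b$, so $h \subset b$. Since $h$ also misses the bad points, $h \cap f(V(\Delta)) = \emptyset$, and your step 5 goes through as written.
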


\begin{proof}
    Since preimages distribute over symmetric difference and intersection, it is easy to see that $F$ is $G$-equivariant homomorphism of rings $F: \br(T) \to \mathscr P(\Delta)$. It is also clear that $F$ is monotone and complement-respecting. We must verify that $F(b)$ has finite coboundary for every $b \in \br(T)$. 
    It is sufficient to consider the case where $b \in \mathscr E(T)$, say $\delta b = \{e\}$. 
    Write $c = F(b)$, and assume that $\delta c$ is infinite. 
    Let $e_1, e_2, \ldots$ denote the edges in $\delta c$. Let $u_i$, $v_i$ denote the endpoints of $e_i$, say $u_i \in c$ and $v_i \in c^\ast$. 
    This means that $f(u_i) \in b$, and $f(v_i) \in b^\ast$. 
    Pick orbit representatives $e'_1, \ldots, e_k,' \in E(\Delta)$. 
    For each $i > 0$, fix $g_i \in G$ such that $e_i = g_ie_\ell'$ for some (unique) $e_\ell'$. Consider the geodesic $P_i = [f(u_i), f(v_i)]$, which must contain the edge $e$. Note that $\bigcup_i g_i^{-1}P_i$ is finite, since the endpoints of the $g_i^{-1}P_i$ are exactly the images of the endpoints of the $e'_\ell$, of which there are finitely many. 
    This induces a finite partition  on $\{g_i\}$, where $g_i$, $g_j$ lie in the same part if and only if $g_i^{-1} e = g_j^{-1} e$. Let $\pi$ be a part of this partition. Fix $g \in \pi$, and let $h \in \pi$ be arbitrary. Then 
    $$
    g^{-1}e = h^{-1}e \implies hg^{-1} \in G_e \implies h \in G_e g,
    $$ 
    Since $G_e$ is finite, we deduce that $\pi \subset G_eg$ is finite. In particular, this implies that $\{g_i\}$ is finite and so $\delta c$ is finite. This concludes the first part of this lemma.

    Suppose now that $T$ is minimal.
    Let $b \in \br(T)$, and suppose $F(b) \in \mathscr F(\Delta)$. Without loss of generality, let's assume that $F(b)$ is finite (as opposed to cofinite), since $F$ respects complements. 
    This means that $f^{-1}(b)$ is finite, and so $b \cap \operatorname{Im}(f)$ is finite. Since $T$ is cocompact, the image of $f$ coarsely covers $V(T)$; that is, there exists $r > 0$ such that $B(\operatorname{Im}(f),r)$ covers $T$. Since $\delta b$ is finite, $T[b]$ has finitely many connected components, and each of these components has finite coboundary in $T$. If one of these components $U$ were infinite, since $T$ is minimal and thus leafless, we must have that $U$ contains vertices arbitrarily far from $\delta U$. But then $U$ must intersect the image of $f$ infinitely often, which is a contradiction. In particular, we conclude that $F^{-1}(\mathscr F(\Delta)) \subset \mathscr F(T)$. 
\end{proof}

Let $T$ be a cocompact $G$-tree with finite edge stabilisers. By fixing a basepoint $x_0 \in V(T)$, and consider the orbit map 
    $$
    \varphi_{x_0} : V(\Gamma) \mapsto V(T), \ \ \ g  \mapsto g x_0. 
    $$
    This map is $G$-equivariant, and well-defined, since the action of $G$ upon itself is free. Pulling back, this induces a map
    $$
    \Phi_{x_0} : \br(T) \to \br(\Gamma), \ \ \ \Phi_{x_0}(b) = \varphi_{x_0}^{-1}(b).
    $$
    By Lemma~\ref{lem:br-hom-well-defined}, this is a well-defined homomorphism of $G$-modules. 

\begin{proposition}\label{prop:map-from-br-tree-to-br-gamma}
    Let $T$ be a cocompact $G$-tree with finite edge stabilisers. Then the following hold:
    \begin{enumerate}
        \item\label{itm:basic-1} If $T$ is $\mathcal P$-elliptic then $\operatorname{Im}(\Phi) \subset \br_\per(\Gamma)$. 

        \item\label{itm:basic-2} For all $x_0, y_0 \in V(T)$, $b \in \br(T)$, we have $\Phi_{x_0}(b) + \Phi_{y_0}(b)$ is finite.


        \item\label{itm:basic-4} Suppose that $T = T(E)$ is the structure tree of a nested, $G$-invariant, $G$-finite subset $E \subset \br(\Gamma)$, then there exists $x_0 \in V(T)$ such that $\Phi_{x_0}$ is precisely the canonical embedding $\br(T) \into \br(\Gamma)$. 
    \end{enumerate}
    
\end{proposition}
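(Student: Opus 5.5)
We treat the three items separately; each reduces to elementary properties of the orbit map $\varphi_{x_0}$ together with Lemma~\ref{lem:br-hom-well-defined}.

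\textbf{Item (\ref{itm:basic-1}).} Since $\Phi_{x_0}$ is additive, it suffices to show that $c = \Phi_{x_0}(b)$ is $\per$-elliptic when $b \in \mathscr E(T)$, say $\delta b = \{e\}$. Fix a peripheral $gH$ with $H \in \mathcal P$. As $T$ is $\mathcal P$-elliptic, $H$ fixes a vertex $v$ of $T$, so $gH$ fixes $gv$. By equivariance, $\varphi_{x_0}(gH) = gH x_0 = gH g^{-1}\cdot(gx_0)$ is a bounded subset of $T$: it lies in the ball of radius $d(x_0,v)$ about $gv$. A bounded set in a tree meets only finitely many half-trees of $e$ in a nontrivial way, but we need more, namely that only finitely many points of $gH$ map to one side. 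This is where finite edge stabilisers enter. If both $c \cap gH$ and $c^\ast \cap gH$ were infinite, there would be infinitely many $h \in gHg^{-1}$ (the stabiliser of $gv$) with $hgx_0$ on each side of $e$. We may assume $gv \in b$. For $h$ with $hgx_0 \in b^\ast$, the geodesic $[gv, hgx_0] = h[gv, gx_0]$ crosses $e$, so $h^{-1}e$ lies on the fixed finite geodesic $[gv, gx_0]$. Hence $h$ lies in finitely many cosets $G_{e'}k$ with $G_{e'}$ finite, so there are only finitely many such $h$. Since $G$ acts freely on $V(\Gamma)$, this makes $c^\ast \cap gH$ finite. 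The same argument handles the case $gv \in b^\ast$.

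\textbf{Item (\ref{itm:basic-2}).} Again reduce to $b \in \mathscr E(T)$ with $\delta b=\{e\}$. A vertex $g \in \Phi_{x_0}(b) + \Phi_{y_0}(b)$ satisfies that exactly one of $gx_0, gy_0$ lies in $b$. Then $g[x_0,y_0]$ contains $e$, so $g^{-1}e$ lies on the finite geodesic $[x_0,y_0]$. By the finite edge stabiliser argument above, only finitely many $g$ do this.

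\textbf{Item (\ref{itm:basic-4}).} This is the step I expect to need the most care, since it depends on the precise form of the canonical embedding. For $T = T(E)$, the embedding sends the single-edge cut of $T$ corresponding to $a \in E$ to $a$ itself. I would choose $x_0$ to be the well-founded ultrafilter $U_{1} := \{a \in E : 1 \in a\}$, where $1 \in G = V(\Gamma)$ is the identity vertex. This is an ultrafilter by symmetry and monotonicity. It is well-founded because only finitely many $a \in E$ with $1 \in a$ are contained in a given one: these are the nested cuts whose coboundaries separate $1$ from a fixed $\delta a$, and $E$ is $G$-finite with finite coboundaries. Then $gx_0 = U_g = \{a : g \in a\}$. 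The half-tree of $T$ on the side of the edge $\{a,a^\ast\}$ determined by $a$ consists of the ultrafilters containing $a$. Hence $\varphi_{x_0}^{-1}$ of this half-tree is $\{g : a \in U_g\} = \{g : g \in a\} = a$. So $\Phi_{x_0}$ agrees with the canonical embedding on $\mathscr E(T)$, and therefore everywhere by additivity.
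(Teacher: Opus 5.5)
Your proof is correct. Items (2) and (3) follow the paper's argument almost exactly. For (2), the paper also pulls $g^{-1}e$ back onto the finite geodesic $[x_0,y_0]$ and uses finiteness of $G_e$. For (3), it also takes $x_0=\{a\in E : 1\in a\}$, and you supply more of the verification that the paper calls straightforward.

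Item (1) is where you take a genuinely different route.

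\textbf{The paper's route.} It argues by contradiction through the ends of $\Gamma$. If a coset $H$ met both $\Phi(b)$ and $\Phi(b)^\ast$ infinitely, then $\Lambda(H)$ would meet both $\Lambda(\Phi(b))$ and $\Lambda(\Phi(b)^\ast)$. Results of Hamann then produce a loxodromic element of $H$ with one limit point on each side, and the paper asserts this element ``clearly'' cannot fix a vertex of $T$.

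\textbf{Your route.} You stay entirely inside $T$. The conjugate $gHg^{-1}$ fixes $gv$. One of its elements $h$ can move $gx_0$ across $e$ only if $h^{-1}e$ lies on the fixed finite geodesic $[gv,gx_0]$, and finite edge stabilisers allow only finitely many such $h$. The reduction to single-edge cuts is legitimate because $\br_\per(\Gamma)$ is closed under addition (Proposition~\ref{prop:subring}).

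\textbf{Comparison.} Your argument is direct rather than by contradiction, and uses no limit sets, loxodromics or external citations. It also makes explicit what the paper leaves as ``clear'': ruling out a fixed vertex for the loxodromic element comes down to exactly your counting argument, applied to the cyclic group it generates. The paper's version ties the statement to the end-dynamics viewpoint used elsewhere in the paper. For this statement on its own, yours is the cleaner proof.

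One cosmetic correction: the set $\{h : h^{-1}e=e'\}$ is a coset $G_e h_0$ (equivalently $h_0G_{e'}$), not $G_{e'}k$. The finiteness conclusion is unaffected.
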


\begin{proof}
    We first prove (\ref{itm:basic-1}). Fix $x_0 \in V(T)$ and write $\Phi = \Phi_{x_0}$. 
    Suppose now that $T$ is $\mathcal P$-elliptic, and let $b \in \operatorname{Im}(\Phi)$. Suppose some $H \in \per$ has infinite intersection with both $b$ and $b^\ast$. Up to translating, we may assume that $H$ is one of the subgroups in $\mathcal P$. We necessarily have that $|\Lambda(H)| > 1$, as it intersects both $\Lambda(b)$ and $\Lambda(b^\ast)$. Then, $H$ must contain a loxodromic element $g$ with one limit point in $\Lambda(b)$, and the other in $\Lambda(b^\ast)$. Indeed, $\Omega(\Gamma)$ is bipartitioned into clopen sets $\Lambda(b) \sqcup \Lambda(b^\ast)$, with $\Lambda(H)$ intersecting both sides of this bipartition. By \cite[Thm.~2.6(i)]{Hamann}, there exists loxodromics $h_1, h_2 \in H$ such that 
    $$
    \Lambda(h_1) \cap \Lambda(b) \neq \emptyset, \ \ \ \Lambda(h_2) \cap \Lambda(b^\ast) \neq \emptyset. 
    $$
    If neither $h_i$ satisfy our requirements, then $\Lambda(h_1) \cap \Lambda(h_2) = \emptyset$, and so then we may apply \cite[Thm.~2.8]{Hamann} to find our desired loxodromic $g \in H$.
    But then, given this, it is clear that $g$ cannot fix a point on $T$, which contradicts the fact that $T$ is $\mathcal P$-elliptic.

    We now prove (\ref{itm:basic-2}). Let $x_0,y_0 \in V(T)$, and write $\Phi = \Phi_{x_0}$, $\Psi = \Phi_{y_0}$. Let $P$ denote the geodesic between $x_0$ and $y_0$, which contains finitely many edges. Let $b \in \br(T)$. It is sufficient to consider the case where $b \in \mathscr E(T)$, say $\delta b = \{e\}$. Consider 
    $$
    \Phi(b) + \Psi(b) = \varphi^{-1}(b) + \psi^{-1}(b).
    $$
    Suppose this were infinite, and enumerate the elements $g_1, g_2, \ldots$, and so on. Without loss of generality, let's say $g_i \in \varphi^{-1}(b) \setminus \psi^{-1}(b)$ for all $i > 0$, so $g_ix_0 \in b$ and $g_iy_0\in b^\ast$. This means that $g_iP$ contains the single edge $e$ in $\delta b$, and so $g^{-1}_ie$ lies in $P$ for all $i > 0$. Passing to a subsequence, we may assume that $g^{-1}_ie = g^{-1}_je$ for all $i,j > 0$. But then $g_i g^{-1}_je = e$ for all $i, j >0$. Since the $g_i$ are pairwise distinct, this implies the stabiliser of $e$ is infinite, which is a contradiction.


    Finally, we observe that (\ref{itm:basic-4}) holds. Suppose that $T = T(E)$ is a structure tree of a nested, $G$-invariant, $G$-finite, nested subset $E \subset \br(\Gamma)$. We must choose $x_0$ so that $\Phi_{x_0}$ recovers precisely the canonical embedding $\br(T) \into \br(\Gamma)$. There is a natural choice for such an $x_0$. Let
    $$
    U = \{b \in E: 1 \in b\}.
    $$
    It is easy to verify that $U$ is a well-founded ultrafilter on $E$, and in particular is a vertex of $T$. We then set $x_0 = U$. It is straightforward to verify that $\Phi_{x_0}$ recovers the canonical embedding $\br(T) \into \br(\Gamma)$. 
\end{proof}

\begin{lemma}\label{lem:g-map-gives-br-map}
    Let $T_1$, $T_2$ be cocompact $G$-trees with finite edge-stabilisers. Let $f : T_1 \to T_2$ be a $G$-map. Then this  induces a $G$-equivariant ring homomorphism
    $
    F : \br(T_2) \to \br(T_1).
    $
    Moreover, given $x_0 \in V(T_1)$, we have that 
    $$
    \Phi_{f(x_0)} = \Phi_{x_0} \circ F.
    $$
\end{lemma}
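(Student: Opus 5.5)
The plan is to obtain $F$ by applying Lemma~\ref{lem:br-hom-well-defined} with $\Delta = T_1$, $T = T_2$, and the $G$-map $f : V(T_1) \to V(T_2)$, so that
$$
F(b) = f^{-1}(b), \qquad b \in \br(T_2).
$$
The hypotheses of that lemma require $\Delta$ to be a connected, cocompact $G$-graph with finite edge stabilisers. Here $T_1$ is a tree, hence connected, and it is cocompact with finite edge stabilisers by assumption. Likewise $T_2$ is a $G$-tree with finite edge stabilisers. The lemma then says directly that $F$ is a well-defined, monotone, complement-respecting, $G$-equivariant ring homomorphism $\br(T_2) \to \br(T_1)$. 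This gives the first assertion. Minimality of $T_2$ is not needed for this part, since we do not use the second half of Lemma~\ref{lem:br-hom-well-defined}.

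For the identity $\Phi_{f(x_0)} = \Phi_{x_0} \circ F$, I would compare the underlying vertex maps. The orbit maps satisfy
$$
\varphi_{f(x_0)}(g) = g f(x_0) = f(g x_0) = f(\varphi_{x_0}(g)),
$$
using $G$-equivariance of $f$. So $\varphi_{f(x_0)} = f \circ \varphi_{x_0}$ as maps $V(\Gamma) \to V(T_2)$. Taking preimages reverses composition, so for every $b \in \br(T_2)$,
$$
\Phi_{f(x_0)}(b) = (f\circ\varphi_{x_0})^{-1}(b) = \varphi_{x_0}^{-1}(f^{-1}(b)) = \Phi_{x_0}(F(b)).
$$

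The only step that needs any care is confirming that the hypotheses of Lemma~\ref{lem:br-hom-well-defined} really hold for $\Delta = T_1$, that is, that a cocompact tree counts as a connected, cocompact $G$-graph with finite edge stabilisers. Both $\Phi_{x_0}$ and $\Phi_{f(x_0)}$ are already well defined by the same lemma applied to the Cayley graph $\Gamma$, which has trivial edge stabilisers. The rest of the argument is functoriality of preimages.
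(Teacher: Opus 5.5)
Your proposal is correct and is essentially the paper's proof. The paper also obtains $F(b)=f^{-1}(b)$ from Lemma~\ref{lem:br-hom-well-defined}, and it proves the identity by the same equivariance argument, written elementwise as $g \in \Phi_{x_0}(F(b)) \iff f(gx_0)\in b \iff g f(x_0) \in b \iff g \in \Phi_{f(x_0)}(b)$; your version packages this as $\varphi_{f(x_0)} = f\circ\varphi_{x_0}$.
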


\begin{proof}
    The existence of $F$ is automatically ensured by Lemma~\ref{lem:br-hom-well-defined}, given by the formula
    $
    F(b) = f^{-1}(b)
    $. 
    Let $x_0 \in V(T_1)$ and write $y_0 = f(x_0)$. 
    Let $b \in \br(T_2)$. We have that $\Phi_{y_0}(b) = \varphi_{y_0}^{-1}(b) =: A$, where $\varphi_{y_0}(g) = gy_0$.
    We also have that 
    $$
    \Phi_{x_0} \circ F(b) = \varphi^{-1}_{x_0}(F^{-1}(b)) =: B.
    $$
    Given $g \in V(\Gamma)$, we have that
    $$
    g \in B \Longleftrightarrow f \circ \varphi_{x_0}(g) \in b \Longleftrightarrow f(gx_0) \in b \Longleftrightarrow g y_0 \in b \Longleftrightarrow g \in A.
    $$
    It follows that $\Phi_{f(x_0)} = \Phi_{x_0} \circ F$. 
\end{proof}


    





Conversely, we have the following. 

\begin{lemma}\label{lem:br-map-gives-gmap}
    Let $T_1$, $T_2$ be cocompact $G$-trees with finite edge-stabilisers. Suppose there is a monotone, complement-respecting, $G$-equivariant homomorphism of rings 
    $$
    \Phi : \br(T_2) \to \br(T_1)
    $$
    such that
    $
    \ker (\Phi) \cap \mathscr E(T_2) = \emptyset
    $. 
    Then $T_1$ dominates $T_2$.  
\end{lemma}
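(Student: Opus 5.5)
We have to show: given $\Phi:\br(T_2)\to\br(T_1)$ monotone, complement-respecting, $G$-equivariant, killing no single-edge cut, there is a $G$-map $V(T_1)\to V(T_2)$. It suffices to show that every vertex stabiliser $G_x$ of $T_1$ acts elliptically on $T_2$, since domination is equivalent to this. The plan is to use $\Phi$ to attach to each $x\in V(T_1)$ an ultrafilter on $\mathscr E(T_2)$, and to check that it is well-founded, hence a vertex of $T_2$ (the structure tree of $\mathscr E(T_2)$ is $T_2$ itself).

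\textbf{Step 1: the ultrafilter.} Fix $x\in V(T_1)$ and set
$$
U_x=\{b\in\mathscr E(T_2): x\in\Phi(b)\}.
$$
Since $\Phi$ respects complements, $\Phi(b^\ast)=\Phi(b)^\ast$, so exactly one of $b,b^\ast$ lies in $U_x$. Monotonicity gives that $b\subset b'$ and $b\in U_x$ imply $b'\in U_x$. Thus $U_x$ is an ultrafilter on $\mathscr E(T_2)$. Equivariance of $\Phi$ gives $U_{gx}=gU_x$, and in particular $G_x$ fixes $U_x$.

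\textbf{Step 2: well-foundedness.} This is the main obstacle. We need that for each $b\in U_x$ only finitely many $a\in U_x$ satisfy $a\subset b$. Suppose instead there is an infinite descending chain $b=a_0\supsetneq a_1\supsetneq\cdots$ in $U_x$ (or infinitely many elements below $b$, whose structure in a tree forces a chain or infinitely many incomparable elements).

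\begin{itemize}
\item Infinitely many pairwise disjoint-complement elements cannot all contain $x$, since $\Phi(a^\ast)$ for incomparable $a$ are disjoint nonempty sets. Their nonemptiness comes from $\ker\Phi\cap\mathscr E(T_2)=\emptyset$, which forces $\Phi(a^\ast)\neq\emptyset$.
\item For a chain, the differences $a_i\setminus a_{i+1}$ are nonempty elements of $\br(T_2)$ whose images are disjoint subsets of $\Phi(b)$, all avoiding $x$. I will use the finiteness of edge stabilisers and cocompactness to bound this. First, by Lemma~\ref{lem:br-hom-well-defined}-style arguments, each $\Phi(a)$ for $a\in\mathscr E(T_2)$ has coboundary in $T_1$ contained in a finite set depending only on the $G$-orbit of $a$. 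Up to passing to a subsequence, all $a_i$ lie in one $G$-orbit, so $a_i=g_ia_0$ and $|\delta\Phi(a_i)|$ is bounded. The sets $\Phi(a_i)$ are then nested, all contain $x$, and all lie in one orbit, with coboundaries of bounded size in $T_1$. In a tree, a strictly decreasing nested sequence of such translates all containing the fixed vertex $x$ must have their coboundaries at bounded distance from $x$. There are then only finitely many candidate coboundaries, so $\Phi(a_i)=\Phi(a_j)$ for some $i\neq j$. That gives $\Phi(a_i+a_j)=\emptyset$ with $a_i+a_j\neq 0$. I still need to upgrade this to a contradiction with the kernel hypothesis. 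Here I would argue that $a_i+a_j$ is a finite sum of single-edge cuts nested along the chain, and that some single edge cut $e$ between them has $\Phi(e)$ sandwiched between $\Phi(a_i)$ and $\Phi(a_j)$ by monotonicity, so $\Phi(e)=\Phi(a_i)$. Iterating or choosing $j=i+1$ with adjacent edges yields an element of $\mathscr E(T_2)$ mapping to $\emptyset$ or to the whole set, contradicting the hypothesis.
\end{itemize}

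\textbf{Step 3: conclusion.} With well-foundedness established, $U_x\in V(T_2)$ and it is fixed by $G_x$. Every vertex stabiliser of $T_1$ therefore fixes a vertex of $T_2$. Equivalently, $x\mapsto U_x$ is directly a $G$-map $V(T_1)\to V(T_2)$, so $T_1$ dominates $T_2$.
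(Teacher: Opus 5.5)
Your Step~1 and Step~3 are fine and match the paper: your $U_x$ is exactly the ultrafilter $U_2=\Psi^{-1}(U_1)$ used there. All of the content of the lemma lies in the well-foundedness of Step~2, and there is a genuine gap in the chain case. Two steps fail.

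\emph{Finitely many candidates.} The assertion that there are ``only finitely many candidate coboundaries'' needs $T_1$ to be locally finite, and this is not assumed. $T_1$ has finite edge stabilisers but possibly infinite vertex stabilisers, so vertices may have infinite valence. There are then infinitely many edge sets of bounded size at bounded distance from $x$.

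\emph{The upgrade.} Even granting $\Phi(a_i)=\Phi(a_j)$ with $i\neq j$, your route to a contradiction does not work. Sandwiching only gives $\Phi(c)=\Phi(a_i)$ for every half-tree $c$ with $a_j\subseteq c\subseteq a_i$. For adjacent such $c\supsetneq c'$ sharing a vertex $w$ you get $\Phi(c\cap c'^\ast)=\emptyset$. This yields an element of $\mathscr E(T_2)$ in the kernel only if $w$ has a third branch. If the intermediate vertices have degree $2$, then $c\cap c'^\ast=\{w\}\notin\mathscr E(T_2)$ and nothing is contradicted.

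The underlying problem is that you never use the finiteness of edge stabilisers of $T_1$. You announce that you will, but only cocompactness of $T_2$ actually appears. The lemma is false without this hypothesis. Take $G=\Z$, let $T_2$ be the line with vertex set $\Z$ and $G$ acting by translation, and let $T_1$ be a single edge $\{p,q\}$ with trivial action. Put $p\in\Phi(b)$ iff $b$ contains all sufficiently large integers, and $q\in\Phi(b)$ iff $b$ contains all sufficiently negative ones. This $\Phi$ is a monotone, complement-respecting, equivariant ring homomorphism sending every half-line to $\{p\}$ or $\{q\}$, so no single-edge cut is killed. Along $a_i=\{m\geq i\}$ everything in your argument goes through, including $\Phi(a_i)=\Phi(a_j)$. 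Yet $U_p$ is not well-founded, and $G_p=G$ fixes no vertex of $T_2$.

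The missing step is the paper's Claim, which runs as follows.
\begin{enumerate}
\item Pick $u\in\Phi(a_0)^\ast$; this set is nonempty because $\Phi(a_0^\ast)\neq\emptyset$ by the kernel hypothesis.
\item Since $x\in\Phi(a_i)\subseteq\Phi(a_0)$, each $\delta\Phi(a_i)$ contains an edge of the \emph{finite} geodesic $[u,x]$. After passing to a subsequence, all of them contain a fixed edge $e$.
\item With $a_i=g_ia_0$, the edge $g_i^{-1}e$ lies in the finite set $\delta\Phi(a_0)$. After passing to a further subsequence, $g_i^{-1}e$ is constant.
\item Then the infinitely many distinct elements $g_ig_1^{-1}$ all lie in the finite group $G_e$, a contradiction.
\end{enumerate}

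Separately, your first bullet is not a valid argument: infinitely many disjoint nonempty sets avoiding $x$ is no contradiction. That case is vacuous, however. Two elements of an ultrafilter that both lie in $b$ are comparable, and an infinite chain of half-trees inside $b$ contains an infinite strictly descending sequence.
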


\begin{proof}
    Fix $v \in V(T_1)$, and let $G_v \leq G$ denote its stabiliser. It is sufficient to show that $G_v$ acts elliptically on $T_2$. Consider the inclusion $\iota : \mathscr E(T_2) \into \br(T_2)$, and let $\Psi = \Phi \circ \iota$. Consider 
    $
    U_1 = \{b \in \operatorname{Im}(\Psi) : v \in b\}
    $. 
    This is an ultrafilter on $\operatorname{Im}(\Psi)$. Let 
    $$
    U_2 = \Psi^{-1}(U_1).
    $$
    It is easy to verify that $U_2$ is an ultrafilter on $\mathscr E(T_2)$, since $\Phi$ is monotone and complement preserving. We claim that $U_2$ is well-founded. To see this, we will need the following. 

    \begin{claim}\label{claim:dcc}
        Suppose $b_1 \supset b_2 \supset \ldots$ is a strictly descending chain in $\mathscr E(T_2)$. Then
        $$
        \bigcap_i \Psi(b_i) = \emptyset.
        $$
    \end{claim}

    \begin{proof}
        Suppose $v \in \bigcap_i \Psi(b_i)$. Note that $(\Psi(b_i))_i$ is also a descending chain in $\br(T_1)$, since $\Phi$ is monotone. Passing to a subsequence we may assume the $b_i$ are orbit-equivalent, since $T_2$ is  cocompact. 
        For all $i > 0$, fix $g_i \in G$ such that $g_i b_i = b_1$. Note that all the $g_i$ are pairwise distinct. 
        Since $\ker (\Phi) \cap \mathscr E(T_2) = \emptyset$, we have that $\delta \Psi(b_i)$ is non-empty for all $i > 0$. Let $u \in  \Psi(b_1)^\ast$, and consider the geodesic $P$ connecting $u$ to $v$, which contains finitely many edges. Passing to another subsequence, we may assume without loss of generality that every $\delta \Psi(b_i)$ contains some fixed edge of $P$, say $e$.  
        Consider $g_i e$, which lies in $\delta \Psi(b_1)$. This is a finite collection of edges, so passing to yet another subsequence we may assume that $g_ie = g_je$ for all $i, j > 0$. But then $g_j^{-1}g_i e = e$ for all $i, j > 0$. Since the $g_i$ are pairwise distinct, this implies the stabiliser of $e$ is infinite, which is a contradiction. The claim follows. 
    \end{proof}

    Suppose now that $U_2$ is not well-founded. Then there exists a strictly descending chain $b_1 \supset b_2 \supset \ldots$ in $U_2$. But then $\bigcap_i \Psi(b_i) = \emptyset$, by Claim~\ref{claim:dcc}. Now, $\Psi(b_i) \in U_1$ for all $i> 0$, by definition, but then we must have that $v \not\in \Psi(b_i)$ for some $i$. This contradicts the construction of $U_1$, and so $U_2$ must be well-founded. 
    
    Note that $\operatorname{Im}(\Psi)$ is $G$-invariant, and so $G_v$ preserves $U_1$ and thus also preserves $U_2$. But $U_2$, being a well-founded ultrafilter on $\mathscr E(T_2)$, points to a unique vertex in $T_2$, which $G_v$ therefore fixes. It follows that $T_1$ dominates $T_2$. 
\end{proof}

\subsection{Proof of Theorem~\ref{thm:alg-vs-graph-relacc}} We are now able to prove that algebraic and graph-theoretic relative accessibility coincide in the case of finitely generated groups.

\algaccchar*

\begin{proof}
    First, let suppose that $\Gamma$ is accessible relative to $\per$. By Theorem~\ref{thm:fg-char}, we have that $\br_\per(\Gamma)$ admits a nested, $G$-invariant, $G$-finite additive generating set $E$. Let $T = T(E)$ be the structure tree associated to $E$. 
    Since every $b \in E = E(T)$ has finite coboundary and the action of $G$ on $\Gamma$ is free, we have that the stabiliser of every $b \in E$ is finite. We have by Proposition~\ref{prop:structure-tree-elliptic} that $T$ is $\mathcal P$-elliptic. Note there is a canonical isomorphism $\Phi : \br(T) \cong \br_\per(\Gamma)$, dual to some equivariant map $\varphi: \Gamma \to T$ by Proposition~\ref{prop:map-from-br-tree-to-br-gamma}(\ref{itm:basic-4}).
    
    Let $T'$ be some arbitrary minimal, $\mathcal P$-elliptic $G$-tree with finite edge stabilisers. By Lemma~\ref{lem:br-hom-well-defined} and Proposition~\ref{prop:map-from-br-tree-to-br-gamma}(\ref{itm:basic-1}), there exists a monotone, complement-respecting $G$-equivariant homomorphism 
    $$
    \Psi : \br(T') \to \br_\per(\Gamma)
    $$
    such that $\Psi^{-1}(\mathscr F(\Gamma)) \subset \mathscr F(T')$. 
    Consider the composition $\Theta := \Phi^{-1} \circ \Psi$, and note that $\ker(\Theta) \cap \mathscr E(T') = \emptyset$. Indeed, let $b \in \mathscr E(T')$, and suppose $\Theta(b) = \Phi^{-1} \circ \Psi(b) = \emptyset$. Since $\Phi$ is an isomorphism, this means that $\Psi(b) = \emptyset$, but then $b \in \mathscr F(T')$. Since $|\delta b| = 1$, this contradicts the fact that $T$ is minimal and thus leafless. 
    By Lemma~\ref{lem:br-map-gives-gmap}, we have that there is a $G$-map $T \to T'$. Since $T'$ was arbitrary, it follows that $T$ is a terminal $\mathcal P$-elliptic finite splitting (after possibly passing to a minimal $G$-invariant subtree of $T$), and so $G$ is accessible relative to $\mathcal P$. 

    Conversely, suppose that $T$ is a terminal $\mathcal P$-elliptic finite splitting. We will show that $\br_\per(\Gamma)$ is a finitely generated $G$-module. 
    By Theorem~\ref{thm:nested-gen-set-rel}, we have that $\br_\per(\Gamma)$ admits a nested, $G$-invariant additive generating set $E$. 
    Express $E$ as an ascending union
    $$
    E_1 \subset E_2 \subset \ldots \subset \bigcup_iE_i = E,
    $$
    where each $E_i$ is nested, $G$-invariant, and $G$-finite. Let $T_i$ denote the structure tree of $E_i$, which is a $G$-tree with finite edge stabilisers. 
    Let $M_i$ denote the submodule of $\br_\per(\Gamma)$ generated by $E_i$, so we have  natural isomorphisms of $G$-modules $\br(T_i) \cong M_i$.
    By Proposition~\ref{prop:map-from-br-tree-to-br-gamma}(\ref{itm:basic-4}), fix $z_i \in V(T_i)$ such that $\Phi_{z_i} : \br(T_i) \to \br_\per(\Gamma)$ recovers the natural inclusion $M_i \into \br_\per(\Gamma)$. 
    By assumption, there exists a domination map $f_i : T \to T_i$. Fix $x_0 \in V(T)$, and let $y_i = f_i(x_0)$. By Lemma~\ref{lem:g-map-gives-br-map}, we have that $f_i$ induces a homomorphism $F_i : \br(T_i) \to \br(T)$ such that
    $$
    \Phi_{y_i} = \Phi_{x_0} \circ F_i.
    $$
    Let $M = \operatorname{Im}(\Phi_{x_0})$.
    For every $b \in \br_\per(\Gamma)$, there exists $i > 0$ such that $b \in M_i$. In particular, there exists $c \in \br(T_i)$ such that $\Phi_{z_i}(c) = b$. By Proposition~\ref{prop:map-from-br-tree-to-br-gamma}(\ref{itm:basic-2}), we have that 
    $
    \Phi_{y_i}(c) + \Phi_{z_i}(c)
    $
    is finite. In other words, there exists $b' \in M$ such that $b + b'$ is finite. 
    Since $b$ was arbitrary, we have that 
    $$
    \br_\per(\Gamma) = M + \mathscr F(\Gamma).
    $$
    In particular, $\br_\per(\Gamma)$ is $G$-finitely generated, and so by Theorem~\ref{thm:fg-char} we conclude that $\Gamma$ is accessible relative to $\per$. 
\end{proof}

\section{Cycles in the cone-off}

In this section we will apply the cone-off construction to deduce a relative variant of Hamann's accessibility theorem \cite{hamann2018accessibility}.

\subsection{The cycle space of a graph} First, we recall some basic definitions and terminology related to the cycle space of a graph. 
Given a graph $\Gamma$, write 
$$
\Z_2^{E(\Gamma)} = \prod_{E(\Gamma)}\Z_2,
$$
equipped with the product topology. By Tychonoff's theorem, this is a compact topological abelian group. The \emph{cycle space} $\cyc(\Gamma)$ is the subgroup of $\Z_2^{E(\Gamma)}$ consisting of all finite Eulerian subgraphs, or equivalently the subgroup generated by all simple cycles. Note that $\cyc(\Gamma)$ can be identified with the first simplicial homology group of $\Gamma$ with $\Z_2$-coefficients.

\subsection{Topological generation}\label{sec:top-gen}

We wish to work with a slightly weaker notion of generating set for the cycle space, taking advantage of the fact that $\Z_2^{E(\Gamma)}$ is a compact topological abelian group. This will be useful for applications. 
We introduce the following terminology.

\begin{definition}[Topological generation]\label{def:topological-finite-gen}
    Let $\Gamma$ be a graph. We say that a subset $S \subset \cyc(\Gamma)$ is a \emph{topological generating set} if 
    $$
    \cyc(\Gamma) \subset \overline {\langle  S \rangle}.
    $$
    If $\Gamma$ is a $G$-graph, then we say that $\cyc(\Gamma)$ is \emph{topologically $G$-finitely generated} if there exists a $G$-invariant, $G$-finite topological generating set $S \subset \cyc(\Gamma)$.
\end{definition}

To distinguish topological generation from the case where $\langle S \rangle = A$, we may refer to the latter as \emph{algebraic generation}. 
We will be interested in the case where $A$ is the cycle space of $\Gamma$. 

\begin{remark}
    When $\Gamma$ is locally finite, the closure  $\overline {\cyc(\Gamma)}$ of the cycle space $\cyc(\Gamma)$ in $\Z_2^{E(\Gamma)}$  can be identified with the \emph{topological cycle space} of $\Gamma$, as defined by Diestel and K\"uhn \cite{diestel2004infinite}.
\end{remark}

\begin{remark}\label{rem:derivations}
    In the context of finitely generated groups, topological generation of the cycle space has already appeared in the literature under a different guise. Let $G$ be a finitely generated group and $\Gamma$ a Cayley graph of $G$, and suppose $\cyc(\Gamma)$ is topologically $G$-finitely generated. It is not too hard to see that this is equivalent to saying that $G$ admits a free and cocompact action on a simplicial 2-complex $K$ such that the map
    $$
    H_1(K;\Z_2 ) \to H_1^\infty(K;\Z_2 )
    $$
    vanishes, where $H_\ast^\infty$ denotes \emph{locally finite homology}; see \cite[\S11]{geoghegan2008topological} for a definition. Since $\Z_2 $ is a field, taking the dual we see that this is equivalent to the natural map
    $$
    H^1_{\mathrm{c}}(K;\Z_2 ) \to H^1(K;\Z_2 )
    $$
    vanishing, where $H^\ast_{\mathrm{c}}$ denotes \emph{compactly supported cohomology}. Groups with such an action are considered in \cite{groves1991remarks} by Groves and Swarup. Such groups are characterised algebraically amongst finitely generated groups as those which are \emph{finitely defined by derivations over $\Z_2 $}. This property is strictly weaker than being $\mathrm{FP}_2$ over $\Z_2 $ (sometimes known as being \emph{almost finitely presented}), which is well-known to be equivalent to the cycle space $\cyc(\Gamma)$ being algebraically $G$-finitely generated.
\end{remark}

Note that topological finite generation is strictly weaker than asking $A$ itself to be finitely generated as a $G$-module, as can be seen in the following examples.

\begin{example}
    We give two examples:
    \begin{enumerate}
        \item Consider the planar graph $\Gamma$ depicted in Figure~\ref{fig:topfgnotfg}. The cycle space of $\Gamma$ is topologically generated by cycles of length 4, but any algebraic generating set of $\cyc(\Gamma)$ must contain arbitrarily long cycles.

        \item For a quasi-transitive example, consider any locally finite Cayley graph $\Gamma$ of the group $G = \Z \wr \Z$. We have that $\cyc(\Gamma)$ is topologically $G$-finitely generated, but not algebraically $G$-finitely generated, since $G$ is not $\mathrm{FP}_2$ over $\Z_2 $ \cite{groves1991remarks}.
    \end{enumerate}
\end{example}

\begin{figure}
     \centering
    \input{figs/planar-topgen-example}
    \caption{The cycle space of this planar graph is topologically generated by cycles of length 4, but any algebraic generating set must contain arbitrarily long cycles.}
    \label{fig:topfgnotfg}
\end{figure}

\subsection{Cycles and relative accessibility}

We now discuss how the cycle space influences relative accessibility. First, we state Hamann's theorem \cite{hamann2018accessibility}, which is a combinatorial formulation of Dunwoody's accessibility theorem \cite{dunwoody1985accessibility}. 

\begin{theorem}[Hamann]\label{thm:hamann-top}
    Let $\Gamma$ be a connected, 2-edge-connected $G$-graph. Suppose that $\cyc(\Gamma)$ is topologically $G$-finitely generated. Then $\br(\Gamma)$ is $G$-finitely generated. 
\end{theorem}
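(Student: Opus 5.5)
The plan is to rerun Hamann's proof from \cite{hamann2018accessibility} essentially verbatim. We isolate every place where the hypothesis on $\cyc(\Gamma)$ is used and check that each use only needs information about finitely many coordinates of $\Z_2^{E(\Gamma)}$. Once that is checked, algebraic generation can be replaced by topological generation. Throughout, fix a $G$-invariant, $G$-finite topological generating set $S \subset \cyc(\Gamma)$. Since $S$ is $G$-finite, there is a uniform bound $\ell$ on the number of edges in each element of $S$.

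The basic observation is the following. Let $c \in \cyc(\Gamma)$ and let $A \subset E(\Gamma)$ be finite. Since $c \in \overline{\langle S\rangle}$ and the basic open neighbourhood $\{x : x|_A = c|_A\}$ of $c$ is open, there is a finite sum $s_1 + \dots + s_m$ of elements of $S$ that agrees with $c$ on $A$. Hamann's argument uses the cycle space in two ways. First, it uses parity: every element of $\cyc(\Gamma)$ meets every finite coboundary $\delta b$ in an even number of edges. Second, it rewrites a given cycle (typically one built from a path crossing a cut, closed up inside $\Gamma[b]$ or $\Gamma[b^\ast]$) as a sum of short generators. From such a rewriting it extracts a generator of length at most $\ell$ that crosses $\delta b$ in a prescribed way, and this is what bounds how far the cut can wander. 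In every instance, the conclusion is a statement about how the cycle meets a fixed finite edge set $A$, namely $\delta b$, or $\delta b$ together with finitely many nearby edges. So I would replace ``write $c$ as a finite sum of elements of $S$'' by ``choose a finite sum of elements of $S$ agreeing with $c$ on $A$''. Every parity or crossing count on $A$ is then unchanged.

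With this substitution the argument runs as in \cite{hamann2018accessibility}. First, 2-edge-connectivity and the short generators together give a uniform bound, depending only on $\ell$ and the local structure of $\Gamma$, on the coboundary of a cut that separates a given pair of ends once any finite-coboundary cut separates them. This is accessibility in the sense of Definition~\ref{def:TW}. Second, we pass to $G$-finite generation of $\br(\Gamma)$ exactly as in \cite[Prop.~6.2]{thomassen1993vertex}. That is, we take a $G$-finite generating set of $\br_n(\Gamma)$ from Corollary~\ref{cor:dicks-dunwoody-gfinite}, and combine it with Propositions~\ref{prop:separating-generators} and \ref{prop:g-finite-criterion} to absorb the finite cuts.

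The step I expect to be the main obstacle is verifying that no step of Hamann's proof secretly uses the \emph{support} of the decomposition rather than only its restriction to a finite set. A typical danger is an argument that the generators in a decomposition lie in a bounded neighbourhood of the original cycle, or an induction on the number of summands. With topological generation, the approximating sums may contain generators far from $A$, and their number is not controlled. My first attempt at resolving this would be to discard all summands disjoint from $A$, since these do not affect anything measured on $A$. Any remaining argument should then be phrased purely in terms of the finitely many summands meeting $A$, each of which lies within distance $\ell$ of $A$. If some step genuinely needs more than $A$, I would enlarge $A$ to the finite edge set of the relevant bounded region before choosing the approximation.
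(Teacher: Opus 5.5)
Your finite-window substitution is the right one, and it is exactly the modification the paper makes. Proposition~\ref{prop:top-gen-char} gives, for a cycle $C$ and a finite edge set $U$, generators $C_1,\dots,C_n\in S$ with $C+\sum_i C_i$ vanishing on $U$. So $C=\sum_j D_j$ with each $D_j$ either in $S$ or disjoint from $U$, and since Lemma~\ref{lem:hamann-2.4} only uses parity on $\delta b$, taking $U=\delta b$ is enough. The gap is in the argument you insert this into, which is not Hamann's and does not work here. Your first step is a uniform bound on $|\delta b|$ for end-separating cuts, obtained from ``short generators and 2-edge-connectivity''. It is asserted with no mechanism, yet it is essentially the whole theorem. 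Your second step goes through ends: Definition~\ref{def:TW}, Proposition~\ref{prop:g-finite-criterion} and \cite[Prop.~6.2]{thomassen1993vertex}. All of these need $\Gamma$ to be locally finite; for instance, Proposition~\ref{prop:g-finite-criterion} uses compactness of $\Omega(\Gamma)$. The statement has no such hypothesis, and its one use in the paper is for the cone-off $\widehat\Gamma_\per$, whose cone vertices have infinite degree. So the end-based route cannot deliver the theorem in the generality required.

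The missing idea is Hamann's actual mechanism, which never mentions ends. Take the Dicks--Dunwoody chain $E_1\subset E_2\subset\cdots$ of nested, $G$-invariant sets of tight cuts, with $E_n$ generating $\br_n(\Gamma)$. Show that each $E_n$ has at most $2^{1+\ell}N$ orbits, where $N=|S/G|$; then the chain stabilises and $G$-finite generation of $\br(\Gamma)$ follows directly.

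\begin{itemize}
\item If $E_n$ had more orbits, counting the extremal elements of the finite chains from Lemma~\ref{lem:hamann-2.3} yields some $b_0\in E_n$ that is never the minimum or maximum of $E_{b_0,V(C)}$ for any $C\in S$ it partitions.
\item Let $b_1$ be the immediate predecessor of $b_0$ in one such chain. The paper's claim shows that $b_1$ still coincides with $b_0$, and is still its immediate predecessor in $E_{b_0,V(C')}$, for every $C'\in S$ sharing an edge of $\delta b_0$ with the previous generator.
\item Tightness of $b_0$ gives, for $e,f\in\delta b_0$, a simple cycle meeting $\delta b_0$ exactly in $\{e,f\}$. Decompose it as above with $U=\delta b_0$ and apply Lemma~\ref{lem:hamann-2.4}. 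This gives an alternating chain of generators in $S$ from $e$ to $f$ across $\delta b_0$, so $f\in\delta b_1$.
\item Hence $\delta b_0\subseteq\delta b_1$, which together with $b_1\subsetneq b_0$ and tightness forces $b_1=b_0$, a contradiction.
\end{itemize}

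This decomposition is the only place the cycle-space hypothesis enters, and its finite window is $\delta b_0$. That is the argument you need to carry out, not the end-based one.
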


Note that our statement is in terms of \emph{topological} $G$-finite generation, whereas the original statement in \cite{hamann2018accessibility} only makes reference to algebraic $G$-finite generation. This upgrade only requires minor modifications to Hamann's proof, which we discuss in Appendix~\ref{app:hamann}. 

First, we record the following proposition, which allows us to weaken our assumptions on the peripheral system somewhat. 

\begin{proposition}\label{prop:thin-to-tame-cycles}
    Let $\Gamma$ be a connected, locally finite, quasi-transitive $G$-graph, and $\per$ a thin, $G$-invariant peripheral system. 
    Suppose $\cyc(\widehat \Gamma_\per)$ is topologically $G$-finitely generated. Then $\per$ is tame. 
\end{proposition}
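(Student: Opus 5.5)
We argue by contradiction. Suppose $\per$ is not tame, so there is $b \in \br_\per(\Gamma)$ and infinitely many distinct $H_1, H_2, \ldots \in \per$, each meeting both $b$ and $b^\ast$. Since $\per$ is thin, all but finitely many of the $H_i$ are infinite. Discarding finitely many and passing to a subsequence, we may assume each $H_i$ is infinite and, say, $H_i \cap b^\ast$ is finite and nonempty for all $i$.

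Fix a $G$-finite, $G$-invariant topological generating set $S$ of $\cyc(\widehat\Gamma_\per)$. Since $G$ acts on $\widehat\Gamma_\per$ with finitely many orbits of cone vertices and $\Gamma$ is quasi-transitive, there is a uniform bound $D$ on the diameter, measured in $\Gamma$, of the $\Gamma$-part of any $s \in S$. For each $i$ we pick $x_i \in H_i \cap b$ and $y_i \in H_i \cap b^\ast$, together with a path $P_i$ in $\Gamma$ from $y_i$ to $x_i$. The cycle $C_i = v_{H_i} y_i \cup P_i \cup x_i v_{H_i}$ lies in $\cyc(\widehat\Gamma_\per)$, so $C_i \in \overline{\langle S\rangle}$.

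The key step is a parity or cut argument. Extend $b$ to $\widehat b \subset V(\widehat\Gamma_\per)$ by putting $v_H \in \widehat b$ whenever $H \cap b$ is infinite. In general $\delta\widehat b$ is infinite; it consists of $\delta b$ together with, for each $H$, the edges from $v_H$ to the finitely many points of $H$ on the "minority" side. Every cycle meets every coboundary in an even number of edges. We want to use $S$ to show that the offending cone edges must lie near $\delta b$. Consider the finite set $E_0 = \delta b$ (finitely many edges). For the cycle $C_i$, the edge $y_i v_{H_i}$ lies in $\delta\widehat b$. Since $C_i$ is a limit of finite sums of $G$-translates of elements of $S$, and evaluation at finitely many coordinates is continuous, there is a finite sum $\sum g_k s_k$ agreeing with $C_i$ on the edges at $v_{H_i}$ together with $E_0$. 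I would then argue that some summand $g_k s_k$ passes through the edge $y_i v_{H_i}$. Parity on $\delta\widehat b$ restricted to the star of $v_{H_i}$, combined with $H_i \cap b^\ast$ being finite, then forces the $\Gamma$-part of that translate to cross $\delta b$. The translate contains $v_{H_i}$, and its $\Gamma$-part is connected in the coarse sense with diameter at most $D$, so $H_i$ meets $B(V(\delta b), D')$ for a uniform $D'$.

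Once every $H_i$ meets the fixed finite set $B(V(\delta b), D')$, which is finite by local finiteness, thinness fails, because infinitely many distinct peripherals would meet a finite set. This is the contradiction we want.

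The main obstacle is making the middle step rigorous, because topological generation only gives local agreement, while we need to localise a summand near $\delta b$. My first attempt would be to count. Look at the star of $v_{H_i}$ intersected with $\delta\widehat b$, which is the finite set of edges from $v_{H_i}$ to $H_i \cap b^\ast$. Pick a large finite window $W$ containing these edges and $\delta b$. In the approximating sum, any summand that touches $v_{H_i}$ but whose $\Gamma$-part stays within $b$ or within $b^\ast$ (away from $\delta b$) meets $\delta\widehat b$ near $v_{H_i}$ in an even number of edges. If no summand through $v_{H_i}$ crossed $\delta b$, the parity of $C_i$ on those star edges would then be even. But $C_i$ uses exactly one such edge, which is odd. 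Hence some summand through $v_{H_i}$ has $\Gamma$-vertices in both $b$ and $b^\ast$. For a single summand, a cycle meeting $\delta\widehat b$ evenly, this is exactly what we need, provided its $\Gamma$-part between the two sides yields a point of $\delta b$ within distance $D$. I would handle that last detail using the bounded diameter of the $\Gamma$-part, taking $D'$ large enough to absorb the passage through $v_{H_i}$.
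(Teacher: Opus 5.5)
Your argument is correct, but it is organised differently from the paper's proof.

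\textbf{The paper's route.} It first uses Lemma~\ref{lem:unbounded-components-tame} to turn non-tameness into a single infinite peripheral $H$ that has a finite $r$-coarse component $U$ at every large scale $r$. It then joins $U$ to a second coarse component $W$ by a cycle through $v_H$. Parity on the finite set of cone edges from $v_H$ to $U$ shows that this cycle does not lie in $\overline{\langle S\rangle}$.

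\textbf{Your route.} You skip the coarse-component detour and work with the non-tameness witness $b$ directly. Let $E_i^\ast$ be the finite set of cone edges from $v_{H_i}$ to $H_i\cap b^\ast$, and let $\pi_i$ count edges in $E_i^\ast$ mod $2$. Then $\pi_i$ is continuous and $\pi_i(C_i)=1$. It also vanishes on every $gs$ whose cone-neighbours at $v_{H_i}$ all lie on one side of $b$, by evenness of degree at $v_{H_i}$. Hence some $gs$ has neighbours $a\in H_i\cap b$ and $a'\in H_i\cap b^\ast$ with $d_\Gamma(a,a')\le D$. So $H_i$ meets the finite set $B(V(\delta b),D)$, and infinitely many $H_i$ doing so contradicts thinness.

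\textbf{What each buys.} Your route is more direct and more general. It never really needs quasi-transitivity. Even your reduction to infinite $H_i$ is superfluous: ellipticity of $b$ already lets you assume $H_i\cap b^\ast$ is finite, and you need neither $\widehat b$ nor $v_{H_i}\in\widehat b$. The paper's route instead isolates an obstruction intrinsic to a single peripheral. That formulation is what yields the stronger conclusion (coarse connectedness of peripherals) under algebraic generation, as in the subsequent remark.

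\textbf{Two things to tidy.}
\begin{enumerate}
\item You cannot ask the approximating sum to agree with $C_i$ on all edges at $v_{H_i}$, since that set is infinite. Only agreement on $E_i^\ast$ is needed, as in your last paragraph, and $\delta b$ need not be in the window.
\item $D$ exists because $S$ is $G$-finite, each element is a finite subgraph, and $G$ acts by isometries of $\Gamma$; quasi-transitivity plays no role. Measuring spread by $\Gamma$-diameter rather than support size is exactly right, since short cycles in $\widehat\Gamma_\per$ can pass through other cone vertices.
\end{enumerate}
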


\begin{proof}

    Suppose $\per$ is not tame. By Lemma~\ref{lem:unbounded-components-tame}, we have that some $H \in \per$ is infinite, but does not have unbounded coarse components. That is, for every $r > 0$ we have that every connected component of $\Gamma[B(H,r)]$ is  finite. However, it is then easy to form cycles which cannot be filled by short cycles, even topologically. Indeed, fix some $G$-finite subset $S \subset \cyc(\widehat \Gamma_\per)$, and let $r > 0$ be much larger than the size of the support of any cycle in $S$. Let $U, W \subset \Gamma[B(H,r)]$ be two connected components. Let $x \in H \cap U$, $y \in H \cap W$. Consider a path $P$ through $\Gamma$ connecting $x$ to $y$, and complete this to a cycle  $C$ in $\widehat \Gamma_\per$ by adding the two edges joining $x$ and $y$ to the cone-vertex $v_H$. Adding any elements of $S$ to $C$, a parity argument tells us that there will always be two edges leftover which are joining $v_H$ to $U$ and $W$, respectively. See Figure~\ref{fig:thin-to-tame-cycles} for an illustration.
\end{proof}

\begin{remark}
    The proof of Proposition~\ref{prop:thin-to-tame-cycles} also applies to algebraic $G$-finite generation. In this case, the conclusion is stronger: the peripherals must be coarsely connected. This can be applied to give another proof of a result of Osin, which states that if a finitely generated group is finitely presented relative to a collection $\mathcal P$ of peripheral subgroups, then $\mathcal P$ is finite and each $H \in \mathcal P$ is finitely generated \cite[Thm.~1.1]{osin2004relhyp}. 
\end{remark}

\begin{figure}
     \centering
    \tikzset{every picture/.style={line width=0.75pt}} 

\begin{tikzpicture}[x=0.75pt,y=0.75pt,yscale=-1,xscale=1]

\draw  [color={rgb, 255:red, 128; green, 128; blue, 128 }  ,draw opacity=1 ][fill={rgb, 255:red, 231; green, 231; blue, 231 }  ,fill opacity=1 ] (254.19,103.1) .. controls (335.52,102.73) and (332.23,79.81) .. (374.07,93.63) .. controls (415.91,107.46) and (419.2,158.22) .. (389.11,161.63) .. controls (359.02,165.04) and (250.9,183.04) .. (183.67,171.29) .. controls (116.45,159.55) and (172.86,103.48) .. (254.19,103.1) -- cycle ;
\draw  [color={rgb, 255:red, 74; green, 144; blue, 226 }  ,draw opacity=1 ][fill={rgb, 255:red, 208; green, 225; blue, 240 }  ,fill opacity=1 ][dash pattern={on 3.75pt off 1.5pt}] (188.38,140.7) .. controls (188.38,126.89) and (207.42,115.7) .. (230.92,115.7) .. controls (254.42,115.7) and (273.46,126.89) .. (273.46,140.7) .. controls (273.46,154.51) and (254.42,165.71) .. (230.92,165.71) .. controls (207.42,165.71) and (188.38,154.51) .. (188.38,140.7) -- cycle ;
\draw  [color={rgb, 255:red, 74; green, 144; blue, 226 }  ,draw opacity=1 ][fill={rgb, 255:red, 208; green, 225; blue, 240 }  ,fill opacity=1 ][dash pattern={on 3.75pt off 1.5pt}] (278.64,127.06) .. controls (278.64,112.68) and (296.21,101.02) .. (317.89,101.02) .. controls (339.57,101.02) and (357.14,112.68) .. (357.14,127.06) .. controls (357.14,141.45) and (339.57,153.11) .. (317.89,153.11) .. controls (296.21,153.11) and (278.64,141.45) .. (278.64,127.06) -- cycle ;
\draw  [color={rgb, 255:red, 74; green, 144; blue, 226 }  ,draw opacity=1 ][fill={rgb, 255:red, 176; green, 207; blue, 237 }  ,fill opacity=1 ] (212,140.7) .. controls (212,134.71) and (220.47,129.86) .. (230.92,129.86) .. controls (241.37,129.86) and (249.84,134.71) .. (249.84,140.7) .. controls (249.84,146.69) and (241.37,151.55) .. (230.92,151.55) .. controls (220.47,151.55) and (212,146.69) .. (212,140.7) -- cycle ;
\draw  [color={rgb, 255:red, 74; green, 144; blue, 226 }  ,draw opacity=1 ][fill={rgb, 255:red, 176; green, 207; blue, 237 }  ,fill opacity=1 ] (298.97,127.06) .. controls (298.97,121.08) and (307.44,116.22) .. (317.89,116.22) .. controls (328.34,116.22) and (336.81,121.08) .. (336.81,127.06) .. controls (336.81,133.05) and (328.34,137.91) .. (317.89,137.91) .. controls (307.44,137.91) and (298.97,133.05) .. (298.97,127.06) -- cycle ;
\draw    (258.89,57.08) ;
\draw [shift={(258.89,57.08)}, rotate = 0] [color={rgb, 255:red, 0; green, 0; blue, 0 }  ][fill={rgb, 255:red, 0; green, 0; blue, 0 }  ][line width=0.75]      (0, 0) circle [x radius= 3.35, y radius= 3.35]   ;
\draw    (258.89,57.08) -- (305.43,128.11) ;
\draw [shift={(305.43,128.11)}, rotate = 56.77] [color={rgb, 255:red, 0; green, 0; blue, 0 }  ][fill={rgb, 255:red, 0; green, 0; blue, 0 }  ][line width=0.75]      (0, 0) circle [x radius= 1.34, y radius= 1.34]   ;
\draw [shift={(258.89,57.08)}, rotate = 56.77] [color={rgb, 255:red, 0; green, 0; blue, 0 }  ][fill={rgb, 255:red, 0; green, 0; blue, 0 }  ][line width=0.75]      (0, 0) circle [x radius= 1.34, y radius= 1.34]   ;
\draw    (258.89,57.08) -- (241.03,140.61) ;
\draw [shift={(241.03,140.61)}, rotate = 102.07] [color={rgb, 255:red, 0; green, 0; blue, 0 }  ][fill={rgb, 255:red, 0; green, 0; blue, 0 }  ][line width=0.75]      (0, 0) circle [x radius= 1.34, y radius= 1.34]   ;
\draw [shift={(258.89,57.08)}, rotate = 102.07] [color={rgb, 255:red, 0; green, 0; blue, 0 }  ][fill={rgb, 255:red, 0; green, 0; blue, 0 }  ][line width=0.75]      (0, 0) circle [x radius= 1.34, y radius= 1.34]   ;
\draw    (258.89,57.08) -- (225.04,140.61) ;
\draw [shift={(225.04,140.61)}, rotate = 112.06] [color={rgb, 255:red, 0; green, 0; blue, 0 }  ][fill={rgb, 255:red, 0; green, 0; blue, 0 }  ][line width=0.75]      (0, 0) circle [x radius= 1.34, y radius= 1.34]   ;
\draw [shift={(258.89,57.08)}, rotate = 112.06] [color={rgb, 255:red, 0; green, 0; blue, 0 }  ][fill={rgb, 255:red, 0; green, 0; blue, 0 }  ][line width=0.75]      (0, 0) circle [x radius= 1.34, y radius= 1.34]   ;
\draw    (258.89,57.08) -- (214.7,137.77) ;
\draw [shift={(214.7,137.77)}, rotate = 118.71] [color={rgb, 255:red, 0; green, 0; blue, 0 }  ][fill={rgb, 255:red, 0; green, 0; blue, 0 }  ][line width=0.75]      (0, 0) circle [x radius= 1.34, y radius= 1.34]   ;
\draw [shift={(258.89,57.08)}, rotate = 118.71] [color={rgb, 255:red, 0; green, 0; blue, 0 }  ][fill={rgb, 255:red, 0; green, 0; blue, 0 }  ][line width=0.75]      (0, 0) circle [x radius= 1.34, y radius= 1.34]   ;
\draw    (258.89,57.08) -- (333.17,123.56) ;
\draw [shift={(333.17,123.56)}, rotate = 41.83] [color={rgb, 255:red, 0; green, 0; blue, 0 }  ][fill={rgb, 255:red, 0; green, 0; blue, 0 }  ][line width=0.75]      (0, 0) circle [x radius= 1.34, y radius= 1.34]   ;
\draw [shift={(258.89,57.08)}, rotate = 41.83] [color={rgb, 255:red, 0; green, 0; blue, 0 }  ][fill={rgb, 255:red, 0; green, 0; blue, 0 }  ][line width=0.75]      (0, 0) circle [x radius= 1.34, y radius= 1.34]   ;
\draw    (258.89,57.08) -- (328.94,130.38) ;
\draw [shift={(328.94,130.38)}, rotate = 46.3] [color={rgb, 255:red, 0; green, 0; blue, 0 }  ][fill={rgb, 255:red, 0; green, 0; blue, 0 }  ][line width=0.75]      (0, 0) circle [x radius= 1.34, y radius= 1.34]   ;
\draw [shift={(258.89,57.08)}, rotate = 46.3] [color={rgb, 255:red, 0; green, 0; blue, 0 }  ][fill={rgb, 255:red, 0; green, 0; blue, 0 }  ][line width=0.75]      (0, 0) circle [x radius= 1.34, y radius= 1.34]   ;
\draw    (258.89,57.08) -- (315.3,126.97) ;
\draw [shift={(315.3,126.97)}, rotate = 51.09] [color={rgb, 255:red, 0; green, 0; blue, 0 }  ][fill={rgb, 255:red, 0; green, 0; blue, 0 }  ][line width=0.75]      (0, 0) circle [x radius= 1.34, y radius= 1.34]   ;
\draw [shift={(258.89,57.08)}, rotate = 51.09] [color={rgb, 255:red, 0; green, 0; blue, 0 }  ][fill={rgb, 255:red, 0; green, 0; blue, 0 }  ][line width=0.75]      (0, 0) circle [x radius= 1.34, y radius= 1.34]   ;
\draw [color={rgb, 255:red, 74; green, 144; blue, 226 }  ,draw opacity=1 ]   (251.67,139.67) -- (268.35,137.48) ;
\draw [shift={(270.33,137.22)}, rotate = 172.54] [color={rgb, 255:red, 74; green, 144; blue, 226 }  ,draw opacity=1 ][line width=0.75]    (4.37,-1.32) .. controls (2.78,-0.56) and (1.32,-0.12) .. (0,0) .. controls (1.32,0.12) and (2.78,0.56) .. (4.37,1.32)   ;

\draw (194.09,134.24) node [anchor=north west][inner sep=0.75pt]  [font=\small,color={rgb, 255:red, 74; green, 144; blue, 226 }  ,opacity=1 ]  {$U$};
\draw (338.07,121.79) node [anchor=north west][inner sep=0.75pt]  [font=\small,color={rgb, 255:red, 74; green, 144; blue, 226 }  ,opacity=1 ]  {$W$};
\draw (265.68,43.61) node [anchor=north west][inner sep=0.75pt]  [font=\normalsize]  {$v_{H}$};
\draw (255.74,140.79) node [anchor=north west][inner sep=0.75pt]  [font=\small,color={rgb, 255:red, 74; green, 144; blue, 226 }  ,opacity=1 ]  {$r$};

\end{tikzpicture}
    \caption{Illustration of the proof of Proposition~\ref{prop:thin-to-tame-cycles}.}
    \label{fig:thin-to-tame-cycles}
\end{figure}

We now deduce a relative variant of Theorem~\ref{thm:hamann-top}, which is the last main result of this paper. 

\relcycs*

\begin{proof}
    By Proposition~\ref{prop:thin-to-tame-cycles}, we have that $\per$ is tame. 
    First, note that we may assume without loss of generality that the cone-off $\widehat \Gamma_\per$ is 2-edge-connected as $G$ acts on $E(\widehat \Gamma_\per)$ with finitely many orbits, so single-edge cuts are inconsequential as there are only $G$-finitely many of them.
    
    By Theorem~\ref{thm:hamann-top}, $\br(\widehat \Gamma_\per)$ is $G$-finitely generated. But then $\br_\per(\Gamma)$ is also $G$-finitely generated by Proposition~\ref{prop:ring-isomorphism}, and so by Theorem~\ref{thm:fg-char} we have that $\Gamma$ is accessible relative to $\per$. 
\end{proof}

\bibliography{references}


\appendix

\section{Hamann's accessibility theorem and topological cycles}\label{app:hamann}

In this appendix we briefly discuss Hamann's accessibility theorem, and observe that the hypotheses can be weakened to relate to only requiring topological generation of the cycle space. We restate Hamann's theorem below, as it appears in \cite{hamann2018accessibility}. 

\begin{theorem}[Hamann]\label{thm:hamann}
    Let $\Gamma$ be a connected, 2-edge-connected $G$-graph. Suppose that $\cyc(\Gamma)$ is $G$-finitely generated. Then $\br(\Gamma)$ is $G$-finitely generated. 
\end{theorem}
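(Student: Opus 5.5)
This is Hamann's theorem exactly as stated in \cite{hamann2018accessibility}, so formally no new argument is needed; still, the appendix needs to see where the cycle-space hypothesis enters, so here is the plan I would follow to reprove it. Fix a $G$-finite generating set $S\subset\cyc(\Gamma)$, and let $n$ bound the lengths of the cycles in $S$. The goal is to show that $\br(\Gamma)=\br_k(\Gamma)$ for some $k$ depending only on $S$ (and on the orbit data of $\Gamma$). Corollary~\ref{cor:dicks-dunwoody-gfinite} then gives a nested, $G$-invariant, $G$-finite additive generating set. That corollary needs $G$ to act on $E(\Gamma)$ with finitely many orbits. If this is not part of the standing hypotheses, I would recover it from $\Gamma$ being a connected $G$-graph with $G$-finitely generated cycle space, restricting to the quasi-transitive setting the paper uses throughout.

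The key step is a Dunwoody-style tracks argument, in the discrete form used by Hamann. Take a tight cut $b$ with $|\delta b|$ minimal among cuts separating a given pair of ends. For each cycle $C$ in $S$, $|C\cap\delta b|$ is even. The cycles of $S$ play the role of 2-cells, so $\delta b$ is a ``pattern''. I would then show that every $b\in\br(\Gamma)$ can be written, modulo finite sets, as a sum of cuts each of whose coboundary meets each cycle of $G\cdot S$ in a controlled way. This uses the usual crossing and minimal-cut exchange: if $b$ and $gb$ cross, one replaces them by corners $b\cap gb$, $b^\ast\cap gb^\ast$, and so on. The coboundaries of these corners satisfy a submodularity inequality $|\delta(b\cap c)|+|\delta(b^\ast\cap c^\ast)|\le|\delta b|+|\delta c|$.

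Next I would use the generating cycles to bound how many ``parallel'' tracks there can be. A cut separating ends, restricted to the 2-complex obtained by gluing discs along $G\cdot S$, is dual to a track. Dunwoody's counting then bounds the number of non-parallel disjoint tracks in terms of the number of $G$-orbits of 2-cells. The result is a uniform $k$ such that every pair of separable ends is separated by a cut with $|\delta b|\le k$. By Proposition~\ref{prop:g-finite-criterion}, $\br_k(\Gamma)$ together with finite sets then equals $\br(\Gamma)$. Since finite sets lie in $\br_k$ once $k$ is at least the maximum vertex degree orbit-wise, this gives $\br(\Gamma)=\br_k(\Gamma)$. Two-edge-connectedness is used to rule out degenerate single-edge bridges, which would make tracks trivial.

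I expect the main obstacle to be the track-counting bound, namely transferring Dunwoody's argument from 2-complexes to graphs with only a cycle generating set. For the topological version needed in the appendix, the crucial observation should be that the parity argument only uses the finite coboundary $\delta b$. Since $\delta b$ is finite, the functional $C\mapsto|C\cap\delta b|\bmod 2$ is continuous on $\Z_2^{E(\Gamma)}$. It therefore vanishes on $\overline{\langle S\rangle}$ as soon as it vanishes on $S$, so every step that used algebraic generation goes through unchanged.
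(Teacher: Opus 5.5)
There is a genuine gap. You prove only the reduction, and the step that carries the whole theorem is asserted. Assuming local finiteness and bounded degree, a uniform $k$ bounding the smallest cut between any two separable ends does give $\br(\Gamma)=\br_k(\Gamma)$ via Proposition~\ref{prop:g-finite-criterion}. But producing that $k$ \emph{is} accessibility, and your track paragraph does not produce it.

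Dunwoody's count bounds the number of non-parallel tracks in a pattern on a finite 2-complex. In his proof this bounds the number of edges of a reduced graph-of-groups decomposition over finite subgroups. That is a statement about splittings of a group, not about coboundary sizes. For a bare $G$-graph there is no such framework: vertex stabilisers may be infinite (e.g.\ $G=\Aut(\Gamma)$), so patterns need not descend to a finite quotient complex. Nor do you give any mechanism turning ``few non-parallel tracks'' into a bound on $|\delta b|$. The intermediate claim that every $b$ is, modulo finite sets, a sum of cuts meeting $G\cdot S$ ``in a controlled way'' is likewise unspecified.

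Your route also imports local finiteness and quasi-transitivity (ends, Proposition~\ref{prop:g-finite-criterion}, finite sets lying in $\br_k$), which the statement does not assume. Finiteness of edge-orbits, by contrast, is automatic. By 2-edge-connectivity every edge lies on a cycle, and every cycle is a finite sum of translates of elements of $S$.

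The missing idea is Hamann's, reproduced in the appendix. Never pass to ends or minimal cuts; instead bound the Dicks--Dunwoody chain $E_1\subset E_2\subset\cdots$ of Theorem~\ref{thm:dicks-dunwoody} directly. Let $\ell$ bound the lengths of cycles in $S$ and let $N=|S/G|$. One shows that each $E_n$ has at most $2^{1+\ell}N$ orbits, so the increasing chain of $G$-invariant sets stabilises and a single $G$-finite $E_n$ generates $\br(\Gamma)$.

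\begin{itemize}
\item \textbf{Counting.} By Lemma~\ref{lem:hamann-2.3}, the elements of $E_n$ inducing a fixed bipartition of $V(C)$, for $C\in G\cdot S$, form a finite chain. So at most $2^{1+\ell}N$ orbits are extremal in some such chain. If there were more orbits, some $b_0$ would never be extremal.
\item \textbf{Propagation.} Take the immediate predecessor $b_1$ of $b_0$ on one generating cycle $D$, and an edge $e\in D\cap\delta b_0\cap\delta b_1$. Apply Lemma~\ref{lem:hamann-2.3} to the endpoints of an edge of $\delta b_0$ shared by two generating cycles. Since $b_0$ is non-extremal on both, $b_1$ remains the immediate predecessor on the second cycle.
\item \textbf{Contradiction.} By tightness, any $f\in\delta b_0$ lies with $e$ on a simple cycle meeting $\delta b_0$ exactly in $\{e,f\}$. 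Write this cycle as a sum of generators; Lemma~\ref{lem:hamann-2.4} then yields a chain of generators from $e$ to $f$ linked through $\delta b_0$, so $f\in\delta b_1$. Hence $\delta b_0\subseteq\delta b_1$, which by tightness forces $b_1=b_0$, a contradiction.
\end{itemize}

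This is also where generation is really used. The parity of $|C\cap\delta b|$ holds for every $C\in\cyc(\Gamma)$ regardless of $S$, so your continuity remark does not address the topological version. What that version needs is Proposition~\ref{prop:top-gen-char} with $U=\delta b_0$, to decompose that one cycle.
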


We will weaken the hypotheses of this theorem, only requiring that $\cyc(\Gamma)$ is topologically $G$-finitely generated, in the sense of Definition~\ref{def:topological-finite-gen}. The proof, however, is essentially the same as Hamann's proof, only requiring minor alterations.
The key observation is that the full power of algebraic generation of the cycle space is never really employed, but instead one can make do applying the following result. 

\begin{proposition}\label{prop:top-gen-char}
    Let $\Gamma$ be a graph, $S \subset \cyc(\Gamma)$. If $S$ is a topological generating set of $\cyc(\Gamma)$, then the following holds:
    \begin{enumerate}
        \item[$(\dagger)$] For all $C \in \cyc(\Gamma)$ and all finite $U \subset E(\Gamma)$, there exists $C_1, \ldots, C_n \in S$ such that $C + \sum_i C_i$ is not supported on $U$.
    \end{enumerate}
    Conversely, if $\Gamma$ is countable and $S$ satisfies $(\dagger)$, then $S$ is a topological generating set of $\cyc(\Gamma)$.
\end{proposition}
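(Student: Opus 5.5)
The plan is to unwind the product topology on $\Z_2^{E(\Gamma)}$. I read the conclusion of $(\dagger)$ as saying that $C+\sum_i C_i$ vanishes on $U$, i.e.\ its support is disjoint from $U$. This is the only reading under which $(\dagger)$ matches the closure condition; the literal wording ``not supported on $U$'' is presumably a typo for ``supported off $U$''. With this reading, both directions should reduce to a statement about basic open neighbourhoods.

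First I record the neighbourhood basis. For $x \in \Z_2^{E(\Gamma)}$, a basis of open neighbourhoods of $x$ is given by the cylinder sets
$$
N(x,U) = \{y \in \Z_2^{E(\Gamma)} : y|_U = x|_U\}, \quad U \subset E(\Gamma) \text{ finite}.
$$
Since we work over $\Z_2$, we have $y \in N(x,U)$ if and only if $x+y$ vanishes on $U$. I also note that the subgroup $\langle S\rangle$ consists exactly of finite sums $\sum_{i=1}^n C_i$ with $C_i \in S$, because every coefficient lies in $\Z_2$. Hence, for any $C$, the statement ``$N(C,U)$ meets $\langle S\rangle$'' is literally the statement that some finite sum $\sum_i C_i$ of elements of $S$ makes $C+\sum_i C_i$ vanish on $U$.

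For the forward direction, suppose $\cyc(\Gamma)\subset\overline{\langle S\rangle}$, and take $C\in\cyc(\Gamma)$ and a finite $U$. Then $N(C,U)$ is an open neighbourhood of a point in the closure, so it meets $\langle S\rangle$, and the previous paragraph gives $(\dagger)$.

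For the converse, the same equivalence run backwards shows that $(\dagger)$ says every basic neighbourhood of every $C\in\cyc(\Gamma)$ meets $\langle S\rangle$. Hence $C\in\overline{\langle S\rangle}$. Countability of $\Gamma$ is not really needed for this, since closure in any topological space is detected by basic neighbourhoods. Its role is only to make $\Z_2^{E(\Gamma)}$ metrisable. Then, by choosing an exhaustion $U_1\subset U_2\subset\cdots$ of $E(\Gamma)$ by finite sets, one obtains a sequence in $\langle S\rangle$ converging to $C$, and this sequential form is the one used in adapting Hamann's argument. I do not expect any genuine obstacle; the only care needed is fixing the correct reading of $(\dagger)$ and checking that $\langle S\rangle$ consists of finite sums.
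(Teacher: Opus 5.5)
Your proof is correct and follows the same idea as the paper: unwind the product topology on $\Z_2^{E(\Gamma)}$ via cylinder neighbourhoods, reading $(\dagger)$ as ``$C+\sum_i C_i$ vanishes on $U$'', which is exactly how the paper's own proof uses it. The only difference is that you test closure against basic neighbourhoods rather than sequences. As you note, this makes the countability hypothesis unnecessary, and it also avoids the paper's tacit use of a convergent sequence $B_i \to C$ in the forward direction, where no countability is assumed.
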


\begin{proof}
    Suppose that $S$ is a topological generating set. Let $C \in \cyc(\Gamma)$ and $U \subset E(\Gamma)$ be arbitrary, with $U$ finite. By (1), there exists a sequence $(B_i)$ in $\langle S \rangle$ such that $B_i \to C$. Then there exists $N \geq 1$ such that for all $i \geq N$ we have that $B_i |_U = C |_U$. In particular, $B_i + C$ is not supported on $U$, and thus $(\dagger)$ holds.

    Now, assume that $(\dagger)$ holds and that $\Gamma$ is countable. Since $\Gamma$ is countable we may choose an enumeration $\{e_1, e_2\ldots\}$ of its edges. Let $E_i = \{e_1, \ldots, e_i\}$, so $E(\Gamma) = \bigcup_iE_i$.  Fix an arbitrary $C \in \cyc(\Gamma)$. For each $i \geq 1$, let $B_i \in \langle S \rangle$ be such that $B_i + C$ is not supported on $E_i$. It is now easy to verify that $B_i \to C$ in the product topology. Since $C$ was arbitrary, we have that $S$ is a topological generating set.
\end{proof}



With the above result in-hand, the rest of the proof is basically identical to Hamann's argument, with one application of Proposition~\ref{prop:top-gen-char} strategically inserted. We will reproduce the argument in its entirety below, for the sake of completeness. 

For the rest of this section, let $\Gamma$ be a fixed 2-edge-connected $G$-graph. By Theorem~\ref{thm:dicks-dunwoody}, we will also fix an increasing chain 
    $
     E _1 \subset  E _2 \subset \ldots
    $
    of nested, $G$-invariant subsets of $\br (\Gamma)$ consisting of tight elements such that $ E _n$ generates $\br_n(\Gamma)$ as a ring. Since $\Gamma$ is 2-edge-connected, $ E _1$ is empty.

Let us introduce some terminology. Two $b, b' \in \br(\Gamma)$ are said \textit{coincide} on a set $X \subset V(\Gamma)$ if $b \cap X = b' \cap X$. We say that $b$ \emph{partitions $X$ non-trivially} if both $b \cap X$ and $b^\ast \cap X$ are non-empty.
We restate the following technical lemmas from \cite{hamann2018accessibility}.

\begin{lemma}[{\cite[Lem.~2.3]{hamann2018accessibility}}]\label{lem:hamann-2.3}
    Let $X \subset V(\Gamma)$ be a non-empty set. Let $n \in \N$. Let $b \in  E _n$ partition $X$ non-trivially. Then the set
    $$
     E _{b, X} := \{b' \in  E _n : \text{$b'$ coincides with $b$ on $X$}\}
    $$
    forms a finite chain under inclusion.
\end{lemma}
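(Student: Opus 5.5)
We need to prove Hamann's Lemma 2.3: given $b \in E_n$ partitioning $X$ non-trivially, the set $E_{b,X}$ of elements of $E_n$ that coincide with $b$ on $X$ is a finite chain under inclusion.

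The plan has two parts: first show that $E_{b,X}$ is totally ordered by inclusion, then show it is finite.

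\textbf{Chain property.} Take $b' \in E_{b,X}$. Since $E_n$ is nested, exactly one of the four relations holds:
$$b\subset b',\quad b'\subset b,\quad b\subset b'^\ast,\quad b^\ast\subset b'.$$
Pick $x\in b\cap X$ and $y\in b^\ast\cap X$, which exist because $b$ partitions $X$ non-trivially. Because $b'$ coincides with $b$ on $X$, we have $x\in b\cap b'$ and $y\in b^\ast\cap b'^\ast$. The first of these rules out $b\subset b'^\ast$, and the second rules out $b^\ast\subset b'$. So $b$ and $b'$ are comparable. The same argument applied to two arbitrary elements $b',b''\in E_{b,X}$ shows they are comparable, since both coincide with $b$ on $X$ and therefore partition $X$ non-trivially at the same points $x,y$. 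Hence $E_{b,X}$ is a chain.

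\textbf{Finiteness.} Every $b'\in E_{b,X}$ contains $x$ and avoids $y$. Fix a finite path $P$ in $\Gamma$ from $x$ to $y$, which exists since $\Gamma$ is connected. Each such $b'$ must have an edge of $\delta b'$ lying on $P$.

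Each $b'$ is also tight with $|\delta b'|\le n$, because $E_n$ consists of tight elements of $\br_n(\Gamma)$. For the bound on the coboundary, I would appeal to the construction in Theorem~\ref{thm:dicks-dunwoody}, in which $E_n$ consists of tight cuts of coboundary at most $n$. If only generation were known, I would instead restrict to the Dicks--Dunwoody sets as constructed.

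By \cite[Prop.~4.1]{thomassen1993vertex}, for each edge $e$ of $P$ there are only finitely many tight $b'$ with $e\in\delta b'$ and $|\delta b'|\le n$. Since $P$ has finitely many edges, $E_{b,X}$ is finite.

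The only delicate point is confirming that elements of $E_n$ really satisfy $|\delta b'|\le n$, rather than merely generating $\br_n(\Gamma)$. I expect this to be the main obstacle, and I would resolve it by citing the form of Dicks--Dunwoody's result, where $E_n$ is a subset of the tight cuts with coboundary at most $n$.
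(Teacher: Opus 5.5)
Your proof is correct. The paper gives no proof of its own and simply cites Hamann, whose argument is the one you give: comparability from nestedness via the witnesses $x\in b\cap X$ and $y\in b^\ast\cap X$, and finiteness from a fixed $x$--$y$ path combined with Thomassen--Woess Prop.~4.1.

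The point you flag is not a real obstacle. In the actual Dicks--Dunwoody construction every element of $E_n$ is tight with $|\delta b'|\le n$, and the paper already relies on this bound implicitly, for instance for the $G$-finiteness in Corollary~\ref{cor:dicks-dunwoody-gfinite}.
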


\begin{lemma}[{\cite[Lem.~2.4]{hamann2018accessibility}}]\label{lem:hamann-2.4}
    Let $C \in \cyc(\Gamma)$ be a simple cycle and $b \in \br(\Gamma)$. Suppose that $C$ intersects $\delta b$ in exactly two edges $e, f \in E(\Gamma)$. Let $\mathcal A \subset \cyc(\Gamma)$ be a finite set of simple cycles such that $C = \sum_{A \in \mathcal A} A$. Then there exists an alternating sequence 
    $$
    e_1 A_1 e_2A_2 \ldots A_{n-1}e_n,
    $$
    with each $A_i \in \mathcal A$, $e_i, e_{i+1} \in \delta b \cap A_i$, $e_1 = e$ and $e_n = f$. 
\end{lemma}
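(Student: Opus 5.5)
The plan is a parity argument on an auxiliary graph built on the edges of $\delta b$. The basic fact I will use is that every element of $\cyc(\Gamma)$, and in particular every simple cycle, meets the coboundary $\delta b$ of any $b \in \br(\Gamma)$ in an even number of edges. For a simple cycle this holds because it crosses from $b$ to $b^\ast$ as often as it crosses back.

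Define an auxiliary graph $\Theta$ with vertex set $\delta b$. Join two distinct edges $e', e'' \in \delta b$ by an edge of $\Theta$ if there is some $A \in \mathcal A$ with $e', e'' \in A$. A path $e = e_1, e_2, \ldots, e_n = f$ in $\Theta$ is exactly an alternating sequence of the required form: for each $i$ we record a cycle $A_i \in \mathcal A$ witnessing the adjacency of $e_i$ and $e_{i+1}$. So it suffices to show that $e$ and $f$ lie in the same component of $\Theta$.

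Let $K$ be the component of $\Theta$ containing $e$, and suppose for contradiction that $f \notin K$. Let $\mathcal A_K \subset \mathcal A$ consist of those $A \in \mathcal A$ with $A \cap K \neq \emptyset$. By the definition of $\Theta$, every $A \in \mathcal A_K$ satisfies $A \cap \delta b \subset K$. Conversely, every $A \in \mathcal A$ that contains an edge of $K$ lies in $\mathcal A_K$. Consider the $\Z_2$-sum
$$
D := \sum_{A \in \mathcal A_K} (A \cap \delta b).
$$
Then $D$ is supported on $K$. For $g \in K$, the coefficient of $g$ in $D$ equals its coefficient in $\sum_{A \in \mathcal A} A = C$, since only cycles in $\mathcal A_K$ contain $g$. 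Because $C \cap \delta b = \{e, f\}$ and $f \notin K$, this gives $D = \{e\}$, a set of odd size. On the other hand, each summand $A \cap \delta b$ has even size, and a symmetric difference of even sets is even, so $|D|$ is even. This is a contradiction, so $f \in K$.

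One degenerate point needs checking: $e$ must actually lie in some $A \in \mathcal A$. This holds because $e \in C = \sum_{A \in \mathcal A} A$. It is also implicit in the argument above, since $\mathcal A_K$ is nonempty. The case $e = f$ does not arise, because $C$ meets $\delta b$ in two distinct edges. The only step needing care is the bookkeeping that $\mathcal A_K$ captures every cycle touching $K$ and no cycle touching $\delta b \setminus K$; once that is checked the parity contradiction is immediate, and I do not expect a genuine obstacle here.
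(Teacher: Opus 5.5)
Your proof is correct. The component $K$ of $e$ in the auxiliary graph on $\delta b$ absorbs every edge of $\delta b$ lying on a cycle of $\mathcal A$ that meets $K$, so if $f \notin K$ then $\sum_{A \in \mathcal A_K}(A \cap \delta b) = \{e\}$, which is impossible because each simple cycle meets $\delta b$ in an even number of edges. The paper does not reprove this lemma but quotes it from Hamann, and your argument is essentially the standard parity argument used there.
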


We now prove the following. 

\begin{theorem}
    Let $\Gamma$ be a connected, 2-edge-connected $G$-graph. Suppose that $\cyc(\Gamma)$ is topologically $G$-finitely generated. Then $\br(\Gamma)$ is $G$-finitely generated. 
\end{theorem}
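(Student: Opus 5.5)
We reproduce Hamann's strategy for the topological version of the theorem. Fix a finite set $S_0$ such that $S := G\cdot S_0$ topologically generates $\cyc(\Gamma)$. Let $N$ be the maximal number of edges in the support of an element of $S_0$; we may assume each element of $S$ is a sum of simple cycles of length at most $N$, and replace $S$ by this set of simple cycles. The goal is to show that $\br(\Gamma) = \br_n(\Gamma)$ for some $n$ depending only on $N$. Then Corollary~\ref{cor:dicks-dunwoody-gfinite} gives $G$-finite generation, since $\Gamma$ is $G$-finite on edges, which is implicit in Hamann's setting. It suffices to show that every tight $b \in E_m$, with $m$ arbitrary, lies in the ring generated by $E_n$. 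We argue by induction on $m$, following Hamann.

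Fix a tight $b \in E_m \setminus \br_n(\Gamma)$ and two edges $e,f \in \delta b$. Because $b$ is tight, we can join the endpoints of $e$ and $f$ inside $\Gamma[b]$ and inside $\Gamma[b^\ast]$. This produces a simple cycle $C$ with $C \cap \delta b = \{e,f\}$. In Hamann's argument one writes $C = \sum_{A\in\mathcal A} A$ with $\mathcal A \subset S$ finite and applies Lemma~\ref{lem:hamann-2.4}. This is the only step that uses algebraic generation, and it is where we insert Proposition~\ref{prop:top-gen-char}. Take the finite set $U := \delta b \cup E(C)$. By $(\dagger)$, there are $C_1,\dots,C_k\in S$ such that $D := C + \sum_i C_i$ is not supported on $U$, that is, $D$ vanishes on $U$. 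Since $D$ is a finite Eulerian subgraph, it is a finite sum of simple cycles, each of which meets the finite cut $\delta b$ in an even number of edges. We may also assume that no $C_i$ meets $\delta b$ except where needed. The point is that $\sum_i C_i$ agrees with $C$ on $\delta b$, so $\sum_i C_i$ meets $\delta b$ in exactly $\{e,f\}$ modulo 2. The parity and chaining argument of Lemma~\ref{lem:hamann-2.4} then goes through for the multiset $\{C_i\}$ together with the decomposition of $D$, and $D$ contributes nothing on $\delta b$. Concretely, we form the auxiliary graph whose vertices are the edges of $\delta b$, joining two such edges whenever a single cycle among the $C_i$ and the simple cycles of $D$ contains both. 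In this graph $e$ and $f$ are the only odd-degree vertices, so they lie in a common component. Since the cycles of $D$ avoid $\delta b$, the resulting chain $e = e_1, A_1, e_2, \dots, A_{r-1}, e_r = f$ uses only $A_i\in S$, each of length at most $N$.

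Each $A_i$ therefore crosses $\delta b$ at two edges $e_i, e_{i+1}$ that are at distance at most $N$ from each other. We then follow Hamann's remaining argument verbatim. Using Lemma~\ref{lem:hamann-2.3} and the nestedness of the $E_j$, one shows that some element of $E_n$, with $n$ bounded in terms of $N$, coincides with $b$ on a suitable finite set $X$ containing these short cycles. Combining the finite chains from Lemma~\ref{lem:hamann-2.3}, one expresses $b$ in terms of $E_n$ and elements of smaller $E_{m'}$. This gives $b \in \br_n(\Gamma)$, contradicting the choice of $b$. Hence $\br(\Gamma) = \br_n(\Gamma)$.

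The main obstacle is the replacement step. The finitely many cycles $C_i$ obtained from $(\dagger)$ need not sum exactly to $C$, so we must check that the discrepancy $D$ is invisible to every part of Hamann's proof, not only to the chain of Lemma~\ref{lem:hamann-2.4}. The plan is to make $U$ large enough to contain every finite set Hamann inspects: $\delta b$, $C$, and the bounded neighbourhoods used in Lemma~\ref{lem:hamann-2.3}. With that choice, $D$ is supported away from all relevant edges and can be ignored throughout.
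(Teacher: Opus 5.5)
Your replacement step is sound, and it is exactly where the paper inserts Proposition~\ref{prop:top-gen-char}. You apply $(\dagger)$ with $U\supseteq\delta b$, split $D$ into edge-disjoint simple cycles (which then miss $\delta b$), and apply Lemma~\ref{lem:hamann-2.4}, so every cycle in the alternating sequence lies in $S$. Two small corrections. First, your auxiliary-graph parity claim is false as stated: a single cycle meeting $\delta b$ in four edges gives each of them degree three. The correct parity count is in the bipartite incidence graph between $\delta b$ and the cycles, or you can simply cite Lemma~\ref{lem:hamann-2.4}. Second, your ``main obstacle'' is not one, because the decomposition is only ever fed into Lemma~\ref{lem:hamann-2.4}.

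The genuine gap is everything after the alternating sequence. This is neither Hamann's argument nor an argument at all. Your target, $\br(\Gamma)=\br_n(\Gamma)$ with $n$ depending only on the maximal cycle length, is false. Let $\Gamma$ be the grid on $\Z\times\{0,\dots,k\}$, with $G=\Z$ translating horizontally. Squares generate $\cyc(\Gamma)$. However, the $k+1$ rows are edge-disjoint bi-infinite paths, so every cut separating the two ends has at least $k+1$ edges. Hence $\br_k(\Gamma)$ consists only of finite and cofinite sets, and the half-strip $\{(x,y):x\le 0\}$ is not in it. So no local reasoning of the type ``some low-coboundary element of $E_n$ agrees with $b$ on the short cycles $A_i$'' can put $b$ into $\br_n(\Gamma)$. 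Your sentence about combining the chains of Lemma~\ref{lem:hamann-2.3} to express $b$ via $E_n$ and smaller $E_{m'}$ has no mechanism behind it, and the proof is not an induction on $m$.

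The missing idea is to use that $S$ has finitely many $G$-orbits, not merely bounded cycle length, through a pigeonhole on orbits. One shows that each $E_n$ has at most $2^{1+\ell}$ times (the number of $G$-orbits of $S$) orbits, where $\ell$ is the maximal cycle length. The argument runs as follows.
\begin{enumerate}
\item If the bound fails, some $b_0\in E_n$ is neither minimal nor maximal in any chain $E_{b_0,V(C)}$ with $C\in S$. Let $b_1$ be its immediate predecessor in one such chain.
\item Claim~\ref{claim:hamann-claim} shows that the property ``$b_1$ coincides with $b_0$ on $V(C)$ and is its immediate predecessor in $E_{b_0,V(C)}$'' passes between cycles of $S$ sharing an edge of $\delta b_0$. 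Both predecessors equal the one in the chain for the endpoints of the shared edge.
\item Running this along your alternating sequence from $e$ to an arbitrary $f\in\delta b_0$ gives $\delta b_0\subseteq\delta b_1$. This is impossible for tight $b_1\subsetneq b_0$: connectivity of $\Gamma[b_1^\ast]$ forces an edge of $\delta b_0$ that is not in $\delta b_1$.
\end{enumerate}
Hence the increasing $G$-invariant sets $E_n$ stabilise, and their union is a $G$-finite generating set. This also avoids your appeal to Corollary~\ref{cor:dicks-dunwoody-gfinite}, which needs finitely many edge orbits, a hypothesis not in the statement.
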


\begin{proof}
    Fix a topological $G$-finite generating set $S \subset \cyc (\Gamma)$. Let $\ell \in \N$ be the maximal number of edges supported by any cycle in $S$, and let $N$ denote the number of $G$-orbits of elements of $S$.  
    We will see that every $ E _n$ contains at most 
    $2^{1+\ell} N$ orbits, 
    and thus the chain of the $ E _n$ is eventually constant. 
    
    To this end, fix $n \in \N$. Let $C$ be any simple cycle. Let $X \subset V(C)$ be any non-empty, proper subset of $V(C)$. Write $Y = V(C) \setminus X$. Consider all those $b \in  E _n$ such that both $X = b \cap V(C)$ and $Y = b^\ast \cap V(C)$. By Lemma~\ref{lem:hamann-2.3}, these form a finite chain under inclusion, and so there are unique $\subset$-minimals and $\subset$-maximal elements. Letting $X$ vary, we see that there are at most $2^{|V(C)|}$ elements of $ E _n$ which induce a non-trivial bipartition of $V(C)$ and are maximal (or minimal) in the finite chain of $b \in  E _n$ which induce the same partition. If $ E _n$ contains strictly more than $2^{1+\ell} N$ orbits, we see that there must exist some $b_0 \in  E _n$ such that, 
    for any $C \in  S $, if $b$ partitions $V(C)$ non-trivially then it is neither the minimal nor maximal element of the chain $ E _{b,V(C)}$. Fix $b_0 \in  E _n$ with this property for the rest of this proof.

    \begin{claim}\label{claim:hamann-claim}
        Let $C_1, C_2 \in  S $. Let $b_1 \in  E _n$, and take $b_0$ as fixed above. Suppose that:
        \begin{enumerate}
            \item $C_1$ and $C_2$ share a common edge in $\delta b_0$, 

            \item $b_0$ and $b_1$ coincide on $V(C_1)$.

            \item $b_1$ is the immediate predecessor to $b_0$ in the finite chain $ E _{b_0,V(C_1)}$.
        \end{enumerate} 
        Then $b_0$ and $b_1$ coincide on $V(C_2)$, and $b_1$ is the immediate predecessor to $b_0$ in $ E _{b_0,V(C_2)}$. 
    \end{claim}

    \begin{proof}
        The hypotheses imply that there exists $e \in \delta b_0 \cap \delta b_1 \cap C_1$. Clearly then, both $b_0$ and $b_1$ partition each of $V(C_1)$ and $V(C_2)$ non-trivially. Note that any $b \in  E _n$ which satisfies $b_1 \subsetneq b \subsetneq b_0$ coincide with $b_0$ on $V(C_1)$. This contradicts the assumption that $b_1$ is the immediate predecessor to $b_0$ in $ E _{b_0,V(C_1)}$, and so no such $b$ exists. Thus, we need only show that $b_1$ and $b_0$ coincide on $V(C_2)$. 

        Let $u$, $v$ be the endpoints of $e$.
        Let $X = \{u,v\}$, so $b_0$ and $b_1$ coincide on $X$ and partition $X$ non-trivially. By Lemma~\ref{lem:hamann-2.3}, the set $ E _{b_0,X}$ is also a finite chain containing $b_0$ and $b_1$, and $b_1$ is necessarily the immediate predecessor of $b_0$. 
        Consider now $ E _{b_0,V(C_2)}$. By construction, $b_0$ has an immediate predecessor in this chain, say $b_2$. By symmetrical reasoning to earlier, $b_2$ must also be the immediate predecessor to  $b_0$ in $ E _{b_0,X}$. This implies that $b_2 = b_1$, and thus $b_1$ and $b_0$ coincide on $V(C_2)$. 
    \end{proof}

    We now inductively apply the above claim. 
    Let $D$ be any cycle which is partitioned non-trivially by $b_0$. This exists since $|\delta b_0| > 1$ and $b_0$ is tight. Let $b_1$ be the immediate predecessor of $b_0$ in $ E _{b_0,V(D)}$. Thus, there is some edge $e \in D \cap \delta b_0 \cap \delta b_1$. Let $f \in \delta b_0$ be any arbitrary and distinct from $e$. We will see that $f \in \delta b_1$. 
    
    Since $b_0$ is tight and $\delta b_0$ contains at least two edges, there exists a simple cycle $C$ such that $C$ contains exactly two edges from $\delta b_0$, and these two edges are exactly $e$ and $f$. We now apply Proposition~\ref{prop:top-gen-char}, and write $C$ as a sum $C = \sum_j D_j$ such that each $D_j$ is either an element of $S$ or is disjoint from $\delta b_0$.  
    We apply Lemma~\ref{lem:hamann-2.4} to this decomposition of $C$, and see that there exists an alternating sequence 
    $$
    e_1 C_1 e_2C_2e_3 \ldots C_{m-1}e_m,
    $$
    with each $C_i \in  S $, $e_i, e_{i+1} \in \delta b_0 \cap C_i$, $e_1 = e$ and $e_m = f$. We may assume without loss of generality that $C_1 = C$ by inserting $eC$ to the beginning of our sequence otherwise.
    By Claim~\ref{claim:hamann-claim}, we have that $b_1$ and $b_0$ also coincide on $C_2$. Inductively, we see that $b_1$ and $b_0$ coincide on all the $C_i$. Since $f$ lies on $C_{m-1}$, we deduce that $f \in \delta b_1$. Since $f$ was arbitrary, $\delta b_1 \supset \delta b_0$. But $b_0 \supset b_1$, and so we must have that $b_1 = b_0$. This contradicts the assumption that $b_0$ and $b_1$ are distinct. 
    
    Thus, the $ E _n$ eventually stabilise. Since each $ E _n$ is $G$-finite and $\bigcup_n  E _n$ generates $\br(\Gamma)$, it follows that $\br(\Gamma)$ is a finitely generated $G$-module. 
\end{proof}

As an immediate corollary, by combining this theorem with the discussion given in Remark~\ref{rem:derivations} we recover the following extension of Dunwoody's accessibility theorem \cite{dunwoody1985accessibility}, due to Groves and Swarup \cite[Thm.~1.2]{groves1991remarks}.

\begin{corollary}[Groves--Swarup]
    Let $G$ be a finitely generated group which is finitely defined by derivations over $\Z_2 $. Then $G$ is accessible.
\end{corollary}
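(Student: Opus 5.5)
The plan is to reduce the corollary to the topological version of Hamann's theorem proved above, and then to Theorem~\ref{thm:alg-vs-graph-relacc} with an empty peripheral collection.

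If $G$ is finite it is trivially accessible, so assume $G$ is infinite. By Remark~\ref{rem:derivations}, being finitely defined by derivations over $\Z_2$ means the following. For some (equivalently, any) locally finite Cayley graph $\Gamma$ of $G$, the cycle space $\cyc(\Gamma)$ is topologically $G$-finitely generated. Equivalently, there is a free, cocompact action on a simplicial $2$-complex $K$ for which $H_1(K;\Z_2)\to H_1^\infty(K;\Z_2)$ vanishes, and we may take $\Gamma$ to be the $1$-skeleton of $K$ after subdividing, or a Cayley graph quasi-isometric to it.

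I expect the main obstacle to be checking that this property is independent of the chosen Cayley graph. Since the paper leaves the Remark as ``not too hard'', I would verify it directly. Fix generating sets $S\subset S'$. Every edge of $\Gamma_{S'}$ is a sum of a path in $\Gamma_S$ and a triangle or polygon of bounded length. Hence a $G$-finite topological generating set for $\cyc(\Gamma_S)$, together with the $G$-orbits of these finitely many bounded polygons, satisfies condition $(\dagger)$ of Proposition~\ref{prop:top-gen-char} for $\Gamma_{S'}$. The converse direction is similar, by projecting paths in $\Gamma_{S'}$ to $\Gamma_S$.

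Next I would arrange the $2$-edge-connectivity required by Theorem~\ref{thm:hamann-top}. Choose the generating set $S'=S\cup S^2$ with $S$ symmetric; enlarging the generating set this way is permitted by the previous step. Then every edge $\{g,gs\}$ with $s\in S$ lies in a triangle $g,\,gs,\,gst$, where $t\in S$ is chosen with $st\notin\{1\}$ and $t\ne 1$. Such a $t$ exists because $G$ is infinite, so $|S|\ge 2$ or $S=\{s,s^{-1}\}$ with $s$ of infinite order. Every edge labelled by a generator in $S^2$ lies in a triangle by construction. So every edge of $\Gamma_{S'}$ lies on a cycle, which gives $2$-edge-connectivity. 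Alternatively, I would follow the remark in the proof of Theorem~\ref{thm:rel-cyc-accessibility} that only $G$-finitely many single-edge cuts occur.

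Applying Theorem~\ref{thm:hamann-top} then shows that $\br(\Gamma)$ is a finitely generated $G$-module. Take $\mathcal P=\emptyset$, so that $\per=\emptyset$. Then every cut is $\per$-elliptic and $\br_\per(\Gamma)=\br(\Gamma)$. By Theorem~\ref{thm:fg-char}, $\Gamma$ is accessible relative to $\per$. By Theorem~\ref{thm:alg-vs-graph-relacc}, $G$ is accessible relative to $\emptyset$, which means it has a terminal finite splitting. That is exactly Wall accessibility. One could instead invoke Thomassen--Woess directly at this last step.
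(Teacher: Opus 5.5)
Your proposal is correct and follows essentially the same route as the paper. The paper combines Remark~\ref{rem:derivations} (finite definition by derivations over $\Z_2$ gives topological $G$-finite generation of the cycle space of a Cayley graph) with the topological version of Hamann's theorem, and then passes from $G$-finite generation of $\br(\Gamma)$ to accessibility via Thomassen--Woess, or equivalently via Theorems~\ref{thm:fg-char} and~\ref{thm:alg-vs-graph-relacc} with $\mathcal P=\emptyset$; your extra checks (independence of the generating set, and $2$-edge-connectivity via $S\cup S^2$) correctly fill in details the paper leaves implicit.
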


\end{document}